\DeclareFontFamily{U}{matha}{\hyphenchar\font45}
\DeclareFontShape{U}{matha}{m}{n}{
  <5> <6> <7> <8> <9> <10> gen * matha
  <10.95> matha10 <12> <14.4> <17.28> <20.74> <24.88> matha12
  }{}
\DeclareSymbolFont{matha}{U}{matha}{m}{n}
\DeclareFontFamily{U}{mathx}{\hyphenchar\font45}
\DeclareFontShape{U}{mathx}{m}{n}{
  <5> <6> <7> <8> <9> <10>
  <10.95> <12> <14.4> <17.28> <20.74> <24.88>
  mathx10
  }{}
\DeclareSymbolFont{mathx}{U}{mathx}{m}{n}
\DeclareMathSymbol{\obot}         {2}{matha}{"6B}
\DeclareMathSymbol{\bigobot}       {1}{mathx}{"CB}
\numberwithin{equation}{section}\usepackage{amsthm}
\theoremstyle{plain}
\newtheorem{proposition}{Proposition}[subsection]
\newtheorem{conj}[proposition]{Conjecture}
\newtheorem{cor}[proposition]{Corollary}
\newtheorem{lem}[proposition]{Lemma}
\newtheorem{thm}[proposition]{Theorem}
\newtheorem{prop}[proposition]{Proposition}
\theoremstyle{definition}
\newtheorem{defn}[proposition]{Definition}
\newtheorem{eg}[proposition]{Example}
\theoremstyle{remark}
\newtheorem{rmk}[proposition]{Remark}
\crefname{proposition}{proposition}{propositions}
\crefname{conj}{conjecture}{conjectures}
\crefname{cor}{corollary}{corollaries}
\crefname{lem}{lemma}{lemmas}
\crefname{thm}{theorem}{theorems}
\crefname{prop}{proposition}{propositions}
\crefname{expect}{expectation}{expectations}
\crefname{problem}{problem}{problems}
\crefname{defn}{definition}{definitions}
\crefname{construction}{construction}{constructions}
\crefname{notation}{notation}{notations}
\crefname{eg}{example}{examples}
\crefname{asmp}{assumption}{assumptions}
\crefname{hypo}{hypothesis}{hypotheses}
\crefname{rmk}{remark}{remarks}
\numberwithin{equation}{section}
  \newcommand{\BA}{{\mathbb {A}}} 
    \newcommand{\BC}{{\mathbb {C}}} 
     \newcommand{\BF}{{\mathbb {F}}}
     \newcommand{\BH}{{\mathbb {H}}}
     \newcommand{\BL}{{\mathbb {L}}}
     \newcommand{\BP}{{\mathbb {P}}}
    \newcommand{\BQ}{{\mathbb {Q}}} \newcommand{\BR}{{\mathbb {R}}}
     \newcommand{\BZ}{{\mathbb {Z}}}
     \newcommand{\cB}{{\mathcal {B}}}
    \newcommand{\cK}{{\mathcal {K}}} 
    \newcommand{\cM}{{\mathcal {M}}}
     \newcommand{\cX}{{\mathcal {X}}}
    \newcommand{\RM}{{\mathrm {M}}}
    \newcommand{\fc}{{\mathfrak{c}}}
    \newcommand{\fm}{{\mathfrak{m}}} 
     \newcommand{\fp}{{\mathfrak{p}}}
    \newcommand{\wt}{\widetilde}\newcommand{\ol}{\overline}
    \newcommand{\wh}{\widehat}
    \newcommand{\pair}[1]{\langle {#1} \rangle}
    \newcommand{\incl}{\hookrightarrow}
    \newcommand{\bsl}{\backslash}
  \newcommand{\ep}{\epsilon}
  \newcommand{\vpl}{\varprojlim}
 \newcommand{\lb}{\left(} \newcommand{\rb}{\right)}
    \newcommand{\etale}{\'{e}tale~}
    \newcommand{\Ch}{{\mathrm{Ch}}}
    \newcommand{\coker}{{\mathrm{coker}}}
    \newcommand{\End}{{\mathrm{End}}}
    \newcommand{\Gal}{{\mathrm{Gal}}} \newcommand{\GL}{{\mathrm{GL}}}
    \newcommand{\Hom}{{\mathrm{Hom}}}
    \newcommand{\id}{{\mathrm{id}}}
        \newcommand{\DC}{{\mathrm{DC}}}
    \newcommand{\Mor}{{\mathrm{Mor}}}
    \newcommand{\PGL}{{\mathrm{PGL}}} \newcommand{\Pic}{\mathrm{Pic}}
    \newcommand{\PSL}{{\mathrm{PSL}}}
\newcommand{\rf}{{\mathrm{f}}}
    \renewcommand{\mod}{\mathrm{mod}\ }
         \newcommand{\SL}{{\mathrm{SL}}}
    \newcommand{\Sym}{{\mathrm{Sym}}}
    \newcommand{\Stab}{{\mathrm{Stab}_e}}
     \newcommand{\wStab}{{\wt{\mathrm{Stab}}_e}}
 \newcommand{\Corr}{{\mathrm{Corr}}}
        \newcommand{\oll}{{\Omega}}
                     \newcommand{\aft}{{{\BA_\rf^\times}}}
\newcommand\supervisor[1]{\def\@supervisor{#1}}
\newcounter{elno}
\renewcommand{\cong}{\simeq}
 \author{Congling Qiu}
\begin{document}

  \title{Faber--Pandharipande    cycle, real multiplication and torsion points}
\begin{abstract}
A result of Green and Griffiths states that for the generic curve $C$ over $\BC$
 of genus $g \geq 4$ with a canonical divisor $K$, its Faber--Pandharipande 0-cycle $K\times K-(2g-2)K_\Delta$ on $C\times C$ is nontorsion in the Chow group of  rational equivalence classes.
 However, according to a conjecture of Beilinson   and Bloch,   this   Chow cycle  vanishes if the curve is defined over a number field.
   We give a proof of this prediction  for Shimura curves which have real multiplication.
Our method also  works for  some other classes curves with partial real multiplication. 
 We also draw   a connection between the Faber--Pandharipande   0-cycles
and torsion points on curves under the Abel--Jacobi map. 
 
 \end{abstract} 
\maketitle 
 \tableofcontents  
 
   \section{Introduction} 
 \subsection{Faber--Pandharipande    cycle}\label{1.1}

    Let $X$ be a geometrically   connected smooth projective
  curve of genus $g$ over a field $k$ and  $K$ a canonical divisor. Let  $K_\Delta$ be 
  the divisor $K$ on the diagonal $\Delta\subset X^2=X \times X$.
  The Faber--Pandharipande   0-cycle \cite{GG,Yin}
  $$Z_K:=K\times K-(2g-2)K_\Delta$$ 
   is of degree 0 and lies in the kernel of the Albanese map.
   Faber and Pandharipande showed that $Z_K=0$ is  rationally equivalent to 0 if $g\leq 3$.
On the other hand,
   if $k$ is  algebraically closed,  
   for the generic curve $X$ of genus $g \geq  4$, $Z_K$ is non-torsion  in the Chow group, by Green--Griffiths \cite{GG} for $k=\BC$ and by Yin \cite{Yin} in general.

  If $k$  is a number field,  according to a conjecture of Beilinson \cite[5.1, 5.6]{Bei}  and Bloch \cite[p xiv]{Blo}, 
   $Z_K$ is rationally equivalent to 0.  
  We give a proof of this prediction  for Shimura curves which have real multiplication.
Our method also  works for other curves which have partial real multiplication, such as  cyclic unramified coverings of genus $2$ curves of odd prime degrees, whose Prym varieties have real multiplication. 
 We also draw  a connection between the Faber--Pandharipande   0-cycles
and torsion points on curves under the Abel--Jacobi map.

 \subsection{Injectivity of    Albanese map} \label{1.2}
  For a smooth projective  variety $V$ over   $k$,   let $\Ch^i(V)$ be the Chow group  of $V$   of  codimension $i$ cycles with   $\BQ$-coefficients, modulo rational equivalence in this introduction.  
  Since   $Z_K$ 
   is of degree 0 and lies in the kernel of the Albanese map, by the    theorem of  Rojtman \cite{Roit} on torsion  0-cycles,  $Z_K$ is rationally equivalent to 0 if and  only if $Z_K$  vanishes in $\Ch^2(X^2)$. 
 Let us reformulate this vanishing as follows.

Fix     a divisor class $e\in \Ch^1(X) $ of degree 1, and
let 
\begin{equation*}
 \wh e= e\times [X] +[X]\times e,\quad \delta=   [\Delta]-\wh e\ \in\Ch^1(X^2).
 \end{equation*} 
Here and below, for an algebraic cycle $Z$, let  ``$[Z]$" denote taking  the   class   in the corresponding Chow group. Then  $[Z_K]$  
is multiple of  $(\delta\times \delta)_* \lb  [\Delta]\cdot  \wh e\rb$ 
for $e=[K]/\deg K$ (assuming $g\geq 2$). 
%
Moreover,  
 the injectivity of the   Albanese map for 0-cycles on  $X^2$  
 is equivalent to
 that  
  \begin{equation*} \label{eq:1.2}
 (\delta\times \delta)_*\Ch^2(X^2) =0 
  \end{equation*} 
   Alert reader may already notice that 
   the action of the square of $\delta $ on $\Ch^2(X^2) $ is the same with 
the square of      $[\Delta]-[X]\times e$. We use $\delta$ for later convenience.    
 
 We focus on the  subspace $$ (\delta\times \delta)_* \lb  \Ch^1(X^2)\cdot  \wh e\rb   $$ of 0-cycles, which we shall  call    generalized Faber--Pandharipande type 0-cycles with base $e$.

   \subsection{Shimura curves} \label{Shimura curves} 
   
The difficulty in the study of  Bloch--Beilinson    conjecture, which appears to be geometric,    is  to correctly use the condition  of being over a           number fields.  
  Belyi's theorem says that  any connected smooth projective curve  over   a number field can be realized as
  as the compactified quotient of the complex upper half plane $\BH$ by 
        some arithmetic subgroup of $\PSL_2(\BR)$. 
We focus on some congruence  arithmetic  subgroups.

 Let  $F$ be a  totally real number field.
 An infinite (or archimedean) place of $F$ is an embedding $F\incl \BR$.
Let  $B$  be a quaternion algebra over   $F$.  
   Assume that   the base change $B_\tau$ of $B$  is
  the matrix algebra at    one infinite place $\tau $ of $F$ and division at all other infinite places of $F$.  Then 
  \begin{equation}\label{psl}
   B_\tau^+/\BR^\times\cong \PSL_2(\BR),
  \end{equation} 
  where $B_\tau^+$ consists of elements of positive norm (i.e., determinant) in the matrix algebra $B_\tau$. 
 Let $B^+ = B\cap B_\tau^+$ and  $PB^+ = B^+/F^\times$. Identify $PB^+ $ as a subgroup of $\PSL_2(\BR)$.
An order   $O$ of   $B$ 
 defines an integral structure on $PB^\times = B^\times/F^\times$.
 Let $\Gamma_O\subset PB^+$ be the subgroup of integral elements. See \Cref{conncomp} for a group theoretical definition. 
 It could be larger than the image of $
 O\cap B^+$ in $PB^+$.
A  subgroup $\Gamma$ of $PB^+$ is called a
  congruence   subgroup  
if there is an order   $O$ of   $B$ such that $\Gamma_O$ is  a finite index subgroup of $\Gamma$. 
Assume that $  \Gamma$ is torsion-free and cocompact. Then
 $\Gamma\bsl \BH$  defines a  smooth projective  curve  $X_\Gamma$  over $\BC$, which is a Shimura curve over $\BC$.

 For example,   
if  $F=\BQ(\zeta_7+\zeta_7^{-1})$ and the base change  of $B$ is  the matrix algebra at
at all finite  (or archimedean)  places, then all maximal orders of $B$ are conjugate.
If $O$ is a maximal order, 
Shimura \cite{Shi} showed  
$\Gamma_O $
is the  $(2,3,7)$-triangle group in $\PSL_2(\BR)$, which is cocompact.
So its quotients by torsion-free normal subgroups   are exactly the Hurwitz groups.  
The ones coming from prime level principal
congruence   subgroups  of $PB^+$
include the  infinite series of Hurwitz groups of the form $\PSL_2(\BF_q)$ of Macbeath \cite{Mac}.  
See \Cref{conncomp} for more details.
Hurwitz curves defined by    torsion free  congruence   subgroups  of $\Gamma_O $ are called congruence Hurwitz curves.

By the theory of Shimura varieties,  $X_\Gamma$   is canonically the geometrically connected component of a 
Shimura curve $S_\Gamma$ over $F$ (\Cref{conncomp}).

  \begin{thm}\label{thm:1}  Assume that $  \Gamma$ is torsion-free and cocompact.
 Let  $X=X_\Gamma$    of genus $g\geq 2$ and $e=[K]/\deg K$. 
  Let $ \Ch_F^1(X^2)$ be the image of  $\Ch^1(S_\Gamma^2)$ in $ \Ch^1(X^2)$ by restriction.
 Then
 $$ (\delta\times \delta)_* \lb  \Ch_F^1(X^2)\cdot  \wh e\rb  =0. $$

 \end{thm}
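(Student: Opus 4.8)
The plan is to turn the assertion into the vanishing of an explicit family of $0$-cycles in the Albanese kernel of $X^2$, to split that family along the automorphic decomposition of $X_\Gamma$, and to annihilate each piece with the Hecke action furnished by the real multiplication. \emph{Step 1 (reduction to the Albanese kernel).} One checks directly that $\delta$ is the middle Chow--Künneth projector attached to $e$: $\delta\circ\delta=\delta$, the operator $\delta_*$ kills $\Ch^0(X)$, and $\delta_*f=f-(\deg f)\,e$ on $\Ch^1(X)$. Hence $(\delta\times\delta)_*$ is, on $\Ch^2(X^2)$, a projector onto the Albanese kernel $T(X^2)$ (the $0$-cycles of degree $0$ which are moreover Albanese-trivial). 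Fix an $F$-rational canonical divisor $K=\sum_i m_iP_i$ and set $b_i=[P_i]-e\in\Pic^0(X)_\BQ$, so $\sum_i m_ib_i=0$. Computing $(\delta\times\delta)_*$ separately on $[D]\cdot p_1^*e$ and on $[D]\cdot p_2^*e$, by restricting $D$ to the fibres over $e$ and applying the formula for $\delta_*$, yields
\[
(\delta\times\delta)_*\bigl([D]\cdot\wh e\bigr)\;=\;\frac{1}{\deg K}\sum_i m_i\bigl(b_i\times\phi(b_i)+\phi^{\dagger}(b_i)\times b_i\bigr)\qquad\text{in }\Ch^2(X^2),
\]
where $a\times b:=p_1^*a\cdot p_2^*b$, the element $\phi\in\End(\Jac X)_\BQ$ is the endomorphism induced by the correspondence $D$, and $\phi^{\dagger}$ is its Rosati conjugate. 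Thus the left-hand side depends on $D$ only through $\phi$; for $D=\Delta$ it is a nonzero multiple of $[Z_K]$; and $\phi\mapsto(\delta\times\delta)_*([D]\cdot\wh e)$ is a well-defined $\BQ$-linear map $\Psi\colon\End(\Jac X)_\BQ\to T(X^2)$. If $D$ is the restriction of a cycle on $S_\Gamma^2$ then $\phi\in\End_F(\Jac S_\Gamma)_\BQ$ and $\kappa_0:=\tfrac1{\deg K}\sum_i m_ib_i\otimes b_i\in\Sym^2\Pic^0(X)_\BQ$ is defined over $F$, so everything takes place in $T(S_\Gamma^2)$; the theorem is therefore equivalent to the vanishing of $\Psi$ on $\End_F(\Jac S_\Gamma)_\BQ$ there, and, via $\phi=\id$ (i.e.\ $D=\Delta$), contains the Beilinson--Bloch prediction for $Z_K$.

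\emph{Step 2 (automorphic decomposition).} By Eichler--Shimura together with the Jacquet--Langlands correspondence, $\Jac S_\Gamma$ is isogenous over $F$ to a product of Hecke-isotypic abelian varieties $A_\pi$, each carrying multiplication over $F$ by the totally real Hecke field $E_\pi$ with $[E_\pi:\BQ]=\dim A_\pi$ --- this is precisely the real multiplication of the hypothesis. Accordingly $\Pic^0(S_\Gamma)_\BQ$, the class $\kappa_0$, the endomorphism $\phi$ (and its conjugate $\phi^\dagger$), and the map $\Psi$ all split along the bi-isotypic components $(\pi,\pi')$, and it suffices to treat each component separately; on a $(\pi,\pi)$-component the ambient correspondences are $E_\pi$-linear, which is the structure one exploits next.

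\emph{Step 3, and the main obstacle.} On each component one plays off the Hecke correspondences $T_\fp$ for $\fp$ prime to the level: these are defined over $F$, they act on the $h^1$-piece $M_\pi$ of $h^1(S_\Gamma)$ through the eigenvalue $a_{\fp,\pi}\in E_\pi$, and they act on the ``trivial'' Tate pieces $\BQ(0)$ and $\BQ(-1)$ of $h^0$, $h^2$ through the scalar $q_\fp+1$, where $q_\fp$ is the cardinality of the residue field. The Hasse--Weil/Ramanujan bound $|a_{\fp,\pi}|\le 2\sqrt{q_\fp}<q_\fp+1$ at every archimedean place of $E_\pi$ makes $T_\fp-(q_\fp+1)$ invertible on $M_\pi$; using the elementary functoriality $(\Gamma_1\times\Gamma_2)_*(a\times b)=\Gamma_{1*}a\times\Gamma_{2*}b$ one promotes this to an operator which is invertible on the subspace of $T(S_\Gamma^2)$ where $\Psi(\phi)$ lives while simultaneously annihilating $\Psi(\phi)$ (the latter being forced by the $(q_\fp+1)^2$-eigenvalue along the ``decomposable'' directions picked out by $\wh e$), whence $\Psi(\phi)=0$. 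The delicate point --- and the place where working over a number field is indispensable --- is to convert this ``eigenvalue gap'' into the vanishing of the $0$-cycle class itself rather than merely of its image in cohomology: for this one reduces to $\BQ$-coefficients by Rojtman's theorem (as already used for $Z_K$ in \S\ref{1.2}) and then uses that over a number field the relevant quotient of $T(S_\Gamma^2)$ is cut out in a way compatible with the Hecke action. The borderline case $\phi=\id$ --- literally the cycle $Z_K$ --- is then handled through the torsion-point reformulation developed later in the paper: $Z_K$ vanishes once the classes $b_i$ span a single $E_\pi$-line inside $\Pic^0(X)_\BQ$, and the real multiplication on $X_\Gamma$ imposes exactly such a constraint. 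Carrying out Step 3 rigorously is the heart of the argument.
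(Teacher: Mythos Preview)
Your Steps 1 and 2 are broadly compatible with the paper: one does reduce to the Albanese kernel, and one does decompose along the isotypic pieces $A_\pi$ with totally real $E_\pi$ of degree $\dim A_\pi$. But Step 3 has a genuine gap, and the mechanism you propose is not the one that works.

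The ``eigenvalue gap'' argument you sketch conflates homological and rational equivalence. Yes, $T_\fp-(q_\fp+1)$ acts invertibly on the cohomology of $M_\pi$; but you need invertibility on the relevant piece of $\Ch^2$, and that is precisely the Beilinson--Bloch-type statement you are trying to prove. Your sentence ``uses that over a number field the relevant quotient of $T(S_\Gamma^2)$ is cut out in a way compatible with the Hecke action'' is not an argument: there is no known general principle that lifts cohomological invertibility of a correspondence to Chow-level invertibility, and the cycles in the Albanese kernel are invisible to every classical cohomology. Likewise, the ``$(q_\fp+1)^2$-eigenvalue along the decomposable directions picked out by $\wh e$'' does not help once $(\delta\times\delta)_*$ has been applied, because you have already projected away from those directions. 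Finally, the appeal to ``the torsion-point reformulation developed later in the paper'' for $\phi=\id$ is a misreading: that section shows that the Shimura-curve examples are \emph{not} $\BQ$-subcanonical, precisely to rule out the cheap reason for $Z_K=0$; it is not an input to the proof of the theorem.

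What the paper actually does in place of your Step 3 is to invoke Kimura--O'Sullivan finite-dimensionality. After choosing, on each $(A,\iota)$-block with $\iota^\dag=\iota$, a basis so that the rank-one projectors $\delta_\phi$ satisfy $\delta_\phi^t=\delta_\phi$, one knows $\dim H^*(X,\delta_\phi)=2$. Then $\Sym^2(X,\delta_\phi)$ is a finite-dimensional effective motive with one-dimensional cohomology, hence (by Kimura's results) isomorphic to a Tate twist of the unit; in particular its $\Ch^2$ carries no class beyond the homologically forced one, and $(\delta_\phi\times\delta_\phi)_*\bigl([\Delta]\cdot\wh e\bigr)=0$. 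The cross terms vanish by orthogonality of the projectors, and the passage between ``multiply by $\wh e$ then push by Hecke'' and ``push by Hecke then multiply by $\wh e$'' is justified by the $e$-\emph{stabilizing} property of Hecke correspondences (they preserve $e$), which is another point your sketch glosses over. The number-field hypothesis is not used at all in this step; it enters only to make the statement interesting, since for generic complex curves $Z_K\neq 0$.
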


 As we explained in \Cref{1.2}, we apply  the theorem to $[\Delta] \in \Ch^1(X_\Gamma^2)$ so that 
 by the    theorem of  Rojtman \cite{Roit} on torsion  0-cycles, we have the following result.
  \begin{cor}\label{cor:1}Assume that $  \Gamma$ is torsion-free and cocompact.
Let  $X=X_\Gamma$    of genus $g\geq 2$.
 Then  the Faber--Pandharipande   0-cycle   $Z_K $  with integral coefficients
  is rationally equivalent to 0.

 \end{cor}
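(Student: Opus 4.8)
\textbf{Proof proposal for Corollary \ref{cor:1}.}
The plan is to deduce the corollary directly from Theorem \ref{thm:1} together with the reformulation set up in \Cref{1.2}. First I would recall that $X=X_\Gamma$ is, by the theory of Shimura varieties, the geometrically connected component of the Shimura curve $S_\Gamma$ defined over the totally real field $F$; in particular $X$, $X^2$, the diagonal $\Delta$, and a canonical divisor $K$ all descend to $F$ (or at worst to a number field, which is all we need). So $X^2$ is a smooth projective surface over a number field, and the 0-cycle $Z_K=K\times K-(2g-2)K_\Delta$ is defined over that number field. Since $Z_K$ has degree $0$ and lies in the kernel of the Albanese map (as noted in \Cref{1.1}), Rojtman's theorem \cite{Roit} applies: the torsion subgroup of the Albanese kernel injects into the Albanese variety, so a torsion class in the Albanese kernel that dies in $\Ch^2(X^2)_{\BQ}$ is already rationally equivalent to $0$ with integral coefficients. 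Hence it suffices to show $[Z_K]=0$ in $\Ch^2(X^2)$ (rational coefficients).

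Next I would identify $[Z_K]$ inside the space of generalized Faber--Pandharipande cycles. Taking $e=[K]/\deg K=[K]/(2g-2)$ (legitimate since $g\geq 2$), the excerpt in \Cref{1.2} records that $[Z_K]$ is a nonzero rational multiple of $(\delta\times\delta)_*\bigl([\Delta]\cdot\wh e\bigr)$, where $\wh e=e\times[X]+[X]\times e$ and $\delta=[\Delta]-\wh e$. The one point to check carefully is that the class $[\Delta]\in\Ch^1(X^2)$ lies in $\Ch_F^1(X^2)$, the image of $\Ch^1(S_\Gamma^2)$ under restriction. This is clear because $S_\Gamma$ is defined over $F$, its diagonal $\Delta_{S_\Gamma}\subset S_\Gamma^2$ is an $F$-rational divisor, and its restriction to the connected component $X^2\subset S_{\Gamma,\ol F}^2$ is exactly $[\Delta]$; similarly any fixed degree-one $e=[K]/(2g-2)$ comes from an $F$-rational canonical class on $S_\Gamma$, so $\wh e$ is a restriction of an $F$-rational class as well.

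Now I would simply invoke Theorem \ref{thm:1}: since $[\Delta]\in\Ch_F^1(X^2)$, the class $(\delta\times\delta)_*\bigl([\Delta]\cdot\wh e\bigr)$ lies in $(\delta\times\delta)_*\bigl(\Ch_F^1(X^2)\cdot\wh e\bigr)$, which vanishes. Therefore $[Z_K]=0$ in $\Ch^2(X^2)$, and by the Rojtman argument of the previous paragraph the integral Faber--Pandharipande cycle $Z_K$ is rationally equivalent to $0$. The main (and essentially only) obstacle is the bookkeeping of descent in the second paragraph: making sure that the specific class $[\Delta]$, and the chosen base $e$, genuinely come from $S_\Gamma^2$ over $F$ so that Theorem \ref{thm:1} is applicable; everything else is a formal consequence of the already-cited results of Rojtman and the identity for $[Z_K]$ recorded in \Cref{1.2}.
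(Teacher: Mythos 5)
Your proposal is correct and follows exactly the route the paper intends: apply \Cref{thm:1} to the class $[\Delta]\in\Ch_F^1(X^2)$ (which descends to $S_\Gamma^2$ over $F$, as does $e=[K]/(2g-2)$), identify $[Z_K]$ as a multiple of $(\delta\times\delta)_*([\Delta]\cdot\wh e)$ per \Cref{1.2}, and upgrade the rational vanishing to integral coefficients via Rojtman's theorem. The extra care you take with the descent bookkeeping is welcome but matches what the paper implicitly assumes.
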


\begin{cor}\label{cor:2}
 
Let  $X$  be  a   congruence Hurwitz curve.
 Then  the Faber--Pandharipande   0-cycle   $Z_K $   with integral coefficients 
  is rationally equivalent to 0.
 
 \end{cor}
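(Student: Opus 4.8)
The plan is to obtain \Cref{cor:2} as a direct specialization of \Cref{cor:1}; essentially all that has to be done is to match up definitions. Recall from \Cref{Shimura curves} that a congruence Hurwitz curve is, by construction, $X_\Gamma$ for a torsion-free congruence subgroup $\Gamma$ of $\Gamma_O$, where $\Gamma_O\subset PB^+$ is the $(2,3,7)$-triangle group that Shimura \cite{Shi} identifies as the integral subgroup attached to a maximal order $O$ of the quaternion algebra $B$ over the cubic totally real field $F=\BQ(\zeta_7+\zeta_7^{-1})$ that is split at every finite place and at exactly one of the three archimedean places. So the first step is simply to record that this triple $(F,B,O)$ is of the type admitted in \Cref{Shimura curves}: $F$ is totally real, $B_\tau$ is the matrix algebra at one infinite place and division at the others, and $\Gamma_O$ is the associated integral subgroup of $PB^+$.

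Next I would check, for such a $\Gamma$, the hypotheses under which \Cref{cor:1} is stated. Cocompactness of $\Gamma_O$ is part of Shimura's theorem (equivalently, $B$ is a division algebra, since it ramifies at two archimedean places), and a finite-index subgroup of a cocompact Fuchsian group is again cocompact; torsion-freeness of $\Gamma$ is built into the definition of a congruence Hurwitz curve. Consequently $X_\Gamma=\Gamma\bsl\BH$ is a compact Riemann surface uniformized by $\BH$, hence of genus $g\geq 2$ automatically, so the standing hypothesis ``$g\geq 2$'' of \Cref{thm:1} and \Cref{cor:1} is met. Finally, because $\Gamma$ is a congruence subgroup of $\Gamma_O$, \Cref{conncomp} realizes $X_\Gamma$ canonically as the geometrically connected component of a Shimura curve $S_\Gamma$ over $F$ --- exactly the arithmetic input that \Cref{thm:1}, and hence \Cref{cor:1}, requires. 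Having verified all the hypotheses, I would then invoke \Cref{cor:1} to conclude that $Z_K$, with integral coefficients, is rationally equivalent to $0$.

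There is no real obstacle here. The one point that calls for a little care --- bookkeeping rather than a genuine difficulty --- is the identification of ``congruence subgroup of the triangle group $\Gamma_O$'', i.e.\ a subgroup containing some principal congruence subgroup $\Gamma_O(\fn)$, with the order-theoretic notion of congruence subgroup of $PB^+$ used in \Cref{Shimura curves}, namely a subgroup containing $\Gamma_{O'}$ with finite index for some order $O'$ of $B$. This is the standard comparison of compact open subgroups of $PB(\BA_{F,\rf})$: for a suitable order $O'$ (e.g.\ $O'=O_F+\fn O$) one has $\Gamma_{O'}\subset\Gamma_O(\fn)$, so the subgroups $\Gamma_{O'}$ attached to orders commensurable with $O$ are cofinal among the congruence subgroups of $\Gamma_O$; this is the content of \Cref{conncomp}. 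Granting that, \Cref{cor:2} is immediate, since all the substance already lies in \Cref{cor:1}.
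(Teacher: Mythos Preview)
Your proposal is correct and matches the paper's approach exactly: \Cref{cor:2} is stated immediately after \Cref{cor:1} with no separate proof, because a congruence Hurwitz curve is by definition $X_\Gamma$ for a torsion-free cocompact congruence subgroup $\Gamma\subset\Gamma_O\subset PB^+$, so \Cref{cor:1} applies verbatim. Your bookkeeping paragraph on the two notions of ``congruence subgroup'' is harmless but unnecessary here, since the paper's definition of congruence Hurwitz curve already uses the order-theoretic notion from \Cref{Shimura curves}.
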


 Some remarks   are in order.
   
If $\ol\Gamma$ is a not-necessarily  torsion-free or cocompact congruence subgroup of $\PSL_2(\BR)$,  $\Gamma\bsl \BH$ has a canonical realization as a Zariski open subset of a  smooth projective 1-dimensional Deligne--Mumford stack   $ \cX_\Gamma$ over $\BC$, whose coarse moduli is still a  smooth projective curve that we denote by
 $X_\Gamma$. 
The pushfoward of a canonical divisor of  $\cX_\Gamma$ to $X_\Gamma$ 
is an orbifold  canonical divisor of $X_\Gamma$, 
which may have $\BQ$-coefficient.    
 Replacing $K$ by an orbifold  canonical divisor of $X_\Gamma$, \Cref{thm:1} still holds in this more general case (\Cref{thm:2} and \Cref{QZcusp}).
Below, in this paper, we will only consider coarse, instead of stacky, quotient. 
 
 The complementary points of $\Gamma\bsl \BH$ in $X_\Gamma$ are called the cusps,  by convention.  They exist if and only  if $X_\Gamma$ is a modular curve, i.e., when $B$ is the matrix algebra over $\BQ$. 
 In this case, \Cref{thm:1} also follows from   the   Manin--Drinfeld theorem (\Cref{MD}) on the rational equivalences among the cusps up to torsion.   
Inspired by Kato's Euler system building on explicit rational equivalences among cusps,   
we propose to study the explicit  constructions of   the rational equivalences to 0 in
\Cref{thm:1} 
for general Shimura curves.  
 \subsection{Real multiplication}\label{Real multiplication}
The proof of  \Cref {thm:1} relies on two special features of  $  X_\Gamma$.  
The first feature is that there are  many (roughly speaking)    \etale  correspondences from $X$ to $X$.
They generalize automorphisms of curves, and were explored in  our previous works \cite{QZ1,QZ2}. A new feature exploited  here is the fact that  $  X_\Gamma$ has real multiplication.

For a curve with real multiplication, its irreducible submotives in $h^1$ are essentially   self-dual of dimension 2 in the sense of Kimura and O'Sullivan \cite{Kim,OS}.
In this sense, they are just like elliptic curves. For an elliptic curve $E$ and two points $P,Q$ on $A$, it is a simple but beautiful result of Beauville and   Voisin  \cite[Lemma 2.5]{BV}
that $ (P,P)-(P,Q)-(Q,P)+(Q,Q)$ is trivial in $\Ch^2(E^2)$.  We generalize this fact in \Cref{Vanishing results} to curves with real multiplication.
 We can actually  get stronger results for curves with complex multiplication. See \Cref{prevan2}  (2) and \Cref{eg:2} (2)(3). 

Besides Shimura curves, we study two other classes of curves. First, we define $G$-multiplication  of a (branched) $G$-cover $ X\to  \BP^1$ over $\BC$, where $G$ is a finite  group,     in terms of the $G$-representation $H^1(X,\BQ)$. If it has $G$-multiplication, it has real multiplication too.
And we show the vanishing of Faber--Pandharipande 0-cycle in \Cref{Galois branched cover}.
Second, we prove the vanishing of Faber--Pandharipande 0-cycle
for cyclic unramified coverings of genus $2$ curves of odd prime degrees in  \Cref{First example}, using the fact that  their  Prym varieties have real multiplication.

 \subsection{Relation to other algebraic cycles} 
The existence of torsion points on $X$ under a canonical Abel--Jacobi embedding into its Jacobian variety implies the vanishing of the Faber--Pandharipande 0-cycle. See \Cref{Subcanonical
  curves general}.  
  We want to exclude this reason for our examples of  Faber--Pandharipande 0-cycle.
  In \Cref{MD}, we propose   suitable generalizations of the  Coleman--Kaskel--Ribet conjecture \cite{CKR,Bak} on the non-existence of torsion points  on certain modular curves to 
arbitrary Shimura curves. We give some evidence to one of our conjectures. 
In \Cref{coverhec}, we also propose a generalization of the   Poonen--Stoll theorem \cite{PS}   on the generic non-existence of torsion points  on hyperellipic curves to their odd cyclic unramified covers. We prove the ``minimal'' case of our conjecture.
The case of a $G$-cover of $   \BP^1$ with $G$-multiplication is more complicated.

 The relation between the Faber--Pandharipande 0-cycle  and the modified diagonal cycle has been discussed in \cite{QZ1}. Besides, we also noticed its deep relation  with zero cycles on Abelian varieties, which shall be discussed in a subsequent work. 
 \subsection{Structure of the paper} 
In
 \Cref  {Motivic decomposition  of a curve}, we study  0-cycles on irreducible submotives in $h^1$ 
 of a curve with real multiplication. 
 In 
 \Cref  {Faber--Pandharipande type 0-cycles},   we prove \Cref{thm:1} and analogous results for other curves mentioned above.
Finally, in 
 \Cref{Subcanonical curves}, we 
explore  the relation between the Faber--Pandharipande   0-cycles
and torsion points on curves.

    \subsection*{Acknowledgments}

The author  would like to thank  Jun~Su for introducing  
the Faber--Pandharipande cycle.   
The author  would like to thank   
 Burt~Totaro for his comments on an earlier work of the author with Wei~Zhang which inspired the main method in this paper. 
 The author  would like to thank   Qizheng~Yin  for his communication on subcanonicality. 
Finally, 
the author  would  also like to thank     Wanlin~Li, 
Ben~Moonen  and Jennifer~Paulhus
for their helps.
 
   \section{Motivic decomposition  of a curve}\label{Motivic decomposition  of a curve}
 We consider the motivic decomposition  of a curve with a large coefficient field.   Then we prove that some pieces of $(\delta\times \delta)_* \lb  [\Delta]\cdot  \wh e\rb $ vanish using results of Kimura \cite{Kim}.

  Let $\oll$ be a field   of characteristic 0.  
   Let $\Ch^i(V)$ be the Chow group  of a variety $V$   of  codimension $i$ cycles with   $\oll$-coefficients.

   \subsection{Some isomorphisms}\label{Divisorial correspondences}

   Let $X,X'$ be  varieties over  $k$, always assumed to be connected smooth projective.  However, we do not assume geometrically connectedness.   
 Fix $e \in \Ch_0(X) ,e'\in \Ch_0(X')$    of degree 1 on each geometrically connected component.   
 \begin{defn} 
   Let $\DC^1(X\times X')$ be the subspace of  $\Ch^{1}(X \times X' )$ of $z$'s such that $z_* e=0$ and $z^*e'=0$.  
  \end{defn}
   We call  cycles  in $\DC^1(X\times X')$  divisorial correspondences with respect to  $e,e'$.

Let $A$ be an abelian variety over $k$. 
The abelian group structure of $A$ induces an abelian  group structure on $  \Mor(X,A) $, the set of morphisms from $X$ to $A$ as $k$-varieties. 
  \begin{defn}

Let $\Mor^0(X,A)_\oll$ be the subspace of  $f\in \Mor(X,A)_\oll $  such that
for some (equivalently for all) representative $\sum_i a_i p_i$ of $e_{\ol k}\in \Ch^1(X_{\ol k})$, where $a_i\in \oll$ and $p_i\in X( \ol k)$, the sum of  $ f(p_i)\otimes a_i $ over $i$'s in the group $A(\ol k)\otimes {\oll}$ is the identity.  \end{defn}

      Assume that $X$ is a curve.    Let $J=\Pic^0_{X/k}$ be the Jacobian variety of $X$. A canonical  element in $\Mor^0(X,J)$ is defined as follows.
      Assume $e = [D]/n$ where $D\in \Pic_X(k)$. Then we have a map $X\to J: x\mapsto nx-D$. Its tensor with $1/n$ is in $\Mor^0(X,J)$ and independent of $n$.
       This element plays two roles. First,  by pullback of divisors, it induces a quasi-isogeny 
\begin{equation}\label{canp}
 J^\vee\to J,
 \end{equation}     an element   in $\Hom(J^\vee,J)_\oll$. It  does not depend on the choice of $e$, since degree zero line bundles on an abelian variety is translation invariant. Indeed, it
is the negative of the inverse of the canonical polarization in $\Hom(J^\vee,J)_\oll$ (so that it has height 0), see for example \cite[17.3]{Pol}. 
Second, the Albanese property of the Jacobian variety   induces an isomorphism 
\begin{equation}\label{alb}
\Mor^0(X,A)_\oll\cong\Hom(J,A)_\oll.
\end{equation}

   By  \cite[Theorem 3.9]{Sch},  the following natural map  induced by pushforward  of divisors
is  an isomorphism   
\begin{equation}\label{Sch3.9}
 \DC^1(X\times X')\cong \Hom(J,\Pic^0_{X'/F})_\oll.
 \end{equation} 
 Let $ \DC^1(X\times A) $ be  defined   with $X'=A,e'$ the identity of $A$. 
By \eqref{canp}, \eqref{Sch3.9} and taking dual homomorphism, we have 
\begin{equation}\label{dualh}
 \DC^1(X\times A)\cong \Hom(A,J)_\oll.
\end{equation}

If $X'=X,e'=e$, then $\DC^1(X\times X)$  is an $\oll$-subalgebra, without sharing the same identity, of the correspondence algebra $$\Corr(X,X) := \Ch^{1}(X^2)$$
  under composition of correspondences
  $$(x,y)\mapsto x\circ y.$$ And \eqref{Sch3.9} becomes
 \begin{equation}\label{Sch3.9'}
 \DC^1(X^2)\cong \End(J)_\oll,
 \end{equation} 
  an isomorphism of $\oll$-algebras.

We generalize the definition of $ \wh e,\delta$ in \Cref{1.2} as follows. 
Let $X_{\ol k} =\coprod X_\alpha$ be the decomposition into geometrically connected components.
 Let
\begin{equation}\label{edelta}
 \wh e= (e\times [X] +[X]\times e)|_{\coprod X_\alpha^2},\quad \delta=   [\Delta]-\wh e.
 \end{equation} 
Equivalently, define $ \wh e,\delta$ as in \Cref{1.2} for each  $X_\alpha^2$ and take the union.
Then
the identity of $ \DC^1(X^2)$ is  $\delta $ and $\delta^t=\delta$ by definition.

The
 subalgebra $ \DC^1(X^2)$ of $\Corr(X,X) $ is  stable  under the involution by  taking  transpose
 of a correspondence
  $$z\mapsto z^t.$$  
 
       Moreover, we have the following well-known duality.   
\begin{lem}  \label{transdual}
 Under  the  isomorphism \eqref{canp}
 and \eqref{Sch3.9'}, taking transpose on $ \DC^1(X^2)$ corresponds to taking dual homomorphism, i.e.,  the Rosati involution 
  on $\End(J)_\oll $ associated  with  the canonical polarization.
 
 \end{lem}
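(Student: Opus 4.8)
The plan is to reduce the statement to a compatibility between two well-understood constructions: pullback of divisors along the canonical map $X \to J$, and the duality of abelian varieties. Recall from \eqref{Sch3.9'} that $\DC^1(X^2) \cong \End(J)_\oll$ via pushforward of divisors, and from \eqref{canp} that the canonical element of $\Mor^0(X,J)$ induces a quasi-isogeny $\varphi \colon J^\vee \to J$ which is the negative inverse of the canonical principal polarization $\lambda \colon J \to J^\vee$. The Rosati involution is $\psi \mapsto \psi^\dagger := \lambda^{-1} \circ \psi^\vee \circ \lambda$, so the claim is that the transpose map $z \mapsto z^t$ on $\DC^1(X^2)$ intertwines with $\psi \mapsto \psi^\dagger$ on $\End(J)_\oll$ under the isomorphism \eqref{Sch3.9'}.

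First I would make the identification \eqref{Sch3.9'} completely explicit. Using the Albanese isomorphism \eqref{alb} together with \eqref{Sch3.9} (with $X' = X$), a divisorial correspondence $z \in \DC^1(X^2)$ corresponds to the homomorphism $J \to \Pic^0_{X/k} = J$ obtained by: first sending $a \in J$ to the divisor class $z_*(\text{something representing } a)$ — more precisely, the map \eqref{Sch3.9} records the homomorphism $J \to \Pic^0_{X'/k}$ induced by viewing $z$ as a family of divisor classes on $X'$ parametrized by $X$, then composing with the Albanese property. I would track through how the transpose $z^t$ simply swaps the roles of the two factors, hence swaps "family parametrized by the first factor" with "family parametrized by the second factor". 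The content is then a see-saw/biduality statement: for a line bundle $\cL$ on $X \times X$, the two homomorphisms $J \to J$ obtained by the two projections differ by composition with the dualization and the identifications $J \cong J^\vee$ coming from the theta divisor on each side.

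The key technical step is the following. For a line bundle $\cL$ on $X \times X'$ lying in $\DC^1$, one gets a map $\phi_\cL \colon X \to \Pic^0_{X'} \cong (J')^\vee$ (after the canonical identification $\Pic^0_{X'} = (J')^\vee$), which by \eqref{alb} factors through a homomorphism $f_\cL \colon J \to (J')^\vee$; symmetrically, the "other" map $\phi_\cL^t \colon X' \to \Pic^0_X \cong J^\vee$ factors through $f_{\cL}^t \colon J' \to J^\vee$. The standard fact (see e.g.\ Mumford's \emph{Abelian Varieties}, or \cite{Pol}, on the symmetry of the map attached to a line bundle on a product of abelian varieties, applied after Albanese) is that $f_{\cL}^t = (f_\cL)^\vee$ under the biduality isomorphism $J' \cong (J'^\vee)^\vee$. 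Specializing to $X' = X$ and then transporting along $\varphi = -\lambda^{-1}$ (which is how $\DC^1(X^2)$ gets identified with $\End(J)_\oll$ rather than with $\Hom(J, J^\vee)_\oll$), the relation $f_{\cL}^t = (f_\cL)^\vee$ becomes exactly $\psi^t = \lambda^{-1} \psi^\vee \lambda = \psi^\dagger$, because the two factors of $\varphi$ that appear cancel correctly and the sign is squared away. I would also need the compatibility of \eqref{Sch3.9} with taking transposes, which is essentially its definition via pushforward of divisors, plus the fact — noted already in the excerpt — that $\delta^t = \delta$, so that the identity element is preserved and the statement makes sense as an algebra anti-involution matching the Rosati anti-involution.

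The main obstacle I anticipate is purely bookkeeping: getting all the signs and all the canonical identifications ($\Pic^0_X = J^\vee$, the theta-divisor polarization $\lambda$, the canonical quasi-isogeny $\varphi = -\lambda^{-1}$, biduality $J = (J^\vee)^\vee$) to line up, since each of \eqref{canp}, \eqref{alb}, \eqref{Sch3.9}, and \eqref{Sch3.9'} hides one such identification and transposing interacts with all of them at once. There is no deep input beyond the symmetry of the homomorphism attached to a line bundle on a product of abelian varieties together with the theorem of the cube; the work is in verifying that the sign in $\varphi = -\lambda^{-1}$ does not disturb the conclusion (it appears on both sides and cancels) and that $\Hom(J,A)_\oll$-level functoriality of \eqref{alb} is compatible with \eqref{Sch3.9} under swapping factors. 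Accordingly I would present the proof as: (i) unwind \eqref{Sch3.9'} into a homomorphism $J \to J$ via \eqref{Sch3.9} and \eqref{alb}; (ii) observe transpose swaps factors; (iii) invoke the symmetry lemma for line bundles on products; (iv) transport through $\varphi$ and check the signs cancel, concluding that transpose $=$ Rosati.
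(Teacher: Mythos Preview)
The paper does not actually prove this lemma: it introduces it with the phrase ``the following well-known duality'' and then states it without proof, moving directly to the next lemma. Your proposal is a correct and standard route to establishing this fact --- the key input really is the symmetry $f_{\cL^t} = (f_\cL)^\vee$ for the homomorphism of abelian varieties attached to a line bundle on a product (a consequence of the see-saw principle / theorem of the cube), followed by transporting along the canonical polarization --- so there is nothing in the paper's own argument to compare against.
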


\begin{lem}  \label{pushcomp2}Let $a,b, c\in \Ch^1(X^2).$
Then $(a\times b)_*c=b\circ c\circ a^t$.
\end{lem}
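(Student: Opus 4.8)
The identity is purely formal --- it holds in the correspondence algebra of any $X$ --- so the plan is to unwind both sides into a single pushforward of a triple intersection product on $X^4$ and then match the two expressions under a permutation of the four copies of $X$. First I would fix conventions. For $\alpha,\beta\in\Ch^1(X^2)$ viewed as correspondences from $X$ to $X$ (left factor $=$ source), write $\beta\circ\alpha=p_{13,*}\big(p_{12}^*\alpha\cdot p_{23}^*\beta\big)$ with $p_{ij}\colon X^3\to X^2$ the coordinate projections (so that $(\beta\circ\alpha)_*=\beta_*\circ\alpha_*$); for a self-correspondence $\Gamma\in\Ch^1(X^2\times X^2)$ of $X^2$ and $c\in\Ch^1(X^2)$, write $\Gamma_*c=\pi_{2,*}\big(\pi_1^*c\cdot\Gamma\big)$ with $\pi_1,\pi_2\colon X^2\times X^2\to X^2$; and take the convention that $a\times b$, as a self-correspondence of $X^2$, is the \emph{componentwise} one: writing $X^2\times X^2=X_1\times X_2\times X_3\times X_4$ with source $X_1\times X_2$ and target $X_3\times X_4$, its class is $p_{13}^*a\cdot p_{24}^*b$, so that $a$ links the first source and first target coordinates and $b$ the second ones (equivalently, $\pi_1=p_{12}$, $\pi_2=p_{34}$ and $a\times b$ is the ordinary external product twisted by swapping the two inner factors).

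With these in hand, the left-hand side unwinds immediately on $X^4$ to
\[
(a\times b)_*c=p_{34,*}\big(p_{12}^*c\cdot p_{13}^*a\cdot p_{24}^*b\big).
\]
For the right-hand side I would first record the triple-composition identity: for correspondences $\alpha,\beta,\gamma$ from $X$ to $X$,
\[
\gamma\circ\beta\circ\alpha=p_{14,*}\big(p_{12}^*\alpha\cdot p_{23}^*\beta\cdot p_{34}^*\gamma\big)\qquad\text{on }X^4=X_1\times\cdots\times X_4,
\]
which follows from associativity of $\circ$ together with flat base change (the relevant square is Cartesian, being cut out by projections of products of smooth projective varieties) and the projection formula. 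Applying it with $(\alpha,\beta,\gamma)=(a^t,c,b)$ and noting that $p_{12}^*(a^t)$ is $a$ pulled back along $(x_1,x_2,x_3,x_4)\mapsto(x_2,x_1)$, this presents $b\circ c\circ a^t$ as a pushforward from $X^4$ of a triple product in which $a$ enters through factors $(2,1)$, $c$ through $(2,3)$, $b$ through $(3,4)$, with output projection $p_{14}$.

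It then remains only to compare the two expressions, and a cyclic permutation of the first three factors of $X^4$ (fixing the fourth) carries one integrand to the other: it matches the occurrences of $a$, of $b$, and of $c$ respecting their source and target coordinates, and it carries the output projection to $p_{34}$; hence the two pushforwards coincide in $\Ch^1(X^2)$, which is the assertion. I do not expect a genuine obstacle here --- the mathematical content is nil and the whole argument is index bookkeeping plus the projection formula. The one point that really needs care is pinning down which factor of $X^2\times X^2$ is the ``source'' in the definition of $a\times b$, since that is exactly what decides whether the transpose lands on $a$ (giving $b\circ c\circ a^t$ as stated) or on $b$; once the convention is fixed, everything else is forced. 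As a consistency check I would also verify the formula on graph correspondences $a=\Gamma_f$, $b=\Gamma_g$, $c=\Gamma_h$ of morphisms $f,g,h\colon X\to X$, where both sides are visibly the cycle $\{(f(x),g(h(x))):x\in X\}$, and then invoke bilinearity; but the direct computation above avoids any spanning argument and is the version I would write up.
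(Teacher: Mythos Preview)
Your argument is correct: once you fix the componentwise convention for $a\times b$ on $X^4=X_1\times X_2\times X_3\times X_4$ (source $X_1\times X_2$, target $X_3\times X_4$, class $p_{13}^*a\cdot p_{24}^*b$), the left side unwinds to $p_{34,*}(p_{12}^*c\cdot p_{13}^*a\cdot p_{24}^*b)$, the right side to $p_{14,*}(p_{21}^*a\cdot p_{23}^*c\cdot p_{34}^*b)$, and the cyclic permutation $(1\ 2\ 3)$ of the first three factors matches the integrands and carries $p_{14}$ to $p_{34}$. The triple-composition identity you use is standard and your justification (base change plus projection formula) is the right one; the graph-correspondence sanity check is a nice confirmation of the transpose convention.

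The paper takes a different and much shorter route: it simply invokes Lieberman's lemma (Vial, Lemma~3.3), of which the present statement is a special case, and adds that one may alternatively ``compare their action on $H^*(X)$''. Your computation is in effect the proof of Lieberman's lemma written out in this particular situation---self-contained and explicit, whereas the paper defers to the literature. The paper's cohomological remark should not be read as a standalone argument (two classes in $\Ch^1(X^2)$ inducing the same endomorphism of $H^*(X)$ need not coincide in Chow); it is better understood as shorthand for the purely formal nature of the identity, which is precisely what your direct calculation, and Lieberman's lemma in general, establish.
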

  \begin{proof}
This is a special case of Lieberman's lemma \cite[Lemma 3.3]{Via}.
In our case, it may be directly proved  by comparing their action on  $H^*(X)$.  
  \end{proof} 
 We have two corollaries of the above lemma.
 \begin{cor}  \label{pushcomp3}We have $(\delta \times \delta )_*  \Ch^1(X^2)= \DC^1(X^2).$
\end{cor}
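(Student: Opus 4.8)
The plan is to deduce everything from \Cref{pushcomp2} (Lieberman's lemma) together with the two facts recorded above: that $\delta=[\Delta]-\wh e$ is the two-sided identity of the correspondence subalgebra $\DC^1(X^2)\subseteq\Corr(X,X)=\Ch^1(X^2)$, and that $\delta^t=\delta$. By \Cref{pushcomp2} applied with $a=b=\delta$, for every $c\in\Ch^1(X^2)$ one has
\[
(\delta\times\delta)_*c=\delta\circ c\circ\delta^t=\delta\circ c\circ\delta ,
\]
so the statement becomes a statement about two-sided composition with $\delta$ inside $\Corr(X,X)$.

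First I would prove the inclusion $(\delta\times\delta)_*\Ch^1(X^2)\subseteq\DC^1(X^2)$. Since $\delta$ is the identity of $\DC^1(X^2)$ it lies in $\DC^1(X^2)$, which by definition means $\delta_*e=0$ and $\delta^*e=0$. Fix $c\in\Ch^1(X^2)$ and put $w=\delta\circ c\circ\delta$. Because pushforward along correspondences is a ring homomorphism $\Corr(X,X)\to\End(\Ch_0(X))$, we get $w_*e=\delta_*\bigl(c_*(\delta_*e)\bigr)=\delta_*(c_*0)=0$; dually, since pullback along correspondences is an anti-homomorphism, $w^*e=\delta^*\bigl(c^*(\delta^*e)\bigr)=0$. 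Hence $w\in\DC^1(X^2)$, and by the displayed identity $w=(\delta\times\delta)_*c$; this gives the inclusion.

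Next I would prove the reverse inclusion $\DC^1(X^2)\subseteq(\delta\times\delta)_*\Ch^1(X^2)$. Let $w\in\DC^1(X^2)$. Since $\delta$ is the two-sided identity of $\DC^1(X^2)$ we have $\delta\circ w\circ\delta=w$, and by \Cref{pushcomp2} (again using $\delta^t=\delta$) this reads $w=(\delta\times\delta)_*w$. As $w\in\Ch^1(X^2)$, this exhibits $w\in(\delta\times\delta)_*\Ch^1(X^2)$, finishing the proof.

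There is no real obstacle here: the content is entirely \Cref{pushcomp2} together with the fact that $\delta$ is an idempotent serving as the unit of $\DC^1(X^2)$. The only point requiring care is fixing the conventions for the action of correspondences on Chow groups and for the composition $\circ$, but these never affect the conclusion, since $\delta$ occurs symmetrically at both ends of $\delta\circ c\circ\delta$, so whichever factor one contracts first against $e$ one lands on $\delta_*e$ or $\delta^*e$, both of which vanish. In the non-geometrically-connected case one runs the same argument componentwise on each $X_\alpha^2$, using that $e$, $\wh e$ and $\delta$ were defined componentwise in \eqref{edelta}.
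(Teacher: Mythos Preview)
Your proof is correct and is precisely the argument the paper intends: the corollary is stated immediately after \Cref{pushcomp2} without further comment, and your deduction via $(\delta\times\delta)_*c=\delta\circ c\circ\delta$ together with $\delta$ being the self-transpose identity of $\DC^1(X^2)$ is exactly the point. The same content reappears later as \Cref{lem:DCDC1}, confirming that this is the intended route.
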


  Let $H$ be a Weil cohomology theory   with coefficients in   $\oll$ (or any field extension but we do not need this generality).

  \begin{cor} \label{prevan1}
  
    Let  $\Delta\subset X^2$  be the diagonal.     
  Let $p,q\neq 0\in \Corr(X,X)$ be   projectors\footnote{Recall that 
 projectors are just
idempotents the  algebra of correspondences.}. 

(1) If  $q \circ p^t  = 0  $, then  $ (p\times q)_*   [\Delta]   = 0$.

   (2) Modulo homological equivalence, 
   $ (p^t\times p)_*   [\Delta]  $ is nonzero.
    
 \end{cor}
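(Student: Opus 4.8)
The plan is to reduce both parts to the composition product on $\Corr(X,X)=\Ch^1(X^2)$ by means of Lieberman's lemma, \Cref{pushcomp2}. Applying that lemma with $(a,b,c)=(p,q,[\Delta])$ and with $(a,b,c)=(p^t,p,[\Delta])$, and using that the diagonal $[\Delta]$ is the identity of $\Corr(X,X)$ and that $(p^t)^t=p$, one gets
\[
(p\times q)_*[\Delta]=q\circ[\Delta]\circ p^t=q\circ p^t,\qquad
(p^t\times p)_*[\Delta]=p\circ[\Delta]\circ p=p\circ p=p,
\]
the last equality because $p$ is a projector. Part (1) is then immediate from the hypothesis $q\circ p^t=0$. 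For part (2) it remains to show that the nonzero projector $p$, viewed as a divisor class on the surface $X^2$, is not homologically trivial.

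For this I would first note that $\cN:=\ker\bigl(\Ch^1(X^2)\to H^2(X^2)\bigr)$, the space of homologically trivial self-correspondences, is a two-sided ideal of $\Corr(X,X)$ under $\circ$, since the cycle class map is compatible with composition of correspondences. Hence if $p$ were homologically trivial we would have $p=p\circ p\in\cN\circ\cN$, so it suffices to prove $\cN\circ\cN=0$.

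To prove $\cN\circ\cN=0$ I would make $\cN$ explicit. A homologically trivial divisor class on the smooth projective surface $X^2$ lies in $\Pic^0(X^2)_\oll$ (homological and algebraic equivalence agree for divisors), and $\Pic^0$ of a product splits, so every element of $\cN$ has the form $p_1^*a+p_2^*b$ with $a,b\in\Pic^0(X)_\oll$, where $p_1,p_2\colon X^2\to X$ are the two projections. It then suffices to compute $p_i^*a\circ p_j^*b$ for $a,b\in\Pic^0(X)_\oll$ and $i,j\in\{1,2\}$. Expanding $z\circ w=p_{13*}\bigl(p_{12}^*w\cdot p_{23}^*z\bigr)$ on $X^3$, each of the four cases produces either a product of two divisors pulled back from the same factor of $X^3$ (which lands in $\Ch^2$ of a curve, hence $0$), or a pushforward along a coordinate projection that strictly drops dimension (hence $0$), or a class of the shape $(\deg a)\,p_j^*b$ or $(\deg b)\,p_i^*a$; since $a$ and $b$ have degree $0$, all of these vanish. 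Thus $\cN\circ\cN=0$, so $p\notin\cN$, which is precisely the statement that $(p^t\times p)_*[\Delta]=p$ is nonzero modulo homological equivalence.

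The genuine input here, beyond \Cref{pushcomp2}, is the standard fact that homological and algebraic equivalence coincide for divisors on the surface $X^2$, which is what pins down $\cN$ inside $\Pic^0(X^2)_\oll$; the only laborious part is the last computation, where one must keep track of which factor of $X^3$ each pullback comes from when expanding the triple-product formula. I would emphasize that the square-zero property of $\cN$ is special to $X$ being a curve: for $\dim X>1$ this argument would not exclude homologically trivial nonzero projectors, consistent with the placement of the statement in the section on curves.
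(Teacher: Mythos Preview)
Your argument is correct. Part (1) is exactly what the paper intends: \Cref{pushcomp2} gives $(p\times q)_*[\Delta]=q\circ p^t$ immediately. For part (2), the paper likewise leaves implicit the reduction $(p^t\times p)_*[\Delta]=p\circ p=p$ via \Cref{pushcomp2}, and does not spell out why a nonzero idempotent in $\Corr(X,X)$ is homologically nontrivial; your proof that the ideal $\cN$ of homologically trivial self-correspondences satisfies $\cN\circ\cN=0$ is a clean, self-contained way to supply this, and it makes transparent where the curve hypothesis enters (both through $\Ch^2(X)=0$ and through the splitting $\Pic^0(X^2)\cong\Pic^0(X)^2$).

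One remark on alternatives: once the Kimura machinery of Section \ref{Finite-dimensional  motives} is available, the same conclusion follows from \Cref{dim0}, since if $p$ were homologically trivial then $p_*H^*(X)=0$ and hence $(X,p)=0$. Your direct computation has the virtue of being elementary and of respecting the placement of the corollary before that section. A minor caveat: the paper allows $X$ to be connected but not geometrically connected, so strictly speaking the identification $\Pic^0(X^2)_\oll=p_1^*\Pic^0(X)_\oll+p_2^*\Pic^0(X)_\oll$ should be checked after base change to $\bar k$ and componentwise; this is harmless for the conclusion.
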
 
 

In \cite[Section 2]{QZ2}, we used the duality between $\Hom(J,A)_\oll$ and $\Hom(A,J)_\oll$, as $A$ varies simple isogeny factors of $J$,   to decompose 
$\End(J)_\oll$, and thus to decompose 
the Chow motive $(X,\delta)$ via \eqref{Sch3.9'}. 
 Below, we want to extend this decomposition.  Before that, let us recall the results of Kimura we will apply after the deomposition.  
 
 \subsection{Finite-dimensional  motives}\label{Finite-dimensional  motives}
Let 
  $\cM_\oll$ be the category of Chow motives over $k$ (modulo rational equivalence) with coefficients in a field  $\oll$ of characteristic 0. 
 
 We use the the notion of ``finite-dimensionality" of motives    introduced  by Kimura \cite[Definition 3.7]{Kim} and independently by O'Sullivan \cite{OS}. 
  We will use \cite{Kim} and the results there extensively.
  Though the notion is   introduced  for  motives with coefficients in $\BQ$ in loc. cit.,  it applies to motives with coefficients in $\oll$ is the same way. See also  \cite{OS}.

 For $M\in \cM_\oll$   a finite-dimensional  motive \cite[Definition 3.7]{Kim}, let $\dim M$ be its dimension \cite[Definition 6.4]{Kim}.

 Kimura \cite[Theorem 4.2, Proposition 5.10, Proposition 6.9]{Kim} proved the following theorem.
\begin{thm} Every direct factor of the motive associated to a variety dominated by a product of curves is finite-dimensional. 
In particular this is true for an abelian variety.
\end{thm}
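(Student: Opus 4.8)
The plan is to follow Kimura's original argument \cite{Kim}, which rests on three ingredients: closure properties of finite-dimensional motives, the finite-dimensionality of the motive of a single curve, and the standard reduction of ``dominated by'' to ``direct factor of''. First I would record the formal properties. In $\cM_\oll$, write $\wedge^n M$ and $\Sym^n M$ for the images of the anti-symmetrizing and symmetrizing idempotents acting on $M^{\otimes n}$; call $M$ \emph{evenly} (resp.\ \emph{oddly}) finite-dimensional if $\wedge^n M=0$ (resp.\ $\Sym^n M=0$) for some $n$, and \emph{finite-dimensional} if $M\simeq M_+\oplus M_-$ with $M_+$ evenly and $M_-$ oddly finite-dimensional. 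The class of finite-dimensional motives is visibly stable under $\oplus$, under passing to direct factors, and under duals; the one step with genuine content is stability under $\otimes$, for which I would use the Cauchy/plethysm decompositions of $\wedge^n(V\otimes W)$ and $\Sym^n(V\otimes W)$ into Schur functors $\mathbb{S}_\lambda(V)\otimes\mathbb{S}_{\mu}(W)$ together with the fact that a Schur functor kills an evenly, resp.\ oddly, finite-dimensional motive once the partition has too many rows, resp.\ too many columns: a partition forced into a bounded box has boundedly many boxes, so $\wedge^N(V\otimes W)=0$ or $\Sym^N(V\otimes W)=0$ for $N\gg0$, in the appropriate parity.

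Next I would treat a smooth projective curve $C$ of genus $g$ over $k$. Fixing a degree-one zero-cycle gives the classical Chow--K\"unneth decomposition $h(C)=\mathbf 1\oplus h^1(C)\oplus\mathbf 1(-1)$; here $\mathbf 1$ and the Lefschetz motive $\mathbf 1(-1)$ are evenly finite-dimensional, so the whole point is that $h^1(C)$ is oddly finite-dimensional, with the sharp statement $\Sym^{2g+1}h^1(C)=0$. To prove this, note that $\Sym^N h^1(C)$ is the direct factor of $\Sym^N h(C)=h(C^{(N)})$ (with $C^{(N)}$ the $N$-fold symmetric product) cut out by ``$h^1$ in every slot''; and for $N\ge 2g-1$ the Abel--Jacobi morphism $C^{(N)}\to\Pic^N(C)$ is a Zariski-locally-trivial $\BP^{N-g}$-bundle by Riemann--Roch, so the projective bundle formula gives $h(C^{(N)})\simeq\bigoplus_{r=0}^{N-g}h(\Pic^N C)(-r)$ with $\Pic^N(C)\simeq J=\Jac(C)$. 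Feeding in the (unconditional) decomposition $h(J)=\bigoplus_{i=0}^{2g}h^i(J)$ of Deninger--Murre and matching the two descriptions of $h(C^{(N)})$ --- the summands coming from the right carry ``$h^1$-weight'' at most $2g$, while $\Sym^N h^1(C)$ has pure $h^1$-weight $N$ --- forces $\Sym^N h^1(C)=0$ once $N>2g$. (Equivalently, one may invoke $h^1(C)\simeq h^1(J)$ and the identity $\Sym^i h^1(A)\simeq h^i(A)$ for an abelian variety $A$, due to K\"unnemann, together with $h^i(A)=0$ for $i>2\dim A$.) Hence $h(C)$ is finite-dimensional.

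To close the loop: if a smooth projective $V$ is dominated by a product of curves $C_1\times\cdots\times C_r$ --- so that a generically finite surjection and the identity $\tfrac1{\deg}f_*f^*=\id$ exhibit $h(V)$ as a direct factor of $h(C_1\times\cdots\times C_r)=h(C_1)\otimes\cdots\otimes h(C_r)$ --- then the latter motive is finite-dimensional by the previous two steps, and hence so is every direct factor of $h(V)$. For an abelian variety $A$ over $k$, take a curve $C\subset A$ through the origin cut out by general ample divisors; it generates $A$, so $\Jac(C)\to A$ is a surjective homomorphism and, by Poincar\'e reducibility, $h(A)$ is a direct factor of $h(\Jac C)$; and $h(\Jac C)$ is itself a direct factor of $h(C^g)=h(C)^{\otimes g}$, because $C^g\to C^{(g)}\to\Pic^g(C)\simeq\Jac(C)$ is surjective and generically finite of degree $g!$. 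So $h(A)$ is, after all, a direct factor of the motive of a product of curves, and the preceding paragraph applies.

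The step I expect to be the main obstacle is the vanishing $\Sym^{2g+1}h^1(C)=0$ as Chow motives rather than merely modulo homological equivalence --- homological triviality is clear from $\wedge^{2g+1}H^1(C)=0$, but that alone is not enough --- so the real input is the geometry of $C^{(N)}$ as a projective bundle over a Jacobian combined with the Chow--K\"unneth theory of abelian varieties (Shermenev, Deninger--Murre, K\"unnemann) and a careful bookkeeping of the weight projectors. A secondary but genuine technicality is the plethysm estimate underlying stability of finite-dimensionality under tensor product.
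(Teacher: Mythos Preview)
Your sketch is correct and faithfully reproduces Kimura's argument. Note that the paper does not actually give a proof of this theorem: it is simply stated with the attribution ``Kimura \cite[Theorem 4.2, Proposition 5.10, Proposition 6.9]{Kim} proved the following theorem,'' so there is nothing to compare against beyond the citation, and your three ingredients---the vanishing $\Sym^{2g+1}h^1(C)=0$ via the projective-bundle structure of large symmetric powers (Theorem~4.2), closure of finite-dimensionality under $\otimes$ via Schur-functor plethysm (Proposition~5.10), and the direct-factor reduction for varieties dominated by products of curves (Proposition~6.9)---line up exactly with the three cited results.
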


\begin{lem} \label{dim0}Let $M\in \cM_\oll$ be a finite-dimensional effective motive. If $H^*(M)=0$, then $  M=0$.  
\end{lem}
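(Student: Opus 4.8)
The plan is to reduce the statement to Kimura's nilpotence theorem applied to the identity endomorphism $\id_M \in \End(M)$. First I would note that the hypothesis $H^*(M)=0$ forces $\id_M$ to induce the zero map on $H^*(M)$; that is, $\id_M$ is homologically equivalent to $0$ as a morphism in $\cM_\oll$. Next I would record the standard implication ``homologically trivial $\Rightarrow$ numerically trivial'': for a self-correspondence $f$ with $H^*(f)=0$, every intersection number paired against $f$ vanishes, because the degree of a $0$-cycle is computed through the Weil cohomology $H$. Hence $\id_M$ is numerically trivial.

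Now I would invoke Kimura's theorem \cite[Proposition 7.5]{Kim}: a numerically trivial endomorphism of a finite-dimensional motive is nilpotent. Since $M$ is assumed finite-dimensional, this applies to $\id_M$, giving $\id_M^{\circ n}=0$ for some $n\geq 1$. But $\id_M^{\circ n}=\id_M$, so $\id_M=0$ in $\End(M)$, which is exactly the assertion $M\cong 0$ in $\cM_\oll$.

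I do not anticipate a genuine obstacle here. The only step warranting a word of care is the passage from homological to numerical triviality, and that is immediate once one recalls that the degree map on $0$-cycles factors through $H$. The effectivity hypothesis on $M$ is not actually used in this argument; it is present only because the lemma is applied to effective motives elsewhere in the paper.
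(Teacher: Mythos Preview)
Your argument is correct and is essentially the same as the paper's: the paper's one-line proof says ``Apply \cite[Proposition~10.1]{Kim} to the identity morphism,'' which is exactly your nilpotence argument for $\id_M$ packaged into a single citation. The only cosmetic difference is that you route through numerical triviality and cite \cite[Proposition~7.5]{Kim}, whereas the paper cites the later Proposition~10.1 of Kimura (which already incorporates the homological-trivial $\Rightarrow$ nilpotent step); substantively the proofs coincide.
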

\begin{proof} Apply   \cite[Proposition 10.1]{Kim} to the identity morphism.
\end{proof}

 \begin{cor} \label{prevan0}   
  Let the symmetric group $S_n$ act on $X^n$ by permuting the components. 
  Let $p\in \Corr(X,X)$ be a projector. 
  Assume  $\dim p_*H^*(X)<n$.   Then for $z \in \Ch^*(X^n)$ invariant by $S_n$, we have
  $ (p^{ n})_* z  =0  $. 
 \end{cor}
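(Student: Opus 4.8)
\textbf{Proof proposal for \Cref{prevan0}.}
The plan is to realize $(p^{n})_* z$ as the action of a projector on the motive $(X^n,\Delta_{X^n})$ whose realization in $H^*$ is zero, and then invoke \Cref{dim0}. First I would set $M = (X, p)$, the submotive of $h(X)$ cut out by the projector $p$; by Kimura's theorem (the displayed theorem just before \Cref{dim0}), $M$ is finite-dimensional, hence so is $M^{\otimes n} = (X^n, p^{\otimes n})$, where $p^{\otimes n} = p\times\cdots\times p \in \Corr(X^n,X^n)$ is exactly the correspondence inducing $(p^n)_*$ via \Cref{pushcomp2} (applied $n$ times, or rather its evident multi-factor analogue). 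Then I would observe that $H^*(M^{\otimes n}) = \big(p_* H^*(X)\big)^{\otimes n}$, which has graded dimension $(\dim p_* H^*(X))^n$; in particular its piece in each single cohomological degree is bounded, but the key point is different: since $z$ is $S_n$-invariant, $(p^n)_* z$ actually lives in the $S_n$-invariant (or a chosen isotypic) part, and I would instead apply the symmetrizer.

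More precisely, here is the step I expect to be the crux. Because $z$ is invariant under $S_n$, we have $(p^{\otimes n})_* z = (\mathrm{sym}\circ p^{\otimes n})_* z$ up to a nonzero scalar, where $\mathrm{sym} = \frac{1}{n!}\sum_{\sigma\in S_n}\Gamma_\sigma$ is the symmetrizing projector on $X^n$. The composite $q := \mathrm{sym}\circ p^{\otimes n} = p^{\otimes n}\circ \mathrm{sym}$ is a projector (the factors commute since $\mathrm{sym}$ commutes with $p^{\otimes n}$), and it cuts out the motive $\Sym^n M$, the $n$-th symmetric power of $M$. Its realization is $\Sym^n\big(p_* H^*(X)\big)$ — but here one must be careful with signs: $H^*(X) = H^0\oplus H^1\oplus H^2$, and $\Sym^n$ in the category of graded vector spaces (with Koszul sign rule) means genuine symmetric power on the even part $H^0\oplus H^2$ and exterior power on the odd part $H^1$. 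So $H^*(\Sym^n M)$ is built out of $\Sym^a(p_*H^{\mathrm{ev}})\otimes \wedge^b(p_* H^1)$ with $a+b=n$. If $\dim p_* H^*(X) < n$, then either $p_* H^1 = 0$ together with $\dim p_* H^{\mathrm{ev}} < n$, forcing $\Sym^a$ of something of dimension $<n$ to vanish for the relevant $a$ only when... — this is where the counting has to be done correctly. The clean statement is: if $\dim_\oll p_* H^*(X) = d < n$, then $\Sym^n$ of a super vector space of total dimension $d$ vanishes, because $\wedge^b$ kills degrees $b > \dim(\text{odd part})$ and $\Sym^a$ of the even part is nonzero in all degrees, so one needs the odd part to carry enough room; the correct elementary fact is that $\Sym^n V = 0$ for a $\BZ/2$-graded $V$ iff $V$ is purely even of dimension $0$, which is false in general. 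Hence the honest argument cannot go through $\Sym^n M$ directly and must instead use a degree-by-degree bound.

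Therefore I would run the argument cohomological-degree by cohomological-degree after all, which is cleaner. Since $z\in\Ch^*(X^n)$, decompose $z = \sum_j z_j$ according to the Künneth grading is not available on Chow groups, so instead: $(p^{\otimes n})_* z$ lies in $(p^{\otimes n})_*\Ch^*(X^n)$, which is the Chow group of the motive $M^{\otimes n}$. Now $M^{\otimes n}$ decomposes, using the $S_n$-action and finite-dimensionality, into a sum of motives of the form $\Sym^{a}\big((X,p^{\mathrm{ev}})\big)\otimes\Lambda^{b}\big((X,p^{1})\big)$ where $p^{\mathrm{ev}}, p^1$ are the further splitting of $p$ into its action on even/odd cohomology (this refinement of $p$ exists because $h(X) = h^0\oplus h^1\oplus h^2$ canonically for a curve, and $p$ respects it). The $S_n$-invariant part of $M^{\otimes n}$ that $(p^n)_* z$ can land in, using that $z$ is $S_n$-invariant, is $\Sym^n M = \bigoplus_{a+b=n}\Sym^a(X,p^{\mathrm{ev}})\otimes\Lambda^b(X,p^1)$. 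Each summand is finite-dimensional and effective, and its realization is $\Sym^a\big(p^{\mathrm{ev}}_*H^{\mathrm{ev}}(X)\big)\otimes\Lambda^b\big(p^1_* H^1(X)\big)$. The constraint $\dim p_*H^*(X) = \dim p^{\mathrm{ev}}_* H^{\mathrm{ev}} + \dim p^1_* H^1 < n$ forces, for every $(a,b)$ with $a+b=n$: either $a > \dim p^{\mathrm{ev}}_*H^{\mathrm{ev}}$ — which does not kill $\Sym^a$ — or $b > \dim p^1_* H^1$ — which does kill $\Lambda^b$. Summing $a + b = n > \dim p^{\mathrm{ev}}_*H^{\mathrm{ev}} + \dim p^1_*H^1$ gives $a > \dim p^{\mathrm{ev}}_*H^{\mathrm{ev}}$ or $b > \dim p^1_* H^1$; in the first case I would instead need a vanishing, so in fact the correct reading is that one needs $b \le \dim p^1_*H^1$ AND $a \le \dim p^{\mathrm{ev}}_*H^{\mathrm{ev}}$ simultaneously for nonvanishing, which is impossible. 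Hence every summand of $\Sym^n M$ has zero realization, so by \Cref{dim0} each is the zero motive, so $\Sym^n M = 0$, and therefore $(p^n)_* z = \big(\text{symmetrizer applied to }(p^{\otimes n})_* z\big)$ lies in $\Ch^*(\Sym^n M) = 0$.

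The main obstacle, as flagged above, is the bookkeeping with the Koszul sign rule: one must split $p$ according to the canonical decomposition $h(X)=\mathbf{1}\oplus h^1(X)\oplus\mathbf{1}(-1)$ and track which tensor factors contribute symmetric powers (even cohomology) versus exterior powers (odd cohomology), and then verify the elementary combinatorial fact that $a + b = n$ with $a + b > (\text{even dim}) + (\text{odd dim})$ forces $b > (\text{odd dim})$, killing $\Lambda^b$ of the odd part — which is the only place finite-dimensionality is genuinely used via nilpotence of odd motives. Everything else — that $(p^n)_*z$ equals, up to scalar, the symmetrization, that symmetrization composed with $p^{\otimes n}$ is the projector onto $\Sym^n M$, and that the realization functor is monoidal — is formal and can be cited from \cite{Kim}.
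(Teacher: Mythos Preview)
Your argument contains a genuine error, and in fact the corollary as literally stated is false for arbitrary projectors $p$. The mistake is the claim that nonvanishing of $\Sym^a\bigl(p^{\mathrm{ev}}_* H^{\mathrm{ev}}(X)\bigr) \otimes \wedge^b\bigl(p_* H^1(X)\bigr)$ requires $a \le \dim p^{\mathrm{ev}}_* H^{\mathrm{ev}}(X)$: symmetric powers of a nonzero (ordinary, even-graded) vector space are nonzero in \emph{every} degree. Hence if $p_* H^{\mathrm{ev}}(X) \neq 0$, the summand with $(a,b)=(n,0)$ already gives $\Sym^n\bigl(p^{\mathrm{ev}}_* H^{\mathrm{ev}}(X)\bigr) \neq 0$ inside $H^*(\Sym^n M)$, and \Cref{dim0} does not apply. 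For a concrete counterexample take $p = [\Delta]$ and $z = [X^n] \in \Ch^0(X^n)$: then $\dim p_* H^*(X) = 2g+2$, so for any $n > 2g+2$ the hypothesis holds, yet $(p^n)_* z = [X^n] \neq 0$. Your ``elementary combinatorial fact'' in the last paragraph has the same flaw: $a+b > (\text{even dim}) + (\text{odd dim})$ only forces $a > (\text{even dim})$ \emph{or} $b > (\text{odd dim})$, and the first alternative kills nothing.

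The paper's own proof cites \eqref{lem:coh}, which says $\delta_{\phi,*} H^*(X) = \delta_{\phi,*} H^1(X)$; this reveals the tacit extra hypothesis that $p_* H^*(X)$ is concentrated in odd degree, i.e.\ that $p$ is a sub-projector of $\delta$ (as is the case for every projector the paper actually uses: $\delta$, $\delta_{A,\iota}$, $\delta_\phi$). Under that hypothesis your even part $p^{\mathrm{ev}}$ vanishes, the decomposition collapses to the single term with realization $\wedge^n\bigl(p_* H^1(X)\bigr)$, and this is zero because $n > \dim p_* H^1(X)$; then \Cref{dim0} finishes. That is exactly the paper's one-line argument. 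So your overall strategy --- use $S_n$-invariance to land in $\Ch^*(\Sym^n M)$, compute the realization, and invoke \Cref{dim0} --- is correct and coincides with the paper's once the missing hypothesis is supplied; the overreach is in trying to prove the unrestricted statement, which simply fails.
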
 

\begin{proof} 
Let $M=\Sym^n  (X,p)$.
Then by   \cite[Proposition 3.8]{Ban} and \eqref{lem:coh}, 
we have 
\begin{equation}\label{Hm}
H^*(M)= \wedge^n  \delta_{p,*}H^1(X) = 0
\end{equation} 
 Apply \Cref{dim0}.
\end{proof}

 \begin{eg}\label{eg:1}
 When $n=3$, and $z$ is the class of the diagonal curve. This is related to the modified diagonal cycles. See \cite[2.3]{QZ1}.
For example, if $X$ is has genus 1,  by \cite[(2.7)]{QZ1} and \Cref{eg:2} (1), the  proposition says that  the modified diagonal cycle is trivial in $\Ch^2(X^3)$ with any base point.

\end{eg}

\begin{lem} \label{dimH}Let $M\in \cM_\oll$ be a finite-dimensional effective motive. If $H^i(M)=0$ for $i$ even (resp. odd), then $\dim M= \dim H^*(M)$.  
\end{lem}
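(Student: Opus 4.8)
\textbf{Proof proposal for Lemma \ref{dimH}.}
The plan is to reduce the statement to the effective vanishing result \Cref{dim0} by splitting off the part of $M$ that accounts for the nonzero cohomology in a single parity. First I would recall that, since $M$ is a finite-dimensional effective Chow motive, Kimura's theory provides a canonical decomposition $M = M^+ \oplus M^-$ into its even and odd parts: $M^+$ is the image of the projector cutting out $\wedge$-negligible directions and $M^-$ the image of the complementary one, so that $M^+$ is ``evenly finite-dimensional'' and $M^-$ is ``oddly finite-dimensional'' in the sense of \cite[Definition 3.7, Proposition 5.10]{Kim}. Under any Weil cohomology $H$ this decomposition is compatible with the grading: $H^i(M^+)$ can be nonzero only for $i$ even and $H^i(M^-)$ only for $i$ odd (this is part of Kimura's sign conventions, cf.\ \cite[Proposition 6.1]{Kim}).

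Suppose now $H^i(M)=0$ for all even $i$; the odd case is symmetric. Then $H^*(M^+)=0$. Since $M^+$ is a direct factor of the effective motive $M$, it is itself effective and finite-dimensional, so \Cref{dim0} applies and gives $M^+=0$. Hence $M=M^-$ is oddly finite-dimensional. For an oddly finite-dimensional motive $N$, Kimura's definition of dimension \cite[Definition 6.4]{Kim} (the least $n$ with $\wedge^{n+1}N = 0$, up to the sign normalization) forces $\dim N = \dim_\oll H^*(N)$: indeed $\wedge^{n+1}N$ has cohomology $\wedge^{n+1}H^*(N)$ by \cite[Proposition 3.8]{Ban} together with \eqref{lem:coh}, and since $N$ is finite-dimensional this exterior power vanishes as a motive exactly when its cohomology vanishes, by \Cref{dim0} again; but $\wedge^{m}H^*(N)=0$ iff $m > \dim_\oll H^*(N)$. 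Taking $m = \dim_\oll H^*(N)+1$ shows $\dim N \le \dim_\oll H^*(N)$, and taking $m = \dim_\oll H^*(N)$ shows the reverse. Applying this to $N = M^- = M$ yields $\dim M = \dim_\oll H^*(M)$, as desired.

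The main obstacle I anticipate is bookkeeping with the even/odd decomposition and Kimura's sign conventions: one must be careful that ``effective'' is inherited by the summand $M^\pm$ (it is, since a direct summand of an effective motive is effective), and that the parity of cohomology matches the $\pm$ grading with the orientation Kimura uses, so that the hypothesis $H^i(M)=0$ for even $i$ really does kill $M^+$ rather than $M^-$. Once those compatibilities are in place, everything else is a direct invocation of \Cref{dim0} and the combinatorics of exterior powers of a graded vector space.
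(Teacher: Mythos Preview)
Your overall strategy coincides with the paper's: compute the cohomology of the relevant Schur functor of $M$, invoke \Cref{dim0} to kill it as a motive, and read off $\dim M$. The paper does this in one stroke without the preliminary splitting $M=M^+\oplus M^-$: it just observes that when $H^i(M)=0$ for $i$ even one has $H^*(\Sym^{n}M)=\wedge^n H^*(M)$, so $\Sym^{m+1}M$ has trivial cohomology for $m=\dim H^*(M)$ and hence vanishes by \Cref{dim0}; the reverse inequality is then quoted from \cite[Proposition~3.9]{Kim}. Your detour through $M^\pm$ is harmless but unnecessary.

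Where your write-up goes wrong is exactly the sign bookkeeping you flagged as the likely obstacle. For an \emph{oddly} finite-dimensional motive $N$, Kimura's dimension is the least $n$ with $\Sym^{n+1}N=0$, not $\wedge^{n+1}N=0$; and when $H^*(N)$ sits in odd degree the Koszul sign rule gives $H^*(\Sym^{n}N)=\wedge^{n}H^*(N)$ while $H^*(\wedge^{n}N)=\Sym^{n}H^*(N)$. As you wrote it, $\wedge^{n+1}N$ has cohomology $\Sym^{n+1}H^*(N)$, which never vanishes for $H^*(N)\neq 0$, so your ``least $n$'' does not exist and the argument collapses. Swapping $\wedge$ for $\Sym$ throughout fixes everything, and then your biconditional argument (using \Cref{dim0} in one direction and the trivial implication in the other) is a clean alternative to citing \cite[Proposition~3.9]{Kim}. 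Also, the reference to \eqref{lem:coh} is spurious here; only \cite[Proposition~3.8]{Ban} is needed for the cohomology-of-Schur-functor identity.
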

\begin{proof}  As $H^i(M)=0$ for $i$ even (resp. odd), 
 $H^*(\Sym^{n} M)= \wedge^n H^*(  M)$  (resp. $H^*(\wedge^{n} M)= \wedge^n H^*(  M)$), 
 see \cite[Proposition 3.8]{Ban}. We prove the even case, and the odd case is the same.
  Then $H^*(\Sym^{m+1} M)=0$ 
 for $m=\dim  H^*(M)$.
 By   \Cref{dim0}, $M=0$.
  Thus $\dim M\leq  \dim H^*(M)$. By 
\cite[Proposition 3.9]{Kim} which gives the inequality in the other direction, $\dim M= \dim H^*(M)$. 
\end{proof}

\begin{prop}\label{10.3} Let $M  \in \cM_\oll$ be an 1-dimensional effective motive. Assume $\Ch^n(M)$  is nonzero modulo homological equivalence. Then $M\cong \BL^{\otimes n}$, where $\BL$ is the Lefschetz motive.
\end{prop}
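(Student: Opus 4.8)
The plan is to combine the structure theory of finite-dimensional motives with the classification of effective motives of low dimension. Since $M$ is $1$-dimensional and effective and finite-dimensional (every relevant motive here is a factor of a motive of a variety dominated by products of curves), I would first examine the cohomology of $M$. By \Cref{dimH}, if $M$ had cohomology concentrated entirely in even degree or entirely in odd degree, then $\dim M = \dim H^*(M)$, so $\dim H^*(M) = 1$, meaning $H^*(M)$ is a one-dimensional vector space sitting in a single degree. The hypothesis that $\Ch^n(M)$ is nonzero modulo homological equivalence forces $H^{2n}(M) \neq 0$ (the cycle class map into $H^{2n}$ must be nonzero), so the single cohomology degree is $2n$ and $H^*(M) \cong H^{2n}(\BL^{\otimes n})$. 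The first task, then, is to rule out the possibility that $H^*(M)$ has both an even-degree and an odd-degree part; I expect this to follow from the $1$-dimensionality constraint together with \cite[Proposition 3.9]{Kim} (the inequality $\dim M \geq \dim H^{\mathrm{even}}(M) + \dim H^{\mathrm{odd}}(M)$ for the two-variable notion, or the analysis of $\dim$ via the split into positive and negative parts $M_+ \oplus M_-$), since a mixed-parity $M$ of positive even and odd cohomology would have dimension at least $2$.

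Having reduced to $H^*(M) \cong \oll$ placed in degree $2n$, the next step is to upgrade this cohomological statement to a motivic isomorphism $M \cong \BL^{\otimes n}$. I would consider the motive $N = M \otimes \BL^{\otimes(-n)}$ (a priori only a Chow motive, not necessarily effective, but still finite-dimensional since finite-dimensionality is stable under tensoring with $\BL^{\pm 1}$). Then $H^*(N) \cong \oll$ in degree $0$, i.e. $H^*(N) = H^*(\mathbf{1})$, the cohomology of the unit motive. The goal becomes $N \cong \mathbf{1}$. For this I would look at $M' := N \oplus \mathbf{1}[1]$-type constructions, or more directly: the identity in $H^0(N)$ corresponds, via $\operatorname{Hom}_{\cM_\oll}(\mathbf{1}, N) \to \operatorname{Hom}(H^*(\mathbf{1}), H^*(N))$ — which for finite-dimensional motives is well-behaved — to morphisms $f\colon \mathbf{1} \to N$ and $g\colon N \to \mathbf{1}$ with $g \circ f = \mathrm{id}_{\mathbf{1}}$ up to a nonzero scalar (using that homological equivalence and numerical equivalence interact with nilpotence of the kernel). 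Then $e = f \circ g$ is an idempotent in $\operatorname{End}(N)$ splitting off a copy of $\mathbf{1}$, and $\mathrm{id}_N - e$ is an idempotent with $H^*(\mathrm{id}_N - e) = 0$; by \Cref{dim0} applied to the motive $(N, \mathrm{id}_N - e)$ — once one checks it is effective, which is where care is needed since $N$ itself need not be — that complementary summand vanishes, giving $N \cong \mathbf{1}$.

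The cleanest route to handle the effectivity subtlety is probably to avoid passing to $N$ altogether and argue directly with $M$: from $H^*(M) = H^{2n}(M) \cong \oll$ and $\Ch^n(M) \neq 0$ mod homological equivalence, pick a cycle class $\gamma \in \Ch^n(M) = \operatorname{Hom}_{\cM_\oll}(\BL^{\otimes n}, M)$ whose cohomology class generates $H^{2n}(M)$, and dually a class $\gamma' \in \operatorname{Hom}_{\cM_\oll}(M, \BL^{\otimes n})$ pairing nontrivially with it (such a $\gamma'$ exists because $\BL^{\otimes n}$ is a direct summand of the motive of $\BP^n$, hence of a product of curves, so Poincaré-duality-type pairings are available and nondegenerate on the homological realization). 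Rescaling, $\gamma' \circ \gamma$ acts as the identity on $H^*(\BL^{\otimes n})$; since $\operatorname{End}(\BL^{\otimes n}) = \oll$ and the homological realization $\operatorname{End}(\BL^{\otimes n}) \to \operatorname{End}(H^*(\BL^{\otimes n}))$ is an isomorphism, $\gamma' \circ \gamma = \mathrm{id}_{\BL^{\otimes n}}$ on the nose. Hence $\BL^{\otimes n}$ is a direct summand of $M$; writing $M \cong \BL^{\otimes n} \oplus M''$ with $M''$ effective and finite-dimensional and $H^*(M'') = 0$, \Cref{dim0} gives $M'' = 0$, so $M \cong \BL^{\otimes n}$.

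The main obstacle I anticipate is the first step — showing the cohomology of a $1$-dimensional effective finite-dimensional motive cannot be spread across both parities — together with the bookkeeping needed to guarantee that the complementary summand to $\BL^{\otimes n}$ inside $M$ is genuinely effective so that \Cref{dim0} applies; both are handled by Kimura's dimension inequalities (\cite[Propositions 3.9, 6.9]{Kim}) and the standard fact that direct summands of effective finite-dimensional motives are effective, but they require citing the two-variable refinement of $\dim$ and its relation to $H^{\mathrm{even}}$ and $H^{\mathrm{odd}}$ carefully.
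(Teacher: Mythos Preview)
Your strategy is Kimura's, which is exactly what the paper invokes (its proof is a one-line reference to \cite[Proposition 10.3]{Kim}, noting only that the Hodge-conjecture hypothesis there is replaced by the present one). The obstacles you single out at the end are not the real ones: that $H^*(M)$ sits in a single parity follows immediately from Kimura's decomposition $M\cong M_+\oplus M_-$ with $\dim M_+ + \dim M_- = \dim M = 1$, and $H^{2n}(M)\neq 0$ then forces $M_- = 0$; and any direct summand of an effective motive $(X,p)$ is again of the form $(X,p')$, hence effective by definition.

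The genuinely delicate step is the one you pass over too quickly: producing $\gamma'\in\Hom_{\cM_\oll}(M,\BL^{\otimes n})$ with $\gamma'\circ\gamma\neq 0$. Your parenthetical appeal to ``Poincar\'e-duality-type pairings \ldots\ nondegenerate on the homological realization'' yields only a cohomology class, not a morphism of Chow motives; promoting that class to an actual correspondence is precisely a standard-conjecture statement, and it is exactly here that Kimura's original argument invokes the Hodge conjecture. The whole point of the paper's hypothesis is that it supplies $\gamma$ outright, so that the Hodge conjecture is not needed to manufacture it --- but one must then still argue, from finite-dimensionality alone, that the companion $\gamma'$ exists. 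That is the content you are missing. Once $\gamma'$ is secured, your splitting argument via \Cref{dim0} is correct.
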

\begin{proof}
The proof is the same as \cite[Proposition 10.3]{Kim}. Note that our  assumption  
replaces the assumption on Hodge conjecture in loc. cit..
\end{proof}

  \begin{cor} \label{prevan2}   
   Let $z \in \Ch^2(X^2)$.
   Let $p \in \Corr(X,X)$ be a projector. 

   (1) Assume  $ p ^t =p $ and   $\dim p_*H^*(X)\leq 2$. If  $z$ is invariant by $S_2$, then    $ (p\times p)_*    z  =0  $

(2) Assume  $  \dim  p _*H^*(X)=\dim  p ^t_*H^*(X) \leq 1$.  Then     $ (p^t\times p)_*    z  =0  $. 
Moreover, if $z$ is invariant by $S_2$, then $ (p\times p)_*    z  =0  $.
 \end{cor}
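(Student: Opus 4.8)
The plan is to reduce everything to the motivic statements already in place, namely \Cref{prevan0}, \Cref{10.3} and \Cref{dim0}, applied to the submotive $(X,p)$ and its symmetric/exterior powers. Throughout, write $\delta_p = p$ viewed as a projector, so that $H^*(X,p) = p_*H^*(X)$ is concentrated in degree $1$ (since $p$ is a projector on the motive of a curve, its cohomology lies in $H^0 \oplus H^1 \oplus H^2$, and the hypotheses on dimension will force the even part to vanish — more on this below). The key translation is \Cref{pushcomp2}: for a projector $p$ one has $(p \times p)_* z = p \circ z \circ p^t$ and $(p^t \times p)_* z = p \circ z \circ p$, so the two pushforward operators in the statement are honest composition operators on $\Ch^2(X^2)$, and they factor through the motive $\Sym^2(X,p)$ or $(X,p)^{\otimes 2}$ respectively.

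For part (1): since $p^t = p$, the correspondence $p \times p$ on $X^2$ is invariant under the swap $S_2$, so if $z$ is also $S_2$-invariant, $(p\times p)_* z$ is a class on $X^2$ that is detected by $\Ch^*(\Sym^2(X,p))$. Now $\dim p_* H^*(X) \le 2$, and because $p$ is symmetric the trace/cohomology argument (or simply the hypothesis together with the structure of $H^*$ of a curve) shows $p_*H^*(X)$ is purely of odd weight, i.e. $\dim p_*H^1(X)\le 2$. The plan is then to invoke \Cref{prevan0} directly with $n=2$: the hypothesis $\dim p_* H^*(X) < n$ is exactly $\dim p_*H^*(X) \le 1$... here we only have $\le 2 = n$, so \Cref{prevan0} does not literally apply. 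Instead I would argue via \Cref{10.3} (and \Cref{dimH}): the motive $M = \Sym^2(X,p)$ is effective and finite-dimensional, with $H^*(M) = \wedge^2 p_*H^1(X)$ of dimension $\le 1$; if it is $1$-dimensional then by \Cref{10.3}, if $\Ch^2(M)$ were nonzero modulo homological equivalence we would get $M \cong \BL^{\otimes 2}$, but $\Sym^2$ of a curve motive piece in $h^1$ cannot contain $\BL^{\otimes 2}$ for weight reasons — and if $H^*(M) = 0$ then \Cref{dim0} gives $M = 0$. Either way the degree-$2$ Chow group of $M$ vanishes (the homologically trivial part by finite-dimensionality, the rest by the weight obstruction), so $(p\times p)_* z = 0$.

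For part (2): now $\dim p_*H^*(X) = \dim p^t_* H^*(X) \le 1$. The operator $z \mapsto p \circ z \circ p$ factors through $(X,p^t) \otimes (X,p)$, whose cohomology is $p^t_*H^*(X) \otimes p_*H^*(X)$, of total dimension $\le 1$. Arguing as above with \Cref{10.3}: $N = (X,p^t)\otimes(X,p)$ is finite-dimensional effective with $\dim H^*(N) \le 1$; its degree-$2$ Chow group is killed by \Cref{dim0} (homologically trivial part) and by the weight obstruction in \Cref{10.3} ($N \cong \BL^{\otimes 2}$ is impossible since $N$ sits in weight $\le 2$ coming from $h^1 \otimes h^1$ and has $H^*$ in a single odd-ish slot, not in $H^4$). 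Hence $(p^t \times p)_* z = 0$ for every $z \in \Ch^2(X^2)$. For the ``moreover'': when $z$ is $S_2$-invariant we can instead work with $\Sym^2(X,p)$ whose cohomology $\wedge^2 p_*H^1(X)$ is now $0$ (since $\dim p_*H^1(X) \le 1$), so $\Sym^2(X,p) = 0$ by \Cref{dim0}, and therefore $(p\times p)_* z = 0$ as well — indeed this special case is exactly \Cref{prevan0} with $n = 2$.

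The main obstacle I anticipate is the borderline case in part (1) where $\dim p_*H^1(X) = 2$ exactly: here $\wedge^2 p_*H^1(X)$ is $1$-dimensional and of Tate type (it is a copy of $\BL$, not $\BL^{\otimes 2}$), so one must rule out that $\Sym^2(X,p)$ contains $\BL^{\otimes 2}$ as a summand — equivalently, that $\Ch^2$ of it is nonzero. This is precisely where \Cref{10.3} is designed to help, but applying it requires knowing that $\Ch^2(\Sym^2(X,p))$ modulo homological equivalence vanishes, which is the kind of input that \Cref{prevan1}(1) or an Abel--Jacobi/weight argument supplies; I would need to check that the weight of $\Sym^2 h^1$ is $2$ and hence cannot accommodate $\BL^{\otimes 2}$ (weight $4$), so in fact $\Ch^2(\Sym^2(X,p)) = \Ch^2(\Sym^2 h^1(X)\text{-part}) = 0$ outright and \Cref{dim0} finishes it. Making this weight bookkeeping precise, rather than hand-waving ``for weight reasons'', is the one place that needs genuine care.
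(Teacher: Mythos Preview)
Your overall architecture is right: pass to $M=\Sym^2(X,p)$ (resp.\ $N=(X,p^t)\otimes(X,p)$), compute $H^*$, and invoke Kimura finite-dimensionality. The problem is the borderline case $\dim H^*(M)=1$, which you correctly isolate but then handle incorrectly.

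Your argument there runs: if $\Ch^2(M)\neq 0$ modulo homological equivalence, then \Cref{10.3} gives $M\cong\BL^{\otimes 2}$, contradicting $H^*(M)\subset H^2$. That step is fine, but its conclusion is only that $\Ch^2(M)=0$ \emph{modulo homological equivalence}. You then claim that ``the homologically trivial part [vanishes] by finite-dimensionality'' and that ``\Cref{dim0} finishes it.'' Neither holds. Finite-dimensionality alone does not kill homologically trivial cycles (that is essentially a form of Bloch--Beilinson), and \Cref{dim0} requires $H^*(M)=0$, which fails precisely in the case under discussion. So as written you have not shown $(p\times p)_*z=0$ in the Chow group.

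The paper closes this gap by applying \Cref{10.3} with $n=1$ rather than $n=2$. One exhibits a class in $\Ch^1(M)$ that is nonzero modulo homological equivalence; then \Cref{10.3} forces $M\cong\BL$, and $\Ch^2(\BL)=0$ gives the desired vanishing outright. The required class is supplied by \Cref{prevan1}(2): $(p^t\times p)_*[\Delta]$ is nonzero modulo homological equivalence, and (since $p^t=p$ and $\Delta$ is symmetric) it lies in $\Ch^1(M)$. The same input handles the first assertion of (2) with $M$ replaced by $N$. So the missing idea is to use \Cref{prevan1}(2) to \emph{identify} $M\cong\BL$, not merely to exclude $M\cong\BL^{\otimes 2}$.

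Your argument for the ``moreover'' in (2), via $H^*(\Sym^2(X,p))=\wedge^2 p_*H^1(X)=0$ and \Cref{dim0}, is correct and is essentially \Cref{prevan0} with $n=2$; the paper instead reduces to part~(1) applied to $p+p^t$ and then expands.
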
 
 
  \begin{proof}  
   (1)
 Let $M=\Sym^2  (X,p)$. Similar to \eqref{Hm}, $\dim  H^*(M)\leq 1$. 
 If $\dim  H^*(M)=0$, then  the proposition follows from 
\Cref {dim0}. 
If  $H^*(M)$ is of dimension 1. 
Then by \Cref{dimH}, we have $\dim M=1$.   
Now (1) follows from  \Cref{prevan1} (2) and \Cref{10.3}.

(2) The proof of the first part is similar to (1) with $M=  (X,p^t) \times (X,p)$ and using \Cref{prevan1} (2).
For the second part, by (1) applied to $p+p^t$,
$ \lb(p+p^t)\times (p+p^t)\rb_*    z  =0  $. Then the the second part follows from the first part.
    \end{proof} 
 
  \begin{rmk}\label{eg:2}
 (1) Let $X $ be an elliptic curve over $k=\BC$,  let   $e $  be the class of a point $P$ and let $z = [(Q,Q)]$ for a point $Q$. Then (1) of the  corollary, applied to $\delta$ says $$ (\delta\times \delta)_*   z =  [(P,P)-(P,Q)-(Q,P)+(Q,Q)] =0$$  in $ \Ch^2(X^2)$. 
  This is \cite[Lemma 2.5]{BV}.

  (2) On the other hand, for a very general point $P\in X^2$, from a classical result of Mumford \cite[p 203, Corollar]{Mun}, we can deduce that 
  $ (\delta\times \delta)_*  [P] \neq 0$. 
 If $X$ has complex multiplication and $\Omega=\BC$, then $\delta =p+p^t$ with $p,p^t$
 satisfies the condition in \Cref{prevan2} (2). 
And  we   deduce that 
  $ (p\times p)_*  [P] \neq 0$ for a very general point $P\in X^2$.

   (3) The first part of (2) of the  corollary still holds if we replace 
   $p^t$ by $p'^t$ for $p'$ such that there is a nontrivial morphism between $(X,p)$ and $(X,p')$.  For example, this is the case if they come from the same CM elliptic curve. 
 However, the second part fails.  Otherwise, similar to (1), one can deduce that 
 $   [(P,P)-(P,Q)-(Q,P)+(Q,Q)] =0$  in $ \Ch^2(X^2)$ for any $X$ whose Jacobian is isogenous to a power  of a  CM elliptic curve.  This contradicts \cite[Theorem 1]{GG}.

    \end{rmk}

    \subsection{Idempotents in endomorphism algebras}  Now we can start to construct the 
 projectors (i.e., idempotents) we will use.
  We start with idempotents in endomorphism algebras of $J$.

 For  an abelian variety   $A$ over $k$, since  $\End(A)_\BQ$ is a semisimple algebra over $\BQ$,
 $\End(A)_\oll$ is a semisimple algebra over  $\oll$. See
 \cite[(5.16)]{Lam}. 
 Let  $\iota$ be an irreducible central idempotent in  $\End(A)_\oll$, and denote the corresponding simple factor as
 \begin{equation*}\label{SSS}
 S_{A,\iota} :=\iota \circ \End(A)_\oll=  \End(A)_\oll\circ\iota=\iota \circ   \End(A)_\oll\circ\iota.
  \end{equation*} 
As $\iota$ lies in the center of $\End(A)_\oll$, one can easily check that its image  $\iota^\dag$
under a  Rosati involution 
  on $\End(A)_\oll $ associated  with a polarization does not depend on the choice of the polarization.

 Let   $A$ be a  simple isogeny factor of $J=\Pic^0_{X/k}$ of multiplicity $n$ (or any  abelian variety $J$ with a polarization   in the isogeny category of abelian varieties). 
 For a central idempotent $\iota $ of $\End(A)_\oll$, the $\oll$-module $$\pi_{A,\iota}:=\iota\circ \Hom(J,A)_\oll,\quad
 \text{resp.} \quad \wt\pi_{A,\iota}:=\Hom(A,J)_\oll\circ \iota$$ is a free left  resp. right $S_{A,\iota}$-module of rank $n$, and they are dual to each other.  
 Explicitly, fix a decomposition 
 $$J=A^n\times J'$$ in   the (semisimple) isogeny category abelian of varieties
 such that $\Hom(J',A)_\BQ=0$.
 It induces $S_{A,\iota}$-basis   $\cB_{A,\iota}$  of $\pi_{A,\iota}$, i.e., 
the $i$-the element of  $\cB_{A,\iota}$ is the projection of $J$ to the $i$-th $A$ post-composed with $\iota$, and we have an
  induced dual basis $\wt\cB_{A,\iota}$, i.e., the $i$-the element of   $\wt\cB_{A,\iota}$ is the embedding of $A$  onto the $i$-th $A$ pre-composed with $\iota$.  
The image of the embedding 
  \begin{equation} \label{pMd}
    \begin{split}
  \pi_{A,\iota}\otimes_{S_{A,\iota}} \wt \pi_{A,\iota}&\incl \End(J)_\oll, \\
  \phi\otimes \psi&\mapsto  \psi\circ \phi
 \end{split}
  \end{equation} 
  is identified with a simple factor
  \begin{equation} \label{Md}
    \RM_{n}(S_{A,\iota})\subset 
 \End(J)_\oll  \cong \RM_{n}(\End(A)_\oll)\oplus  \End(J')_\oll.
   \end{equation}

  \begin{lem}  \label{lem:res}

  (1)    The image of   $\RM_{n}(S_{A,\iota})$ under  the Rosati involution 
  on $\End(J)_\oll $ is $\RM_{n}(S_{A,\iota^\dag})$.

  (2) Assume that  $\oll$ is algebraically closed and $\End(A)_\BQ$ is a field.
  Further assume 
   $\iota^\dag = \iota$\footnote{Surely, by Albert's classification, 
$\End(A)_\BQ$  is a totally real field.}. Then  the Rosati involution on $S_{A,\iota}$ is trivial. And
 there exists an invertible symmetric matrix  $g \in  \RM_{n}(S_{A,\iota})$ such that  on  $ \RM_{n}(S_{A,\iota})$,
 the Rosati involution  is given $x\mapsto g^{-1}x^Tg$, where $x^T$ is the matrix transpose of $x$.
  \end{lem}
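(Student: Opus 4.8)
The plan is to argue entirely inside the semisimple $\oll$-algebra $\End(J)_\oll$, tracking how the Rosati involution $\dag$ attached to the canonical principal polarization $\theta$ of $J$ interacts with the isotypic decomposition and with extension of scalars from $\BQ$ to $\oll$. Write $J\sim J_A\times J'$ with $J_A\sim A^n$ the $A$-isotypic part and $\Hom(J',A)_\BQ=0$, so that $\End(J_A)_\oll\cong\RM_n(\End(A)_\oll)$ is the sum of the blocks $\RM_n(S_{A,\iota'})$ over the primitive central idempotents $\iota'$ of $\End(A)_\oll$, and $\RM_n(S_{A,\iota})=e_{A,\iota}\End(J)_\oll$ for the central idempotent $e_{A,\iota}$ which is the diagonal image of $\iota\in Z(\End(A)_\oll)$.

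For (1): since $\dag$ is an $\oll$-linear anti-automorphism it permutes the simple two-sided ideals of $\End(J)_\oll$ and restricts to an involution of the center, so it suffices to compute $e_{A,\iota}^\dag$. The first point is that $\dag$ preserves the $B$-isotypic block $\End(J_B)_\oll$ for each simple isogeny factor $B$ of $J$: because every abelian variety is isogenous to its dual, $\theta$ carries the $B$-isotypic part of $J$ onto that of $J^\vee$, and $\dag=\theta^{-1}\circ(-)^\vee\circ\theta$, so $\dag$ maps $\End(J_B)_\oll=\RM_{n_B}(\End(B)_\oll)$ into itself. The second point is that the restriction of $\dag$ to $\End(J_B)_\oll$ is a positive anti-involution (the Rosati of the restricted polarization), hence restricts on the center $Z(\End(B)_\oll)$ to the $\oll$-linear extension of the canonical polarization-independent involution of $Z(\End(B)_\BQ)$ --- which is exactly the map used to define $\iota^\dag$. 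Applying this with $B=A$ gives $e_{A,\iota}^\dag=e_{A,\iota^\dag}$, whence $\dag(\RM_n(S_{A,\iota}))=e_{A,\iota^\dag}\End(J)_\oll=\RM_n(S_{A,\iota^\dag})$.

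For (2): put $L=\End(A)_\BQ$, a number field by hypothesis, so $\End(A)_\oll=L\otimes_\BQ\oll\cong\prod_{\sigma\colon L\hookrightarrow\oll}\oll$ and $\iota=\iota_\sigma$ cuts out one factor. The Rosati on $\End(A)_\oll$ is the $\oll$-linear extension of the canonical involution $c$ of $L$ (the identity if $L$ is totally real, complex conjugation if $L$ is CM), and a direct computation gives $\iota_\sigma^\dag=\iota_{\sigma\circ c}$; hence $\iota^\dag=\iota$ forces $\sigma\circ c=\sigma$, i.e. $c=\id$ on $L$, so $L$ is totally real (this is the footnoted assertion) and the Rosati on $L$, hence on $S_{A,\iota}=\iota\,\End(A)_\oll\cong\oll$, is trivial. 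For the matrix statement, by (1) the Rosati preserves the block $\RM_n(S_{A,\iota})$, and it is obtained by base change to the $\sigma$-factor from the restriction of $\dag$ to $\End(J_A)_\BQ$, a positive $\BQ$-linear anti-involution of $\RM_n(L)$ trivial on the center $L$. By Skolem--Noether such an anti-involution has the form $x\mapsto g_0^{-1}x^Tg_0$ with $g_0\in\GL_n(L)$ and $g_0^T=\pm g_0$, and positivity of the Rosati excludes the minus sign, so $g_0=g_0^T$; then $g:=\sigma(g_0)\in\GL_n(\oll)=\GL_n(S_{A,\iota})$ is invertible and symmetric and implements the Rosati on $\RM_n(S_{A,\iota})$ as $x\mapsto g^{-1}x^Tg$.

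The step I expect to need the most care is the interface between ``$\dag$ restricted to the $A$-block'' and ``the Rosati of $\End(A)_\oll$'' defining $\iota^\dag$: the restricted polarization need not be diagonal, so these two anti-involutions of $\End(A)_\oll$ need not coincide, but they agree on $Z(\End(A)_\oll)$ (polarization-independence on the center, plus Albert's description of positive involutions on a division algebra), which is all (1) uses, and in (2) the relevant anti-involution of $\RM_n(L)$ is pinned down only up to $\GL_n(L)$-conjugacy anyway. The other delicate point is the positivity step in (2): positivity of the $\BQ$-linear Rosati on $\RM_n(L)$ must be converted, via weak approximation over the real places of the totally real field $L$, into positivity at each real place, and it is this real positivity that forces the orthogonal type $g_0^T=g_0$ rather than the symplectic one --- the content of Albert's classification of positive involutions on matrix algebras over a totally real field.
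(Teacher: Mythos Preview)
Your proof is correct and, for part (2), actually more complete than the paper's. The paper's proof of (1) is just ``check by definition,'' which your argument fleshes out correctly via the isotypic decomposition and polarization-independence on the center. For (2) the paper takes a shorter route to triviality of the Rosati on $S_{A,\iota}$: since $\oll$ is algebraically closed and $\End(A)_\BQ$ is a field, $S_{A,\iota}\cong\oll$ is one-dimensional over $\oll$, and the Rosati is $\oll$-linear, hence the identity on $S_{A,\iota}$. You instead deduce triviality by showing $L=\End(A)_\BQ$ is totally real, which is longer but has the virtue of also proving the footnoted assertion.

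The more substantive difference is how you get the matrix $g$ to be \emph{symmetric}. The paper applies a Skolem--Noether variant directly over $\oll$ to obtain $x\mapsto g^{-1}x^Tg$, but does not spell out why $g^T=g$ rather than $g^T=-g$. You work first over $\BQ$, on $\RM_n(L)$, where the restricted Rosati is a \emph{positive} involution trivial on the center, and invoke Albert's classification to rule out the symplectic type; then you base-change along $\sigma$. This detour through $\BQ$-coefficients is exactly what is needed to pin down the sign, and your closing remarks identify the right subtleties (agreement only on the center, and positivity at real places). So your route is a bit longer but closes a gap the paper leaves implicit.
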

 \begin{proof} 
 (1)    Check by definition.
 
 (2)  Let $S=S_{A,\iota}\cong \oll$ by our assumptions.  The Rosati involution is an  involution of $\oll$-algebra, and thus is  trivial on $S_{A,\iota}$.
Now (2) follow from the following variant of the Skolem--Noether theorem (applied to the matrix algebra): an involution of the   matrix algebra  $\RM_{n}(S)$  for any field $S$
 has the follwing form
$x\mapsto g^{-1}\ol x^Tg$, where $x\mapsto \ol x$ is an involution of $S$.
 \end{proof}

  \subsection{Idempotents in correspondence algebras I}\label{Field extensionI}

 Let   $A$ be a  simple isogeny factor of $J=\Pic^0_{X/k}$ of multiplicity $n$ as above.
Let $ \iota_J\in \End(J)_\oll$ be the   idempotent corresponding to the  simple factor
  $ \RM_{n}(S_{A,\iota})\subset 
 \End(J)_\oll  $. See \eqref{Md}.
 Let $$ \delta_{A,\iota}  \in\DC^1(X^2)\cong \End(J)_\oll$$  correspond to $ \iota_J$, where the last isomorphism is
  \eqref{Sch3.9}. 
  Then  \begin{equation}\label{deltai}
  \delta_{A,\iota}\circ \delta_{A,\iota}=\delta_{A,\iota},
   \end{equation}  
   and  
      \begin{align} \label{deltaii}\delta _{A,\iota}\circ  \DC^1(X^2)\circ \delta _{A,\iota}=\delta _{A,\iota}\circ  \DC^1(X^2).
    \end{align}
  By \Cref{lem:res} (1), we have
  \begin{equation}\label{deltat}
  \delta_{A,\iota} ^t=  \delta_{A,\iota^\dag}. 
 \end{equation}  
 
 Since the identity of $ \DC^1(X^2)$ is  $\delta $, we have \begin{equation}\label{dCd=dDd}
 \begin{split}
 (\delta_{A_1,\iota_1}\times \delta_{ A_2,\iota_2})_* \Ch^1(X^2)&= (\delta_{A_1,\iota_1}\times \delta_{ A_2,\iota_2})_*(\delta \times \delta )_*  \Ch^1(X^2)\\
 &=(\delta_{A_1,\iota_1}\times \delta_{ A_2,\iota_2})_*\DC^1(X^2) \\
 &=\delta_{A_2,\iota_2}\circ  \DC^1(X^2)\circ\delta_{ A_1,\iota_1^\dag},
\end{split}
\end{equation}
where for the last ``=", we applied \Cref{pushcomp2} and \eqref{deltat}.

 We write $(A_1,\iota_1)\sim ( A_2,\iota_2)$ if there is a quasi-isogeny $f$  from $A_1$ from $A_2$ that sends the central idempotent $\iota_1$  to the central idempotent  $ \iota_2$, i.e., let $f^{-1}$ be the quasi-inverse of $f$, then $\iota_2=f\circ \iota_1\circ f^{-1}$. This defines an equivalence relation among such pairs.
Then $\delta_{A_1,\iota_1}= \delta_{ A_2,\iota_2}$ if and only if $(A_1,\iota_1)\sim ( A_2,\iota_2)$. 
 And for $(A_1,\iota_1)\not\sim ( A_2,\iota_2)$, we have 
   \begin{equation}\label{5terms1} 
    \delta_{A_1,\iota_1}\circ  \DC^1(X^2)\circ\delta_{ A_2,\iota_2}=0.
     \end{equation}   
 Moreover, the sum of $\delta_{A,\iota}$'s up to the equivalence "$ \sim $" is $\delta$, i.e.,
  \begin{equation}\label{eqsum1} 
    \sum_{\{(A,\iota)\}/\sim} \delta_{A,\iota} =\delta.
     \end{equation}  
  
    \begin{lem} \label{algeprop} 
   
 (1)  Assume that $(A_1,\iota_1^\dag)\not\sim( A_2,\iota_2)$, then 
  $$(\delta_{A_1,\iota_1}\times \delta_{ A_2,\iota_2})_* \Ch^1(X^2)=0.$$
   
   (2)  For $a_1,...,a_n$ and $b_1,...,b_n$ in  $\DC^1(X^2)$ such that 
  $\sum_{i=1}^n  b_i\circ a_i^t =\delta _{A,\iota}$, we have 
\begin{align*}(\delta _{A,\iota^\dag}\times \delta _{A,\iota})_* \Ch^1(X^2)
  =(\delta _{A,\iota^\dag}\times \delta _{A,\iota})_* \DC^1(X^2)
  =\lb  \DC^1(X^2) \times \delta_{A,\iota} \rb_*   \lb \sum_{i=1}^n( a_i\times b_i)_*[\Delta]\rb.    \end{align*}

      \end{lem}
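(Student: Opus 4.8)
\emph{Proof proposal.} The statement is formal, and the plan is to translate every pushforward along a product of correspondences into a composition in the correspondence algebra via Lieberman's lemma (\Cref{pushcomp2}, in the form $(a\times b)_*c=b\circ c\circ a^t$), and then to run through the idempotent relations \eqref{deltai}, \eqref{deltaii}, \eqref{deltat}, the orthogonality \eqref{5terms1}, and the fact that $[\Delta]$ is the identity of $\Corr(X,X)$ while $\delta$ is the identity of the subalgebra $\DC^1(X^2)$. Throughout one must keep these two distinct units apart and check that all cycles that appear stay inside the transpose-stable subalgebra $\DC^1(X^2)$, so that the idempotent relations are legitimately applicable.

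For (1): apply \eqref{dCd=dDd} to the pair $(A_1,\iota_1),(A_2,\iota_2)$ to get $(\delta_{A_1,\iota_1}\times\delta_{A_2,\iota_2})_*\Ch^1(X^2)=\delta_{A_2,\iota_2}\circ\DC^1(X^2)\circ\delta_{A_1,\iota_1^\dag}$. Since $\sim$ is symmetric, the hypothesis $(A_1,\iota_1^\dag)\not\sim(A_2,\iota_2)$ is the same as $(A_2,\iota_2)\not\sim(A_1,\iota_1^\dag)$, so \eqref{5terms1} makes the right side vanish. This is immediate.

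For (2): the first equality will follow from \Cref{pushcomp3}. One inclusion is trivial since $\DC^1(X^2)\subseteq\Ch^1(X^2)$; for the other, composing two pushforwards along products gives $(p_1\times q_1)_*(p_2\times q_2)_*=((p_1\circ p_2)\times(q_1\circ q_2))_*$ (apply \Cref{pushcomp2} twice, using $(u\circ v)^t=v^t\circ u^t$), so taking $p_2=q_2=\delta$ and using $\delta_{A,\iota^\dag}\circ\delta=\delta_{A,\iota^\dag}$, $\delta_{A,\iota}\circ\delta=\delta_{A,\iota}$ we get $(\delta_{A,\iota^\dag}\times\delta_{A,\iota})_*=(\delta_{A,\iota^\dag}\times\delta_{A,\iota})_*\circ(\delta\times\delta)_*$ on $\Ch^1(X^2)$; now \Cref{pushcomp3} identifies $(\delta\times\delta)_*\Ch^1(X^2)$ with $\DC^1(X^2)$. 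For the second equality I would compute both sides down to $\delta_{A,\iota}\circ\DC^1(X^2)$. On the left, \Cref{pushcomp2} together with \eqref{deltat} gives $(\delta_{A,\iota^\dag}\times\delta_{A,\iota})_*\DC^1(X^2)=\delta_{A,\iota}\circ\DC^1(X^2)\circ\delta_{A,\iota}$, which is $\delta_{A,\iota}\circ\DC^1(X^2)$ by \eqref{deltaii}. On the right, since $[\Delta]$ is the identity of $\Corr(X,X)$, \Cref{pushcomp2} gives $(a_i\times b_i)_*[\Delta]=b_i\circ a_i^t$, an element of $\DC^1(X^2)$ because that subalgebra is transpose-stable; hence $\sum_i(a_i\times b_i)_*[\Delta]=\sum_i b_i\circ a_i^t=\delta_{A,\iota}$ by hypothesis, and then $(c\times\delta_{A,\iota})_*\delta_{A,\iota}=\delta_{A,\iota}\circ\delta_{A,\iota}\circ c^t=\delta_{A,\iota}\circ c^t$ by \Cref{pushcomp2} and \eqref{deltai}; as $c$ runs over $\DC^1(X^2)$ so does $c^t$, so the right side is also $\delta_{A,\iota}\circ\DC^1(X^2)$.

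I expect no real obstacle: the proof is pure bookkeeping. The only genuinely error-prone points are the transpose swap when composing product-pushforwards, the need to apply the idempotent identities only to cycles already inside $\DC^1(X^2)$, and the clean separation of the two units $[\Delta]$ and $\delta$. I should also flag that the existence of $a_i,b_i\in\DC^1(X^2)$ with $\sum_i b_i\circ a_i^t=\delta_{A,\iota}$ is not proved here; it is taken as input, to be produced from a matrix-unit decomposition of the idempotent $\iota_J\in\RM_n(S_{A,\iota})$ transported to $\DC^1(X^2)$ via \eqref{Sch3.9'}, so that the lemma is really the statement that such a decomposition computes $(\delta_{A,\iota^\dag}\times\delta_{A,\iota})_*\Ch^1(X^2)$ in terms of the geometric class $[\Delta]$.
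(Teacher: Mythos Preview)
Your proposal is correct and follows the same route as the paper: reduce everything via \Cref{pushcomp2} to compositions in $\DC^1(X^2)$, use \eqref{deltat} to turn $\delta_{A,\iota^\dag}^t$ into $\delta_{A,\iota}$, and then simplify both sides of (2) to $\delta_{A,\iota}\circ\DC^1(X^2)$ using \eqref{deltai} and \eqref{deltaii}. The only cosmetic difference is that for the first equality in (2) the paper simply cites the chain \eqref{dCd=dDd}, which already contains the passage from $\Ch^1(X^2)$ to $\DC^1(X^2)$ that you re-derive by hand via \Cref{pushcomp3}.
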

       \begin{proof} 
 (1) follows from  \eqref{dCd=dDd} and  \eqref{5terms1}.
 By (1),  \eqref{dCd=dDd}  and \eqref{deltaii}, 
  the first equation in (2)  holds, and further equals 
$\ \delta _{A,\iota} \circ  \DC^1(X^2)$.  
For the right hand side, 
by    \Cref{pushcomp2}, we have  $$ \sum_{i=1}^n( a_i\times b_i)_*[\Delta]= \delta _{A,\iota} .$$ 
By     \Cref{pushcomp2} again (and that $\DC^1(X^2)$ is   stable under   transposition), the right hand side is    $ \delta _{A,\iota} \circ    \delta _{A,\iota}  \circ \DC^1(X^2).$ 
 Now (3) follows from \eqref{deltai}. 
  \end{proof}
  
    \subsection{Idempotents in correspondence algebras II}\label{Field extensionII}

We further orthogonally  decompose $\delta_{A,\iota}$  using the  isomorphisms \eqref{alb}  and \eqref{dualh}.  

 Let $\cB_{A,\iota}$ be an $S_{A,\iota}$-basis of $\pi_{A,\iota}$ 
 and  $\wt\cB_{A,\iota}$ the dual basis of $\wt\pi_{A,\iota}$ as in the last subsection.
     For $\phi\in \cB_{A,\iota}$, let  $\wt \phi \in  \wt\pi_{A,\iota}$ be  the dual vector in the  dual basis.
  Then $ \wt \phi\circ \phi \in \End(J)_\oll$   is an idempotent. Explicitly, if  $\phi$ is the $i$-th element of $\cB_{A,\iota}$, then $ \wt \phi\circ \phi$ is the diagonal matrix with 1 on the $i$-th entry and $0$ elsewhere  of \eqref {Md}.
  Let $\delta_{\phi}\in \DC^1(X^2) $
correspond to   $ \wt \phi\circ \phi \in \End(J)_\oll$
 under the  isomorphism \eqref{Sch3.9'}.
   (An explicit construction of $\delta_{\phi}$ can be found in \cite[Section 2.5]{QZ2}.)  Then  
    we  have some properties of $\delta_{\phi}$ analogous to \eqref{deltai}, \eqref{5terms1}  and \eqref{eqsum1}
   of $\delta_{A,\iota}$.
    First,
    \begin{equation}\label{ddd}
    \delta_{\phi}\circ \delta_{\phi}=\delta_{\phi}.
     \end{equation}  
      And if $\phi_1\neq \phi_2\in \cB_{A,\iota}$ or $\phi_i \in \cB_{A_i,\iota_i}$ with 
      $(A_1,\iota_1)\not\sim ( A_2,\iota_2)$,
     then 
         \begin{equation}\label{dd0}
          \delta_{\phi_1}\circ\delta_{\phi_2}=0.
          \end{equation} 
 Moreover, 
           \begin{equation}\label{eqsum2} 
    \sum_{\phi\in \cB_{A,\iota}}  \delta_\phi=\delta_{A,\iota}.
     \end{equation}  
Besides,  by definition and Poincar\'e duality, we have  
  \begin{equation}\label{lem:coh}
  \delta_{\phi,*}H^*(X)= \delta_{\phi,*} H^1(X) \cong \iota _*H^1(A).
 \end{equation}

 We do not have analogs of \eqref{deltaii} and \eqref{dCd=dDd}, and not have \eqref{deltat} either. But we will study transpose of $\delta_\phi$ in \Cref{lem:norm}.
    
  \subsection{Vanishing results} \label{Vanishing results} 
In  this subsection, assume that $\oll$ is algebraically closed and $\End(A)_\BQ$ is a field.  
  \begin{prop} \label{prop:van0}   
  Let the symmetric group $S_n$ act on $X^n$ by permuting the components. 
  Assume  $2\dim A<   n[\End(A)_\BQ:\BQ]$.   Then for $z \in \Ch^*(X^n)$ invariant by $S_n$, 
  $$ (\delta_\phi^{ n})_* z  =0  $$  for $\phi \in \cB_{A,\iota}$. 
 \end{prop}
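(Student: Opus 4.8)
The plan is to deduce \Cref{prop:van0} directly from \Cref{prevan0}, applied with the projector $p=\delta_\phi$. Concretely, the two things that need to be checked are: (i) that $\delta_\phi\in\Corr(X,X)$ is a projector, and (ii) that the numerical hypothesis $2\dim A<n[\End(A)_\BQ:\BQ]$ is exactly the hypothesis $\dim\delta_{\phi,*}H^*(X)<n$ required in \Cref{prevan0}. Point (i) is immediate from \eqref{ddd}. So the whole argument reduces to the dimension count in (ii), after which \Cref{prevan0} produces the conclusion $(\delta_\phi^{n})_*z=0$ for every $S_n$-invariant $z$.

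For the dimension count I would start from \eqref{lem:coh}, which gives $\delta_{\phi,*}H^*(X)=\delta_{\phi,*}H^1(X)\cong\iota_*H^1(A)$ (in particular $\delta_\phi$ kills $H^0$ and $H^2$ since it lies in $\DC^1$). Thus it suffices to show $\dim_\oll\iota_*H^1(A)=2\dim A/[\End(A)_\BQ:\BQ]$. Write $E=\End(A)_\BQ$, a number field of degree $d=[E:\BQ]$. Since $\oll$ is algebraically closed of characteristic $0$, $\End(A)_\oll=E\otimes_\BQ\oll\cong\oll^{\,d}$, whose irreducible central idempotents are precisely the $d$ coordinate projections, and $\iota$ is one of them. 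Now $H^1(A)$ is a free $\End(A)_\oll$-module of rank $2\dim A/d$ — this is the standard fact that the first cohomology of an abelian variety is free of rank $2\dim A/[E:\BQ]$ over its endomorphism algebra $E$ (valid because $E$ is a field), extended by scalars to $\oll$. Hence each $\iota_*H^1(A)$ has $\oll$-dimension $2\dim A/d$, and $2\dim A<nd$ is exactly $\dim\delta_{\phi,*}H^*(X)<n$.

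Feeding (i) and (ii) into \Cref{prevan0} with $p=\delta_\phi$ gives $(\delta_\phi^{n})_*z=0$ for every $S_n$-invariant $z\in\Ch^*(X^n)$, which is the assertion. For completeness, recall that internally \Cref{prevan0} works because $M=\Sym^n(X,\delta_\phi)$ is a direct factor of the motive of $X^n$, hence finite-dimensional and effective, and $H^*(M)=\wedge^n\delta_{\phi,*}H^1(X)=0$ once $\dim\delta_{\phi,*}H^1(X)<n$, so $M=0$ by \Cref{dim0}, whereupon an $S_n$-invariant cycle maps into $\Ch^*(M)=0$. I do not expect a real obstacle here: beyond invoking \eqref{lem:coh}, \Cref{prevan0}, and Kimura's finite-dimensionality (via \Cref{dim0}), the only step carrying content is the elementary identity $\dim_\oll\iota_*H^1(A)=2\dim A/[\End(A)_\BQ:\BQ]$, which rests on the standard freeness of $H^1$ of an abelian variety over its (field) endomorphism algebra together with the splitting $\End(A)_\oll\cong\oll^{\,d}$ guaranteed by $\oll$ being algebraically closed. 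In effect \Cref{prop:van0} is a repackaging of the cohomological identity \eqref{lem:coh} together with the finite-dimensionality machinery, specialized to the case where cutting $h^1(X)$ by $\delta_\phi$ leaves a piece of cohomological dimension at most $n-1$.
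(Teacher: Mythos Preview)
Your proposal is correct and follows exactly the paper's approach: use \eqref{lem:coh} to identify $\delta_{\phi,*}H^*(X)\cong\iota_*H^1(A)$, compute its $\oll$-dimension as $2\dim A/[\End(A)_\BQ:\BQ]$ (using the standing assumptions of the subsection that $\oll$ is algebraically closed and $\End(A)_\BQ$ is a field), and then invoke \Cref{prevan0}. You have simply spelled out the dimension count that the paper leaves to the reader.
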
 
 
 \begin{proof}
 
Since $2\dim A< n[\End(A)_\BQ:\BQ]$,  $\oll$ is algebraically closed and $\End(A)_\BQ$ is a field, we have $\dim   \delta_{\phi,*} H^*(X)= \dim \iota _*H^{1}(A)<n$ by \eqref{lem:coh}.
  Apply \Cref{prevan0}.
\end{proof}

  When $n=2$, the condition $\dim A<   [\End(A)_\BQ:\BQ]$ of the  proposition  is too restrictive.  
We will prove some results in this direction when $\dim A=    [\End(A)_\BQ:\BQ]$, i.e., $A$ is of $\GL_2$-type. See \Cref{prop:van1}. We need some preparations.

  \begin{lem}\label{lem:norm} Assume $\iota^\dag = \iota$. Then  there exists a choice of the basis $\cB_{A,\iota}$ such that 
  $$\delta_{\phi}=\delta_{\phi}^t.$$
 \end{lem}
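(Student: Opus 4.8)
The plan is to move the whole statement into $\End(J)_\oll$ through the algebra isomorphism $\DC^1(X^2)\cong\End(J)_\oll$ of \eqref{Sch3.9'}, under which, by \Cref{transdual}, transposition $z\mapsto z^t$ becomes the Rosati involution $\dag$ attached to the canonical polarization. So it suffices to produce an $S_{A,\iota}$-basis $\cB_{A,\iota}=\{\phi_1,\dots,\phi_n\}$ of $\pi_{A,\iota}$ for which each idempotent $\wt\phi_i\circ\phi_i\in\End(J)_\oll$ is fixed by $\dag$. Recall from \Cref{Field extensionII} that these idempotents are exactly the diagonal matrix units $E_{11},\dots,E_{nn}$ of the simple factor $\RM_{n}(S_{A,\iota})\subset\End(J)_\oll$ of \eqref{Md}, and that replacing $\cB_{A,\iota}$ by the basis associated with a matrix $P\in\GL_n(S_{A,\iota})$ replaces the dual basis $\wt\cB_{A,\iota}$ by the contragredient one (from $\phi_i\circ\wt\phi_j=\delta_{ij}\iota$) and hence conjugates the matrix coordinates on $\RM_{n}(S_{A,\iota})$; in particular it transforms the Gram data below by a congruence.

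Next I would feed in \Cref{lem:res}. Since $\iota^\dag=\iota$, part (1) says the Rosati involution preserves the factor $\RM_{n}(S_{A,\iota})$. Since moreover $\oll$ is algebraically closed and $\End(A)_\BQ$ is a field (the standing hypotheses of \Cref{Vanishing results}), part (2) gives $S_{A,\iota}\cong\oll$, says the Rosati is trivial on $S_{A,\iota}$, and provides, in the matrix coordinates coming from $\cB_{A,\iota}$, a symmetric invertible $g\in\RM_{n}(S_{A,\iota})=\RM_{n}(\oll)$ with Rosati involution $x\mapsto g^{-1}x^{T}g$ on $\RM_{n}(S_{A,\iota})$.

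The crux is then elementary linear algebra over $\oll$: an algebraically closed field of characteristic $0$ carries a unique nondegenerate symmetric bilinear form in each dimension up to equivalence, so $g$ is congruent to $I_n$, i.e.\ $P^{T}gP=I_n$ for some $P\in\GL_n(\oll)=\GL_n(S_{A,\iota})$ (diagonalize $g$, then extract square roots of the diagonal entries in $\oll$). Changing $\cB_{A,\iota}$ by this $P$ replaces $g$ by $I_n$, so after the change the Rosati involution on $\RM_{n}(S_{A,\iota})$ is the ordinary transpose $x\mapsto x^{T}$. For the new basis the idempotents $\wt\phi_i\circ\phi_i=E_{ii}$ are symmetric matrices, hence $\dag$-fixed; transporting this back through \eqref{Sch3.9'} and \Cref{transdual} yields $\delta_{\phi_i}^{t}=\delta_{\phi_i}$ for all $i$, which is the claim.

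There is no obstacle of substance here; the only care needed is bookkeeping. One must check that altering the $S_{A,\iota}$-basis acts on the matrix realization of $\RM_{n}(S_{A,\iota})$ precisely by conjugation — so that $g$ varies by congruence — and that the straightening matrix $P$ can be taken with entries in $S_{A,\iota}$; both hold because $S_{A,\iota}\cong\oll$ is a field and $\oll$ is algebraically closed. Positivity of the Rosati involution plays no role; it would be the obstruction if $\oll$ were $\BR$ or a number field, where not every nondegenerate symmetric form is equivalent to the identity — only the positive-definite ones are.
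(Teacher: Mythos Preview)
Your proof is correct and follows essentially the same strategy as the paper: transport the problem to $\End(J)_\oll$ via \eqref{Sch3.9'} and \Cref{transdual}, invoke \Cref{lem:res}(2) to write the Rosati involution on $\RM_n(S_{A,\iota})$ as $x\mapsto g^{-1}x^Tg$ with $g$ symmetric invertible, and then change basis to make the diagonal matrix units Rosati-fixed. The only difference is cosmetic: the paper writes $g=o^{-1}do$ with $o$ orthogonal and $d$ diagonal, takes $v_i=o^{-1}e_i$, and checks directly that $g^{-1}(v_i\wt v_i)^Tg=v_i\wt v_i$ using that $d$ commutes with $E_{ii}$; you instead reduce $g$ all the way to $I_n$ by congruence (which is arguably cleaner, since the orthogonal diagonalizability of an arbitrary invertible symmetric matrix over an algebraically closed field is a more delicate claim than the equivalence of all nondegenerate symmetric forms).
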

 \begin{proof}   Under the assumption, $S_{A,\iota}\cong \oll$.  
A choice of the basis
 $\cB_{A,\iota}$    and 
 the induced dual basis  $\wt\cB_{A,\iota}$
give $\pi_{A,\iota}\cong \oll^n$,  the space of column vectors  
    of length $n$, and the dual $\wt \pi_{A,\iota}\cong\oll_n,$  the space   of row vectors 
    of length $n$.  The proposition is a change of basis  as follows.
     
   Let  $g$  be as in   \Cref{lem:res} (2)  which is invertible symmetric.
 Then by   \Cref{lem:res} (2),  
  we only need to show that there exists a basis $\{v_1,...,v_n\}$ of $\oll^n$ with dual basis $\{\wt v_1,...,\wt v_n\}$  of $\oll_n$   such that the matrix product $  v_i   \wt v_i \in  \RM_{n}(\oll)$ satisfies 
   \begin{equation} \label{hv}
     g^{-1} \lb v_i   \wt v_i\rb^T g= v_i   \wt v_i.
   \end{equation}  
   Such  a basis $\{v_1,...,v_n\}$  is given as follows. Let $e_i\in \oll^n$ (resp. $\wt e_i\in \oll_n$) have $1$ on the $i$-th entry and $0$ elsewhere. 
          Write $g=o^{-1} do$ in $\GL_n(\oll)$ where $o$ is orthonormal (i.e., $oo^T=1$)  and $d$ is diagonal.  
          Let $v_i=o^{-1} e_i$ and $\wt v_i=\wt e_i o$.  Then \eqref{hv} can be checked directly. 
 \end{proof}
In other words,  the basis is chosen so that $ \wt \phi\circ \phi$, which is 
  the diagonal matrix with 1 on the $i$-th entry and $0$ elsewhere  of \eqref {Md}   if  $\phi$ is the $i$-th element of $\cB_{A,\iota}$,
 is fixed by the Rosati involution on $J$.

  \begin{prop} \label{prop:van1}   
   Let $z \in \Ch^2(X^2)$.
  
  (1) Assume $\iota^\dag = \iota$ and  choose $ \cB_{A,\iota}$ as in  \Cref{lem:norm}. If $\dim A\leq [\End(A)_\BQ:\BQ]$ and  $z$ is  invariant by $S_2$. 
  then 
  $$ (\delta_\phi\times \delta_\phi)_* z  =0  $$  for $\phi \in \cB_{A,\iota}$. 
    
  (2)  If   $2\dim A\leq [\End(A)_\BQ:\BQ]$,  then  
  $$ \lb \delta_\phi ^t \times   \delta_\phi\rb_* z =0  $$
 for $\phi \in \cB_{A,\iota}$.

 \end{prop}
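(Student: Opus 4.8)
The plan is to deduce both parts from \Cref{prevan2}, applied with the projector $p=\delta_\phi$ — which is idempotent by \eqref{ddd} — the only substantive work being to translate the numerical hypotheses on $A$ into the cohomological hypotheses on $p$ demanded by \Cref{prevan2}.

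First I would record that $\delta_\phi$ and its transpose $\delta_\phi^t$ both lie in $\DC^1(X^2)$ (which is stable under transposition), hence act on $H^*(X)$ only through $H^1(X)$, and that by \eqref{lem:coh} we have $\delta_{\phi,*}H^*(X)=\delta_{\phi,*}H^1(X)\cong\iota_*H^1(A)$. Write $E=\End(A)_\BQ$. Under the standing hypotheses of the subsection — $\oll$ algebraically closed of characteristic $0$ and $E$ a field — the algebra $\End(A)_\oll=E\otimes_\BQ\oll$ is isomorphic to $\oll^{[E:\BQ]}$, so $H^1(A,\oll)$, of dimension $2\dim A$, splits as the direct sum of the $[E:\BQ]$ eigenspaces indexed by the embeddings $E\hookrightarrow\oll$, each of dimension $2\dim A/[E:\BQ]$, and the irreducible central idempotent $\iota$ projects onto exactly one of them. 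Hence $\dim\delta_{\phi,*}H^*(X)=\dim\iota_*H^1(A)=2\dim A/[E:\BQ]$, a number independent of $\phi$ and of the chosen basis (in particular $\delta_\phi\neq0$). Since $(\delta_\phi^t)_*$ acts on $H^1(X)$ as the adjoint of $\delta_{\phi,*}$ for the cup-product (Poincar\'e) pairing, $\dim(\delta_\phi^t)_*H^*(X)$ equals the same number.

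For part~(1), the hypothesis $\iota^\dag=\iota$ lets us choose $\cB_{A,\iota}$ as in \Cref{lem:norm}, so that $\delta_\phi^t=\delta_\phi$, while $\dim A\leq[E:\BQ]$ gives $\dim\delta_{\phi,*}H^*(X)\leq2$; \Cref{prevan2}~(1) with $p=\delta_\phi$ then yields $(\delta_\phi\times\delta_\phi)_*z=0$ for $S_2$-invariant $z$. For part~(2), the hypothesis $2\dim A\leq[E:\BQ]$ gives $\dim\delta_{\phi,*}H^*(X)=\dim(\delta_\phi^t)_*H^*(X)\leq1$, so \Cref{prevan2}~(2) with $p=\delta_\phi$ gives $(\delta_\phi^t\times\delta_\phi)_*z=0$ for all $z\in\Ch^2(X^2)$. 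The only step requiring genuine care is the computation $\dim\iota_*H^1(A)=2\dim A/[E:\BQ]$, in which both hypotheses are essential — algebraic closedness of $\oll$ to split $\End(A)_\oll$ completely, and $\End(A)_\BQ$ being a field so that all eigenspaces have the common dimension $2\dim A/[E:\BQ]$ and a single irreducible idempotent $\iota$ cuts out precisely one of them — together with the observation that transposition preserves this dimension; once these are in place the proposition is a purely formal consequence of the finite-dimensionality machinery already packaged into \Cref{prevan2}.
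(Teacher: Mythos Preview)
Your proof is correct and follows exactly the paper's approach: reduce to \Cref{prevan2} via the computation $\dim\delta_{\phi,*}H^*(X)=\dim\iota_*H^1(A)=2\dim A/[E:\BQ]$ from \eqref{lem:coh}, just as in the proof of \Cref{prop:van0}. The paper's own argument is the one-line ``similar to \Cref{prop:van0}, apply \Cref{prevan2}''; you have simply spelled out the implicit steps, including the observation that $\dim(\delta_\phi^t)_*H^*(X)=\dim\delta_{\phi,*}H^*(X)$ needed for \Cref{prevan2}~(2).
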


  \begin{proof}
    The proof is similar to the one of \Cref{prop:van0} with $\dim   \delta_{\phi,*} H^*(X)=2$ resp. 1, for (1) resp. (2). This time, we apply   \Cref{prevan2}.   
   \end{proof} 
 
          \section{Faber--Pandharipande type 0-cycles}   \label{Faber--Pandharipande type 0-cycles}    
We give  applications of results   in the last section to curves with many \etale correspondences. We present  explicit examples.
     We continue to use the notations in the last section.
We do not assume that $\oll$ is algebraically closed or $\End(A)_\BQ$ at this moment. 
  \subsection{Correspondences}\label{Generalized divisorial correspondences}
   The standard direct sum decomposition:  
   \begin{equation}\label{refm1}
    \Corr(X^2)=\Ch^1(X^2)=\DC^1(X^2)\oplus   (\Ch^1(X)  \times [X]) \oplus    ([X]  \times \Ch^1(X))
    \end{equation}
gives three lemmas. 
\begin{lem}\label{lem:DCDC1}
Let $z \in \Corr(X^2)$. Then its projection to $\DC^1(X^2)$   is
$  \delta\circ z\circ \delta.$

\end{lem}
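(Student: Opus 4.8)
The plan is to show that the linear endomorphism $P\colon z\mapsto\delta\circ z\circ\delta$ of $\Ch^1(X^2)=\Corr(X^2)$ is precisely the projection onto the first summand in the decomposition \eqref{refm1}; for this it is enough to check that $P$ takes values in $\DC^1(X^2)$, that $P$ restricts to the identity on $\DC^1(X^2)$, and that $P$ kills each of the summands $\Ch^1(X)\times[X]$ and $[X]\times\Ch^1(X)$.

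The first two points are formal. By \Cref{pushcomp2} (with $a=b=\delta$ and $c=z$) and $\delta^t=\delta$ one has $\delta\circ z\circ\delta=(\delta\times\delta)_*z$, which lies in $\DC^1(X^2)$ by \Cref{pushcomp3}; alternatively $\delta_*e=[\Delta]_*e-(e\times[X])_*e-([X]\times e)_*e=e-0-e=0$, and likewise $\delta^*e=0$ since $\delta^t=\delta$, whence $(\delta\circ z\circ\delta)_*e=(\delta\circ z\circ\delta)^*e=0$. Moreover, since $\delta$ is the two-sided identity of the algebra $\DC^1(X^2)$, $\delta\circ w\circ\delta=w$ for every $w\in\DC^1(X^2)$. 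So $P$ is idempotent with image $\DC^1(X^2)$, hence is a projection onto that summand, and the claim reduces to the last point.

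For that I would use the involution $z\mapsto z^t$: it commutes with $P$ (because $\delta^t=\delta$ and $(u\circ v\circ w)^t=w^t\circ v^t\circ u^t$) and interchanges $\Ch^1(X)\times[X]$ and $[X]\times\Ch^1(X)$, so it suffices to prove $\delta\circ(a\times[X])\circ\delta=0$ for $a\in\Ch^1(X)$; in fact I would prove the \emph{stronger} identity $\delta\circ(a\times[X])=0$. Expanding $\delta=[\Delta]-e\times[X]-[X]\times e$ and computing each composition on $X^3$ from $v\circ u=\mathrm{pr}_{13,*}(\mathrm{pr}_{12}^*u\cdot\mathrm{pr}_{23}^*v)$, one finds $[\Delta]\circ(a\times[X])=a\times[X]$; then $(e\times[X])\circ(a\times[X])=\mathrm{pr}_{13,*}(a\times e\times[X])=\deg(e)\,(a\times[X])=a\times[X]$, the middle factor $e$ being a $0$-cycle of degree $\deg e=1$; and $([X]\times e)\circ(a\times[X])=\mathrm{pr}_{13,*}(a\times[X]\times e)=0$, since $\mathrm{pr}_{13}$ collapses the support $\supp(a)\times X\times\supp(e)$ of this $1$-dimensional cycle onto the $0$-dimensional set $\supp(a)\times\supp(e)$. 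Subtracting gives $\delta\circ(a\times[X])=a\times[X]-a\times[X]-0=0$.

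I expect the only place needing care to be this last composition computation — keeping track of which Chow degree each product cycle sits in, so as to recognize which pushforward is multiplication by a degree and which vanishes for dimension reasons — though it is entirely routine; no conceptual obstacle arises. One can also bypass it by noting that $a\times[X]$, viewed as a self-correspondence of $X$, factors through $\Spec k$, so $\delta\circ(a\times[X])$ factors through $\delta_*[X]=[X]-\deg(e)[X]-0=0$.
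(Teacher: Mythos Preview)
Your proof is correct and is precisely the verification the paper leaves implicit: the paper gives no separate proof of this lemma, stating only that the decomposition \eqref{refm1} ``gives'' it, so your argument simply fills in the routine details of why $z\mapsto\delta\circ z\circ\delta$ is the projector onto the $\DC^1(X^2)$ summand. One minor remark: invoking \Cref{pushcomp3} for the first point is slightly circular (that corollary is essentially a reformulation of the present lemma), but your alternative computation $\delta_*e=0=\delta^*e$ is self-contained and suffices.
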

\begin{lem}\label{lem:DCDC2}
Let $z \in \Corr(X^2)$. The
composition $\Corr(X^2) \to \DC^1(X^2) \cong \End(J)_\oll$,
where the last isomorphism is \eqref{Sch3.9'}, sends $z$ to the endomorphism of $J$   induced by pushforward  of divisors.

\end{lem}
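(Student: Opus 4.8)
The plan is to decompose $z$ via the splitting \eqref{refm1} and to check that the two ``non-divisorial'' summands act as zero on $J=\Pic^0_{X/k}$ by pushforward of divisors. Write $z=z_0+z_1+z_2$ with $z_0\in\DC^1(X^2)$, $z_1\in\Ch^1(X)\times[X]$ and $z_2\in[X]\times\Ch^1(X)$; by \Cref{lem:DCDC1} the $\DC^1$-component is $z_0=\delta\circ z\circ\delta$. By construction the isomorphism \eqref{Sch3.9'} (the case $X'=X$ of \eqref{Sch3.9}) carries a divisorial correspondence to the homomorphism $J\to\Pic^0_{X/k}=J$ which on $0$-cycles of degree $0$ is given by pushforward of divisors, so the composite $\Corr(X^2)\to\DC^1(X^2)\cong\End(J)_\oll$ in the statement sends $z$ to the endomorphism of $J$ attached to $z_0$ by pushforward of divisors. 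Hence it suffices to prove $z_{1,*}\alpha=z_{2,*}\alpha=0$ in $\Ch^1(X)=\Pic(X)$ for every $0$-cycle $\alpha$ on $X$ of degree $0$; granting this, $z_*\alpha=z_{0,*}\alpha$ for all such $\alpha$, which is precisely the asserted identification.

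For the two vanishings I would use that $z'_*\alpha=p_{2,*}(z'\cdot p_1^*\alpha)$ for a correspondence $z'$ and a $0$-cycle $\alpha$, with $p_1,p_2\colon X^2\to X$ the projections, and that this is $\oll$-linear in $z'$ and in $\alpha$. For $z_1=D\times[X]=p_1^*D$ with $D\in\Ch^1(X)$ one gets $z_{1,*}\alpha=p_{2,*}\bigl(p_1^*(D\cdot\alpha)\bigr)$, and $D\cdot\alpha\in\Ch^2(X)=0$ because $X$ is a curve, so $z_{1,*}\alpha=0$ for arbitrary $\alpha$. For $z_2=[X]\times D'=p_2^*D'$ with $D'\in\Ch^1(X)$ the projection formula gives $z_{2,*}\alpha=D'\cdot p_{2,*}(p_1^*\alpha)=(\deg\alpha)\,D'$, which vanishes when $\deg\alpha=0$. (Incidentally this reproves that $z_*$ preserves $\Pic^0$, as it must since $z_{0,*}$ does by \eqref{Sch3.9}.)

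I expect no real obstacle. The only point needing a sentence of care is matching the ``pushforward of divisors'' formula on degree-zero $0$-cycles with the morphism of abelian varieties built into \eqref{Sch3.9}; with that in hand the lemma is just the decomposition \eqref{refm1} together with the two short computations above.
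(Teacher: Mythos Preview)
Your proof is correct and follows exactly the approach the paper has in mind: the paper states that the decomposition \eqref{refm1} ``gives'' this lemma without further argument, and your proposal simply fills in the details—checking that the summands $\Ch^1(X)\times[X]$ and $[X]\times\Ch^1(X)$ act trivially on $\Pic^0$ by pushforward, so that $z$ and its $\DC^1$-component $z_0$ induce the same endomorphism of $J$.
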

\begin{lem}\label{lem:DCDC3}
Let $c,c' \in \DC^1(X^2)$. The 
\begin{align*}
\lb  c\times c'\rb_* \lb \Ch^1(X^2) \cdot  \wh e\rb =  \lb  c\times c'\rb_* \lb \DC^1(X^2) \cdot  \wh e\rb.
\end{align*}

\end{lem}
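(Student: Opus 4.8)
The plan is to prove Lemma~\ref{lem:DCDC3} by reducing everything on the right and left to the decomposition \eqref{refm1} and then invoking Lemma~\ref{lem:DCDC1}. First I would observe that $\wh e$ is itself an element of $(\Ch^1(X)\times[X])\oplus([X]\times\Ch^1(X))$, so intersecting with $\wh e$ interacts cleanly with the direct sum decomposition. The key point is that for $c,c'\in\DC^1(X^2)$, the pushforward operator $(c\times c')_*$ annihilates the two ``non-divisorial'' summands of $\Corr(X^2)$; this is exactly the content of Corollary~\ref{pushcomp3} together with Lemma~\ref{pushcomp2}, since $c=\delta\circ c\circ\delta$ (as $\delta$ is the identity of $\DC^1(X^2)$) and likewise for $c'$, so $(c\times c')_* = (c'\times c)\circ$ -- more precisely, by Lemma~\ref{pushcomp2}, $(c\times c')_* z = c'\circ z\circ c^t$, and since $c,c^t,c'\in\DC^1(X^2)$ which is an ideal-like subalgebra satisfying $\delta\circ(-)\circ\delta = $ projection to $\DC^1(X^2)$, the composition $c'\circ z\circ c^t$ only depends on $\delta\circ z\circ\delta$, i.e.\ on the $\DC^1(X^2)$-component of $z$.

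Concretely I would argue as follows. Write an arbitrary element of $\Ch^1(X^2)\cdot\wh e$ as $z\cdot\wh e$ for $z\in\Ch^1(X^2)$, and decompose $z = z_0 + z_1 + z_2$ according to \eqref{refm1}, with $z_0\in\DC^1(X^2)$. By Lemma~\ref{lem:DCDC1}, $z_0 = \delta\circ z\circ\delta$. Then $z\cdot\wh e = z_0\cdot\wh e + (z_1+z_2)\cdot\wh e$. Applying $(c\times c')_*$ and using Lemma~\ref{pushcomp2}, $(c\times c')_*(z\cdot\wh e) = c'\circ(z\cdot\wh e)\circ c^t$. The hoped-for simplification is that $c'\circ((z_1+z_2)\cdot\wh e)\circ c^t = 0$: the cycles $z_1\cdot\wh e$ and $z_2\cdot\wh e$ should lie in a piece of $\Corr(X^2)$ killed on the left by composition with $c'\in\DC^1(X^2)$ or on the right by $c^t\in\DC^1(X^2)$. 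One checks this on cohomology (as in the proof of Lemma~\ref{pushcomp2}): $z_i\cdot\wh e$ for $i=1,2$ acts on $H^*(X)$ through $H^0$ or $H^2$, whereas $c'$ and $c^t$ act through $H^1$ only, so the composite is zero in homology; combined with the fact that all these correspondences are controlled by $\End(J)_\oll$ via \eqref{Sch3.9'} (Lemma~\ref{lem:DCDC2}), the composite vanishes on the nose, not just homologically. Hence $(c\times c')_*(z\cdot\wh e) = c'\circ(z_0\cdot\wh e)\circ c^t = (c\times c')_*(z_0\cdot\wh e)$, and since $z_0\in\DC^1(X^2)$ was the projection of $z$, the two sides of the asserted equality agree.

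The main obstacle I anticipate is making the vanishing $c'\circ((z_1+z_2)\cdot\wh e)\circ c^t = 0$ precise at the level of Chow groups rather than merely modulo homological equivalence; a purely cohomological argument only gives homological triviality. The clean way around this is to push the computation into $\End(J)_\oll$: by Lemma~\ref{lem:DCDC2}, composition of a divisorial correspondence with anything factors through the endomorphism it induces on $J$ by pushforward of divisors, and the correspondences $z_i\cdot\wh e$ ($i=1,2$) induce the zero endomorphism of $J$ because their divisorial projection $\delta\circ(z_i\cdot\wh e)\circ\delta$ vanishes --- one computes $\delta\circ\wh e = 0$ and $\wh e\circ\delta = 0$ directly from $\wh e = e\times[X]+[X]\times e$ and $\delta = [\Delta]-\wh e$, or simply notes that $z_i\cdot\wh e\in(\Ch^1(X)\times[X])\oplus([X]\times\Ch^1(X))$ is already in the complement of $\DC^1(X^2)$ in \eqref{refm1}. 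Thus $c'\circ(z_i\cdot\wh e) = 0$ in $\Corr(X^2)$ for $i=1,2$ via the isomorphism \eqref{Sch3.9'}, which is an honest equality of rational equivalence classes. The remaining steps --- the decomposition of $z$, the identity $z_0=\delta\circ z\circ\delta$, and the bookkeeping with Lemma~\ref{pushcomp2} --- are routine.
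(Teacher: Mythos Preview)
Your strategy---decompose $z\in\Ch^1(X^2)$ via \eqref{refm1} and show that the two non-$\DC^1$ summands are killed after intersecting with $\wh e$ and applying $(c\times c')_*$---is exactly the paper's intended route. But the execution has a real gap.

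The problem is your justification of $c'\circ(z_i\cdot\wh e)=0$ ``via the isomorphism \eqref{Sch3.9'}''. That isomorphism is $\DC^1(X^2)\cong\End(J)_\oll$; it says nothing about $0$-cycles. The cycle $z_i\cdot\wh e$ lies in $\Ch^2(X^2)$, not $\Ch^1(X^2)$, so it has no ``divisorial projection'' in the sense of \eqref{refm1}, and Lemma~\ref{lem:DCDC2} does not assert that ``composition of a divisorial correspondence with anything factors through the endomorphism it induces on $J$''. That claim is false in general for $0$-cycles. (Relatedly, Lemma~\ref{pushcomp2} is stated only for $a,b,c\in\Ch^1(X^2)$; you are applying it with $c\in\Ch^2(X^2)$. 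The general Lieberman lemma does cover this, but you have not appealed to it.)

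The fix is much simpler than your detour through compositions. Write $z_1=\alpha\times[X]$ with $\alpha\in\Ch^1(X)$. Then
\[
z_1\cdot\wh e=(\alpha\times[X])\cdot(e\times[X]+[X]\times e)=(\alpha\cdot e)\times[X]+\alpha\times e=\alpha\times e,
\]
since $\alpha\cdot e\in\Ch^2(X)=0$. Now
\[
(c\times c')_*(\alpha\times e)=c_*\alpha\times c'_*e=0,
\]
because $c'_*e=0$ by the very definition of $c'\in\DC^1(X^2)$. The case $z_2=[X]\times\beta$ is symmetric, using $c_*e=0$. No Lieberman, no \eqref{Sch3.9'}, no cohomological detour---just the definition of $\DC^1$ and the obvious behaviour of $(c\times c')_*$ on product cycles.
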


%
%
%
%

    \subsection{Stabilizing correspondences}\label{Good correspondences}
We say that a morphism $$\pi=(\pi_1,\pi_2):C\to X\times X$$ from a curve $C$  is   \textit{$e$-stabilizing}  of degree $d$  
if $\pi_1^*e=\pi_2^*e$ and $\pi_i^*e$ is of degree $d>0$.  
If $e$ is clear from the context, we simply call it \textit{$e$-stabilizing}.
This implies 
$\pi_1^*[X]=\pi_2^*[X]=d[C]$.
Assume that $c$ (resp. $c'$) is  the direct image   of  $[C]$
by a $e$-stabilizing  correspondence $\pi=(\pi_1,\pi_2)$ 
 of degree $d$ (resp. $\pi'=(\pi'_1,\pi'_2):$ of degree $d'$) with respect to  $e$.
Then for $z\in \Ch^1(X^2)$, 
we have  \begin{align*}
dd'\lb \lb c\times c'\rb_* z\rb\cdot (e\times [X]  )=&dd' \lb \lb \pi_2\times\pi_2' \rb _*
\lb\pi_1\times \pi_1'\rb^* z \rb\cdot (e\times [X]  )\quad\text{by definition}
\\
=&  \lb \pi_2\times\pi_2' \rb _* \lb
\lb\pi_1\times \pi_1'\rb^*  z \cdot  \lb \pi_2\times\pi_2' \rb ^* (e\times [X])\rb \quad\text{by projection formula}
\\
=&  \lb \pi_2\times\pi_2' \rb _* \lb
\lb\pi_1\times \pi_1'\rb^*  z \cdot  \lb \pi_1\times \pi_1' \rb ^* (e\times [X])\rb\quad\text{by assumption on }\pi,\pi'
\\
=&dd'  \lb \pi_2\times\pi_2' \rb _*
\lb\pi_1\times \pi_1'\rb^* \lb z \cdot (e\times [X]  ) \rb  \quad\text{by projeIction formula}
\\
=&dd'   \lb c\times c'\rb_* \lb z \cdot (e\times [X]  ) \rb\quad\text{by definition}.
\end{align*}
Similarly,  \begin{align*}
dd'\lb \lb c\times c'\rb_* z\rb\cdot ([X] \times  e)=  dd'   \lb c\times c'\rb_* \lb z \cdot ( [X]\times e  ) \rb.
\end{align*}
So
\begin{align}\label{goodcycle0}
 \lb \lb c\times c'\rb_* z\rb\cdot \wh e=  \lb  c\times c'\rb_* \lb z \cdot  \wh e\rb.
\end{align}

\begin{defn}\label{defngoodcycle}
 (1) Let $ \Stab(X^2)\subset \Corr(X^2)$ be the $\oll$-\textit{subalgebra} generated  by  cycle classes of the  images of  
 $e$-stabilizing  morphisms. 
 
 (2) Let $ \wStab(X^2) \subset \Corr(X^2)$ be the $\oll$-\textit{subalgebra} generated  by $ \Stab(X^2)$ and $\delta$. 
\end{defn}

\begin{rmk}
In some cases, $\Stab(X^2) = \wStab(X^2)$, i.e., $\delta\in \Stab(X^2)$. For example, when a finite group $G$ acts on $X$  such that $e$ is $G$-invariant.
Then we have a morphism $\Omega[G]\to \Stab(X^2)$ sending $g\in G$ to the graph $\Gamma_g$ of $g$. If  
$H^1(X)$ has no nonzero $G$-invariants (i.e., $X/G\cong \BP^1$), then 
$e$ is unique (and is a multiple of the canonical class). Moreover,
by 
\eqref{refm1} and \Cref{lem:DCDC2}, 
 the image of the idempotent $\frac{1}{|G|}\sum_{g\in G} g$   
associated to the trivial representation
of $\Omega[G]$ in $\Stab(X^2)$ is  $\wh e/2$.  Thus $\delta\in \Stab(X^2)$. 
We will encounter this case  soon.
 For another example, see  \eqref{manyH}.
However, our discussion does not depend on this.
\end{rmk}

By \eqref{goodcycle0},  for $c,c'\in  \Stab(X^2)$ and $z\in \Ch^1(X^2)$,  we have
 \begin{align*} 
 \lb \lb c\times c'\rb_* z\rb\cdot \wh e=  \lb  c\times c'\rb_* \lb z \cdot  \wh e\rb.
\end{align*} 
\begin{defn}
  Let $\DC(X^2)$ be the subspace of  $\Ch^{1}(X^2)$ of $z$'s such that $z_* e\in \oll e$ and $z^*e\in \oll e$.   
  \end{defn}
It is direct to check that for $z\in \DC(X^2)$, we have
\begin{align*}
 \lb \lb \delta \times [\Delta]\rb_* z\rb\cdot \wh e=  \lb  \delta \times [\Delta] \rb_* \lb z \cdot  \wh e\rb,\quad
 \lb \lb [\Delta]\times\delta \rb_* z\rb\cdot \wh e=  \lb  [\Delta]\times\delta \rb_* \lb z \cdot  \wh e\rb.
\end{align*} 
Then  for $c,c'\in  \wStab(X^2)$ and $z\in \DC(X^2)$, we have
 \begin{align}\label{goodcycle}
 \lb \lb c\times c'\rb_* z\rb\cdot \wh e=  \lb  c\times c'\rb_* \lb z \cdot  \wh e\rb,
\end{align} 
where $\wh e$ is defined in \eqref{edelta}.

\subsection{First application}

For the first application, we only consider the divisor $  [\Delta]      $, but not the whole $\Ch^1(X^2)$,     and thus insensitive to field extensions. 
Assume that $k=\ol k$ for now.

Let     $X'$ also  be a  curve  and $ e'\in \Ch_0(X)$    of degree 1. 
Let $\Delta'$ be the diagonal of $X'^2$.  Define $\wh e'\in \Ch^1(X'^2)$ and
   $\delta'\in \DC(X'^2)$ as in \eqref{edelta} accordingly.
Assume that there is a morphism $f:X\to X'$ of degree $d$ such that $e'=f_* e$. 
 Let  $ \Gamma_f\in \Ch^1(X\times X')$ be the class of the graph of $f$.
It is direct to check that   $e'=f_* e$ implies
  $$ \Gamma_f\circ \delta = \delta' \circ \Gamma_f.$$ 
So 
$$\wt p_f :=   \Gamma_f^t\circ \Gamma_f\circ \delta =   \Gamma_f^t\circ \delta' \circ \Gamma_f ,$$ and thus
satisfies  
   \begin{equation}\label{pffpf}
  \wt p_f^t=\wt p_f.
       \end{equation} 
  Then $$ \wt p_f =    \delta \circ \Gamma_f^t\circ \Gamma_f =  \frac{1}{\sqrt d} \delta \circ \Gamma_f^t\circ \Gamma_f\circ \delta \in \DC^1(X^2),$$
where the last equation follows from that  $\wt p_f=\wt p_f\circ \delta$ by definition.

 Assume $\sqrt d\in \oll$.
By the projection formula, 
$\Gamma_f\circ \Gamma_f^t = d[\Delta'] \in \Corr(X',X')$  and thus 
$$p_f = \frac{1}{\sqrt d}\wt p_f$$ is a projector.
Let $$q_f = \delta-p_f \in \DC^1(X^2).$$
It is  a projector, and
 \begin{equation}\label{qffqf}
  q_f^t=q_f.
       \end{equation}  
  \begin{lem} \label{1applem1}  
    Assume 
    $ (\delta'\times \delta')_* \lb [\Delta'] \cdot  \wh e'\rb  =0 $ and 
$ (q_f\times q_f)_* \lb [\Delta] \cdot  \wh e\rb  =0. $
If  $ p_f\in \wStab(X^2)$ (equivalently, $q_f = \delta-p_f\in \wStab(X^2)$), then
 $$ (\delta\times \delta)_* \lb [\Delta] \cdot  \wh e\rb  =0. $$

   \end{lem}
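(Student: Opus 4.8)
The plan is to decompose $[\Delta] \in \Ch^1(X^2)$ using the two orthogonal projectors $p_f$ and $q_f = \delta - p_f$ and then exploit the bilinearity of the pushforward $(\delta \times \delta)_*(\,\cdot\,\cdot\,\wh e)$ together with the compatibility \eqref{goodcycle} between pushforward by stabilizing correspondences and multiplication by $\wh e$. First I would write $\delta = p_f + q_f$, so that
\begin{align*}
(\delta \times \delta)_*\lb [\Delta]\cdot \wh e\rb
&= (p_f\times p_f)_*\lb [\Delta]\cdot \wh e\rb
 + (p_f\times q_f)_*\lb [\Delta]\cdot \wh e\rb \\
&\quad + (q_f\times p_f)_*\lb [\Delta]\cdot \wh e\rb
 + (q_f\times q_f)_*\lb [\Delta]\cdot \wh e\rb.
\end{align*}
The last term vanishes by hypothesis, so it remains to kill the first three terms using the vanishing over $X'$.

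For the mixed and $p_f$-terms, the idea is that $p_f = \frac{1}{\sqrt d}\,\delta \circ \Gamma_f^t \circ \Gamma_f \circ \delta$ essentially pulls back structure from $X'$. Since $p_f \in \wStab(X^2)$ and $q_f = \delta - p_f \in \wStab(X^2)$, and $[\Delta]$ lies in $\DC(X^2)$ (its action on $e$ is $e \in \oll e$), the identity \eqref{goodcycle} lets me move the $\wh e$ past the pushforward: for $c, c' \in \wStab(X^2)$,
$$(c\times c')_*\lb [\Delta]\cdot \wh e\rb = \lb (c\times c')_*[\Delta]\rb\cdot \wh e.$$
So each of the first three terms equals $\lb (c\times c')_*[\Delta]\rb \cdot \wh e$ for appropriate choices $c,c' \in \{p_f, q_f\}$ with at least one factor equal to $p_f$. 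Then I would use \Cref{pushcomp2} to rewrite $(c\times c')_*[\Delta] = c' \circ c^t$, and since $p_f^t = p_f$ (from \eqref{pffpf}) and $q_f^t = q_f$ (from \eqref{qffqf}), and $p_f, q_f$ are orthogonal idempotents, the terms $q_f \circ p_f$ and $p_f \circ q_f$ vanish identically, while $p_f \circ p_f = p_f$.

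This reduces everything to showing $(p_f \times p_f)_*\lb[\Delta]\cdot\wh e\rb = 0$, equivalently $p_f \cdot \wh e = 0$ after applying \eqref{goodcycle} and \Cref{pushcomp2} — wait, more precisely, I should trace $p_f$ back through $\Gamma_f$ to $X'$. Using $p_f = \frac{1}{d}\,\delta\circ \Gamma_f^t \circ \delta' \circ \Gamma_f$ (absorbing the $\frac{1}{\sqrt d}$ factors and $\Gamma_f \circ \delta = \delta' \circ \Gamma_f$), the functoriality of pushforward along $\Gamma_f$ and the projection-formula-type identities for $e$-stabilizing correspondences (Section \Cref{Good correspondences}, specifically \eqref{goodcycle0}) should let me express $(p_f\times p_f)_*\lb[\Delta]\cdot\wh e\rb$ in terms of $(\delta'\times\delta')_*\lb[\Delta']\cdot\wh e'\rb$, which is zero by hypothesis; here one uses $f_*[\Delta]$ relates to $[\Delta']$ via $\Gamma_f$ and that $e' = f_*e$ so $\wh e$ and $\wh e'$ are compatible. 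The main obstacle I anticipate is the bookkeeping in this last step: carefully verifying that the pushforward of $[\Delta]\cdot\wh e$ by $p_f \times p_f$ genuinely descends to the analogous cycle on $X'^2$ and does not pick up extra error terms — this requires the $e$-stabilizing property of (the correspondences built from) $\Gamma_f$ and a clean application of the projection formula identity \eqref{goodcycle0}, together with checking that the degree factors $d$, $\sqrt d$ balance correctly.
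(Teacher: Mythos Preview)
Your proposal is correct and follows essentially the same route as the paper: decompose $\delta=p_f+q_f$, kill the cross terms via \Cref{pushcomp2} and orthogonality (this is exactly the content of \Cref{prevan1}(1) combined with \eqref{goodcycle}, which is how the paper phrases it), and reduce the $(p_f\times p_f)_*$ term to the hypothesis on $X'$.

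Your only hesitation is the ``bookkeeping'' in the last step, but this is a one-line computation with no error terms or degree factors to worry about. Write $p_f=\tfrac{1}{d}\,\Gamma_f^t\circ\delta'\circ\Gamma_f$, so that
\[
(p_f\times p_f)_*\bigl([\Delta]\cdot\wh e\bigr)
=\tfrac{1}{d^2}\,(\Gamma_f^t\times\Gamma_f^t)_*\,(\delta'\times\delta')_*\,(f\times f)_*\bigl([\Delta]\cdot\wh e\bigr).
\]
Now $[\Delta]\cdot\wh e=2\,\Delta_{X,*}(e)$ (restrict $\wh e$ to the diagonal), and since $(f\times f)\circ\Delta_X=\Delta_{X'}\circ f$ and $f_*e=e'$ by hypothesis, we get $(f\times f)_*\bigl([\Delta]\cdot\wh e\bigr)=2\,\Delta_{X',*}(e')=[\Delta']\cdot\wh e'$ on the nose. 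Applying the hypothesis $(\delta'\times\delta')_*\bigl([\Delta']\cdot\wh e'\bigr)=0$ finishes it. This is what the paper means by ``by definition''; note it does not even use $p_f\in\wStab(X^2)$---that hypothesis is only needed for the cross terms.
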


   \begin{proof} 
By definition, $ (\delta'\times \delta')_* \lb [\Delta'] \cdot  \wh e'\rb  =0 $ implies 
$ (p_f\times p_f)_* \lb [\Delta] \cdot  \wh e\rb  =0. $
So
 we  only need to show that $ (p_f\times q_f)_* \lb [\Delta] \cdot  \wh e\rb  =0 $. But this follows from \Cref{prevan1} and \eqref{goodcycle}.
    \end{proof}
     
Another description of $p_f,q_f$ is as follows.
Let $J'=\Pic_{X'/k}^0$ and $\pi_f:J\to J', i_f:J'\incl J$ be the homomorphism  induced by the pushforward by $\Gamma_f,\Gamma_f^t$. 
Then $\pi_f\circ i_f = d\cdot \id_{J'}$  
and gives a decomposition 
$$J = J'\oplus J_f$$
with $J_f=\ker \pi_f = \coker i_f$
in   the  (semisimple) isogeny category abelian of varieties.
The projections to $J'$ and $J_f$  corresponds to $p_f,q_f$ under \eqref{Sch3.9'}.
 The Rosati involution  on $J$ (associated  with  the canonical polarization) indcues a 
  Rosati involution  on $J_f$.

    \begin{thm}\label{1appthm}  Let
  $E = E_1\oplus E_2 \subset \End(J_f)_\BQ$ be a direct sum of commutative semisimple subalgebras.
  Assume 
   \begin{itemize}
   \item[(Base)]   
    $ (\delta'\times \delta')_* \lb [\Delta'] \cdot  \wh e'\rb  =0 $,  
    \item [(RM)]   $H^1(J_f)\otimes_E E_1$ is a free  $E_1\otimes\oll$-module of rank at most 2 and the restriction of  the Rosati involution on $E_1$ is trivial,  
  \item [(CM)]   $H^1(J_f)\otimes_E E_2$ is a free  $E_2\otimes\oll$-module of rank at most 1, and

  \item[(Stab)]  
the preimage of $E$ in $\DC^1(X^2)
$ under
  the  isomorphism \eqref{Sch3.9'} is contained in $ \wStab(X^2)$.
  \end{itemize}

Then
 $$ (\delta\times \delta)_* \lb [\Delta] \cdot  \wh e\rb  =0. $$

   \end{thm}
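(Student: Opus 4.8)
The plan is to apply \Cref{1applem1} to $f$, so that everything reduces to two facts: $p_f\in\wStab(X^2)$ and $(q_f\times q_f)_*\lb[\Delta]\cdot\wh e\rb=0$ (the hypothesis on $\delta',\wh e'$ required by that lemma is exactly (Base)). The first is immediate: under \eqref{Sch3.9'} the projector $q_f$ corresponds to $\id_{J_f}=1_{E_1}+1_{E_2}\in E$, so $q_f\in\wStab(X^2)$ by (Stab), and hence $p_f=\delta-q_f\in\wStab(X^2)$ since $\delta\in\wStab(X^2)$ by definition. Here I use that $1_{E_1}+1_{E_2}=\id_{J_f}$, which is forced by the setup, since a complementary isogeny summand of $J_f$ would be uncontrolled by (RM) and (CM). Moreover, since extension of the coefficient field $\oll$ is flat and hence injective on Chow groups, and since all four hypotheses persist under such an extension, I may and do assume $\oll$ algebraically closed.

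It remains to prove $(q_f\times q_f)_*z=0$ for $z=[\Delta]\cdot\wh e$, which is $S_2$-invariant. By \Cref{transdual} and (RM), the Rosati involution of $J_f$ is trivial on $E_1$; combined with $q_f^t=q_f$ (see \eqref{qffqf}) this forces it to fix $1_{E_1}$ and $1_{E_2}$. I also claim it preserves $E$: on $E_1$ this is clear, and on $E_2$ it follows because (CM) pins down $E_2$ as the full endomorphism algebra $1_{E_2}\End(J_f)_\BQ 1_{E_2}$ of the $1_{E_2}$-part of $J_f$ — freeness of rank $\le 1$ over $E_2\otimes\oll$ makes each factor of that part a simple abelian variety with complex multiplication by the corresponding factor of $E_2$, of exactly half its cohomological dimension, whose endomorphism algebra is therefore that factor itself — and $1_{E_2}\End(J_f)_\BQ 1_{E_2}$ is stable under the Rosati involution because $1_{E_2}^\dag=1_{E_2}$.

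Now decompose. As $\oll$ is algebraically closed and $E=E_1\oplus E_2$ is commutative semisimple, $E\otimes_\BQ\oll$ is a product of copies of $\oll$; the Rosati involution permutes its primitive idempotents, fixing those coming from $E_1$ and grouping those from $E_2$ into orbits of size at most $2$. For each orbit $O$ let $p_O\in\DC^1(X^2)$ correspond under \eqref{Sch3.9'} to $\sum_{\epsilon\in O}\epsilon$. Then the $p_O$ are pairwise orthogonal idempotents summing to $q_f$; each lies in $\wStab(X^2)$ by (Stab); each satisfies $p_O^t=p_O$; and by \eqref{lem:coh} with (RM)/(CM) one has $\dim p_{O,*}H^*(X)=\sum_{\epsilon\in O}\dim\epsilon_*H^1(J_f)\le 2$ (a singleton $E_1$-orbit contributes at most $2$; an $E_2$-orbit of size at most $2$ contributes at most $2\cdot 1$). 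Hence $(p_O\times p_O)_*z=0$ by \Cref{prevan2}(1) (using $p_O^t=p_O$, the dimension bound, and $S_2$-invariance of $z$); and for $O\ne O'$ we have $p_{O'}\circ p_O^t=p_{O'}\circ p_O=0$, so $(p_O\times p_{O'})_*[\Delta]=0$ by \Cref{prevan1}(1), whence $(p_O\times p_{O'})_*z=\lb(p_O\times p_{O'})_*[\Delta]\rb\cdot\wh e=0$ by \eqref{goodcycle} (valid since $[\Delta]\in\DC(X^2)$ and $p_O,p_{O'}\in\wStab(X^2)$). Summing over all pairs $(O,O')$ yields $(q_f\times q_f)_*z=0$, and \Cref{1applem1} finishes the proof.

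The main obstacle is arranging the decomposition of $q_f$ to be compatible with transposition: the off-diagonal cross terms survive only because orthogonality is preserved by transpose, which is why one must pass to Rosati-orbits of primitive idempotents of $E\otimes\oll$ rather than to the idempotents individually; and the diagonal terms need cohomological dimension at most $2$, which is precisely what (RM) (rank $\le 2$, trivial Rosati so orbits are singletons) and (CM) (rank $\le 1$, orbits of size $\le 2$) supply. The one genuinely non-formal input is the structural fact used in the second paragraph — that (CM) forces $E_2$ to be the entire endomorphism algebra of the relevant factor of $J_f$ — which is what makes the Rosati involution respect the splitting $E\otimes\oll=(E_1\otimes\oll)\times(E_2\otimes\oll)$.
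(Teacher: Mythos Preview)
Your overall strategy is the paper's: reduce via \Cref{1applem1} to $(q_f\times q_f)_*\bigl([\Delta]\cdot\wh e\bigr)=0$, extend $\oll$ to be algebraically closed, decompose $q_f$ along the primitive idempotents of $E\otimes\oll$, and kill the pieces using \Cref{prevan2} for the ``diagonal'' terms and \Cref{prevan1}(1) together with \eqref{goodcycle} for the ``off-diagonal'' ones. The paper treats idempotents individually, applying \Cref{prevan2}(1) on $E_1$ and \Cref{prevan2}(2) on $E_2$; your grouping into Rosati orbits with a uniform appeal to \Cref{prevan2}(1) is a harmless variant.

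The gap is your justification that the Rosati involution preserves $E_2$. Your assertion that (CM) forces each field factor $F$ of $E_2$ to carve out a \emph{simple} factor of $J_f$ with endomorphism algebra exactly $F$ is false. Counterexample: let $C$ be a CM elliptic curve with CM field $K$, put $A=C\times C$ so $\End(A)_\BQ=M_2(K)$, and embed the degree-$4$ field $F=K(\sqrt d)$ (with $d\in K$ a non-square) via $a+b\sqrt d\mapsto\left(\begin{smallmatrix}a&bd\\b&a\end{smallmatrix}\right)$. Then $H^1(A)$ is free of rank $1$ over $F$, yet $A$ is not simple and $F\subsetneq M_2(K)$; for the product polarization the Rosati involution is $M\mapsto\bar M^{\,T}$, which preserves $F$ only when $d\bar d=1$. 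So in general your Rosati-orbit decomposition is undefined. To be fair, the paper's own proof shares this tacit assumption (its case ``$q\neq p^t$'' needs $q\circ p^t=0$, which uses that $p^t$ again lies among the primitive idempotents of $E\otimes\oll$). One clean repair, bypassing Rosati-stability of $E_2$ altogether: for any primitive $p,q\in E_2\otimes\oll$ the motive $(X,p)\otimes(X,q)$ has $H^*$ concentrated in degree $2$ of dimension $\le 1$, hence is $0$ or $\cong\BL$ by \Cref{dim0} and \Cref{10.3}, so its $\Ch^2$ vanishes regardless of whether $q\circ p^t=0$; mixed $E_1$--$E_2$ cross terms reduce by the $S_2$-symmetry of $z$ to the $E_1$ case, where $p^t=p$ makes your argument go through verbatim.
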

\begin{rmk}\label{rmk:RMCM}
Assume $k$ has characteristic 0. By Albert's classification, we have the following.

(1)  Under the first  part of   assumption (RM),  the second part is equivalent to that 
$E$ is a direct sum of totally real number fields.

(2) Under assumption (CM)  holds, $E_2$ is a direct sum of CM fields.
Then ssumption (RM) holds replacing $E_2$ by its the direct sum $E_2'$ maximal totally real subfields.

Thus  assumption (RM) and assumption (CM) can be replaced by  a single assumption (RM) for $E_1\oplus E_2'$. In other words, we essentially only use real multiplication, even when complex multiplication is present. 
  
\end{rmk}
   \begin{proof} 
    Since 
    $q_f$ 
    is
    the image of $1\in E$ in $\DC^1(X^2)
$ under
  the  isomorphism \eqref{Sch3.9'}, by \Cref{1applem1}, only need $ (q_f\times q_f)_* \lb [\Delta] \cdot  \wh e\rb  =0 $. 
After base change, we may assume $\oll$ is algebraically closed. 
Let $p,q\in \DC^1(X^2)$ be projectors correspond to two irreducible idempotents of $E\otimes_\BQ\oll$ under    the  isomorphism \eqref{Sch3.9'}. 
We only need prove this claim: $ (p\times q)_* \lb [\Delta] \cdot  \wh e\rb  =0. $
 We have several cases. If $q = p^t\in E_1\otimes_\BQ\oll$ (so that $q=p$ by \Cref{transdual}),
by   \Cref{prevan2} (1), $ (p\times q)_* \lb [\Delta] \cdot  \wh e\rb  =0. $
If $q = p^t\in E_2\otimes_\BQ\oll$, by   \Cref{prevan2} (2),  the claim holds.
 If $q\neq p^t$,  by \Cref{prevan1} and \eqref{goodcycle},  the claim holds. 
    \end{proof}

    \subsection{ 
 Galois branched cover of $\BP^1$}\label{Galois branched cover}
The simplest use case of \Cref{1appthm}  is probably when $f:X\to X'=\BP^1$ is a (branched) $G$-cover  and 
$e$ is $G$-invariant (i.e.,  a multiple of the canonical class), where $G$ is a finite  group.
Then $p_f=0$ and $J_f=J$ so that the assumption (Base) holds trivially. 
If   $\BQ[G]\to \End(J)_\BQ$ is surjective, by
  \Cref{lem:DCDC1} and \Cref{lem:DCDC2},  the assumption (Stab) holds. If moreover the assumptions (RM) and (CM) hold, then 
  $ (\delta\times \delta)_* \lb [\Delta] \cdot  \wh e\rb  =0. $ 
  The most famous example is probably the class of Fermat curves \cite{KR,Lim}. However, there is a much more elementary reason for   this vanishing. See \Cref{Subcanonical
  curves general}.

Examples as extreme as Fermat curves are rare. 
We want to weaken the  condition that $\BQ[G]\to \End(J)_\BQ$ is surjective, and want to give a criterion for assumptions (RM) and (CM) to hold. 
Assume $k=\BC$ and 
consider the Betti chomology $  H^1(X,\BQ)$ as a $G$-representation. 
This is known as the Hurwitz representation and $J$ decomposes as a product of abelian subvarieties  according to the decomposition of $H^1(X,\BQ)$ as a $G$-representation, as follows \cite{Paul}.

Fix a Wedderburn decomposition
\begin{equation}
  \label{Wedd}
  \BQ[G] = \bigoplus_i \RM_{n_i}(D_i),
\end{equation} 
where $D_i $ is a simple division  algebra over $\BQ$.
Then $V_i: = D_i^{n_i}$ is realized an irreducible rational $G$-representation.
If $D_i$ is a totally real number field, then
the multiplicity of $V_i$ in $H^1(X,\BQ)$ is even by complex conjugation. 
\begin{defn}
  The   (branched) $G$-cover $ X\to  \BP^1$ is said to have $G$-multiplication if 
  \begin{itemize}
    \item for $D_i$   a totally real number field,  
the multiplicity of $V_i$ in $H^1(X,\BQ)$ is 2,
 \item for $D_i$   a CM number field,  
the multiplicity of $V_i$ in $H^1(X,\BQ)$ is 1,
\item otherwise, 
the multiplicity of $V_i$ in $H^1(X,\BQ)$ is 0.
  \end{itemize}
\end{defn}

 Let $e:\BQ[G]\to \End(J_i)$ be the natural map induced  by pushforward of divisors. 
Let $\pi_{i,j}$ denote the idempotent of $\BQ[G]$ which, under \eqref{Wedd}, is zero everywhere except the $i$-th component where it is the matrix with 1 in the $(j,j)$ position and zeros elsewhere.   
   Then $e(\pi_{i,j}) J$'s are isomophic for the same $i$, which we denote by $J_i$.
And $$\dim J_i =  \frac{1}{2} \dim e(\pi_{i,j}) H^1(X,\BQ).$$
 Moreover, $e\lb D_i^{\oplus n_i}\rb\subset \End(J_i)$ if $J_i\neq 0$. 
 Here $D_i$ is the one in   $\BQ[G]$ corresponing to $\pi_{i,j}$ under \eqref{Wedd},i.e.,
 $\pi_{i,j}\circ \BQ[G]\circ \pi_{i,j}$.
So we have the following.
\begin{prop}
  If   the  $G$-cover $ X\to  \BP^1$  has $G$-multiplication, then $J$ has real multiplication.
\end{prop}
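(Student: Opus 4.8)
The plan is to decompose $J$ up to isogeny using the orthogonal idempotents $e(\pi_{i,j})$ and to check, block by block, that each isogeny factor is an abelian variety of $\GL_2$-type over a totally real field (possibly sitting inside a CM field). Since $\sum_{i,j}\pi_{i,j}=1$ in $\BQ[G]$, applying $e$ gives $\mathrm{id}_J=\sum_{i,j}e(\pi_{i,j})$ as a sum of orthogonal idempotents in $\End(J)_\BQ$; hence $J$ is isogenous to $\prod_i J_i^{n_i}$, and it suffices to analyze each $J_i$ with $J_i\neq 0$.

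First I would compute $\dim J_i$ from the $G$-multiplication hypothesis. As a $\BQ[G]$-module, $H^1(X,\BQ)=\bigoplus_i V_i^{\oplus m_i}$, where $m_i$ is the multiplicity of $V_i$. The idempotent $\pi_{i,j}=E_{jj}\in\RM_{n_i}(D_i)$ acts on $V_i=D_i^{n_i}$ as the projection onto one copy of $D_i$, so $e(\pi_{i,j})H^1(X,\BQ)\cong D_i^{\oplus m_i}$ as a $\BQ$-vector space and $\dim_\BQ e(\pi_{i,j})H^1(X,\BQ)=m_i[D_i:\BQ]$. Combined with $\dim J_i=\tfrac12\dim_\BQ e(\pi_{i,j})H^1(X,\BQ)$ this yields $\dim J_i=\tfrac12 m_i[D_i:\BQ]$, while $e$ realizes $D_i=\pi_{i,j}\BQ[G]\pi_{i,j}$ as a subalgebra of $\End(J_i)_\BQ$, injectively because $D_i$ is simple and the map sends $\pi_{i,j}$ to $\mathrm{id}_{J_i}\neq 0$.

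Next I would run the three cases of the definition of $G$-multiplication. If $D_i$ is totally real then $m_i=2$, so $[D_i:\BQ]=\dim J_i$ and $J_i$ has real multiplication by $D_i$, with $H^1(J_i)\cong D_i^{\oplus 2}$ free of rank $2$ over $D_i$. If $D_i$ is a CM field then $m_i=1$, so $[D_i:\BQ]=2\dim J_i$, hence $J_i$ is a CM abelian variety, its maximal totally real subfield $D_i^+$ has degree $\dim J_i$, $J_i$ has real multiplication by $D_i^+$, and $H^1(J_i)\cong D_i$ is free of rank $2$ over $D_i^+$. Otherwise $m_i=0$ and $J_i=0$. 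Setting $E:=\prod_{i:\,J_i\neq 0}\prod_{j=1}^{n_i}F_i$ with $F_i:=D_i$ in the totally real case and $F_i:=D_i^+$ in the CM case, we obtain a commutative semisimple subalgebra of $\End(J)_\BQ$ which is a product of totally real fields and over which $H^1(X,\BQ)$ is free of rank $2$; the Rosati involution restricts to the identity on $E$ because it is positive and $E$ is a product of totally real fields (Albert's classification), after replacing the canonical polarization by a product polarization compatible with $J\sim\prod_i J_i^{n_i}$. This is exactly the assertion that $J$ has real multiplication.

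I do not expect a genuine obstacle here: the argument is bookkeeping with the Wedderburn blocks, the multiplicities, and the three facts about the $J_i$ recalled just before the statement. The two points needing a little care are the rank count over $E$ in the CM case --- a copy of a CM field has rank $2$ over its totally real subfield --- and the standard fact that a positive involution restricts to the identity on a totally real subfield, so that the Rosati condition in the definition of real multiplication is satisfied.
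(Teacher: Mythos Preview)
Your argument is correct and follows exactly the route the paper takes: the paper simply records that $\dim J_i=\tfrac12\dim e(\pi_{i,j})H^1(X,\BQ)$ and $D_i\hookrightarrow\End(J_i)_\BQ$, then writes ``So we have the following'' --- you have just filled in the bookkeeping (the case split on $D_i$ totally real, CM, or otherwise) that the paper leaves implicit. One small comment: your closing remark about ``replacing the canonical polarization by a product polarization'' is unnecessary for the Proposition as stated and would be the wrong move for the downstream application to \Cref{1appthm}, where the Rosati involution is tied to the canonical polarization via \Cref{transdual}; the paper instead handles the Rosati condition through \Cref{rmk:RMCM} (1), and in the $G$-cover setting one can also argue directly that the canonical Rosati involution restricts to $g\mapsto g^{-1}$ on the image of $\BQ[G]$, hence is positive and fixes each totally real $D_i$ (or $D_i^+$).
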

  
By \Cref{lem:DCDC1} and \Cref{lem:DCDC2}, the assumption (Stab) in 
 \Cref{1appthm} also holds with respect  to the real multiplication obtained in the last proposition. By  \Cref{1appthm} and the remark following it, we have the following.
\begin{thm}\label{0egthm} Let $e$ be the canonical class   of $X$ divided by its degree.
If   the  $G$-cover $ X\to  \BP^1$  has real multiplication, then
  $ (\delta\times \delta)_* \lb [\Delta] \cdot  \wh e\rb  =0. $
\end{thm}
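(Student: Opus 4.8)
\textbf{Proof plan for \Cref{0egthm}.}
The strategy is to reduce \Cref{0egthm} to \Cref{1appthm} by taking $f\colon X\to X'=\BP^1$ to be the given $G$-cover and choosing $e$ to be the canonical class divided by its degree. Since $X'=\BP^1$ has trivial Jacobian, $J'=0$, so $p_f=0$, $q_f=\delta$, and $J_f=J$; thus assumption (Base) holds vacuously. It therefore suffices to produce the subalgebra $E=E_1\oplus E_2\subset\End(J)_\BQ$ satisfying (RM), (CM) and (Stab), and this is exactly what the $G$-multiplication hypothesis delivers through the Hurwitz representation $H^1(X,\BQ)$.

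First I would recall the isotypic decomposition of $J$ up to isogeny according to the Wedderburn decomposition \eqref{Wedd} of $\BQ[G]$, as in \cite{Paul}: the map $e\colon\BQ[G]\to\End(J)_\BQ$ induced by pushforward of divisors sends the idempotents $\pi_{i,j}$ to projectors, and the images $e(\pi_{i,j})J$ are mutually isogenous for fixed $i$, defining abelian varieties $J_i$ with $\dim J_i=\tfrac12\dim e(\pi_{i,j})H^1(X,\BQ)$ and $e\bigl(D_i^{\oplus n_i}\bigr)\subset\End(J_i)$ when $J_i\neq 0$. Having real multiplication means precisely that for each $i$ with $D_i$ totally real the multiplicity of $V_i$ in $H^1(X,\BQ)$ is $2$ (so $\dim J_i=[D_i:\BQ]$, i.e.\ $J_i$ is of $\GL_2$-type over $D_i$), for each CM field $D_i$ the multiplicity is $1$ (so $\dim J_i=\tfrac12[D_i:\BQ]$), and otherwise the multiplicity is $0$. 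I would then set $E_1=\bigoplus_{D_i\ \mathrm{real}}D_i$ and $E_2=\bigoplus_{D_i\ \mathrm{CM}}D_i$, viewed inside $\End(J)_\BQ$ via $e$ (this uses that the non-real, non-CM components contribute nothing). Condition (RM): for each totally real $D_i$, $H^1(J_i)$ is free of rank $2$ over $D_i\otimes\oll$ by the multiplicity-$2$ condition; triviality of the Rosati involution restricted to $E_1$ follows because a Rosati involution on a totally real field is necessarily the identity (Albert), so the first part of (RM) holds and \Cref{rmk:RMCM}(1) then gives the rest. Condition (CM): for each CM $D_i$, $H^1(J_i)$ is free of rank $1$ over $D_i\otimes\oll$ by the multiplicity-$1$ condition.

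For condition (Stab) I would argue as in the paragraph preceding the statement: since the $G$-action on $X$ fixes the canonical class, the graphs $\Gamma_g$ for $g\in G$ are $e$-stabilizing correspondences, so the image of $\BQ[G]$ in $\Corr(X^2)$ under $g\mapsto\Gamma_g$ lies in $\Stab(X^2)$. By \Cref{lem:DCDC1} and \Cref{lem:DCDC2}, composing with $\delta\circ(-)\circ\delta$ and then with the isomorphism \eqref{Sch3.9'} identifies the $\DC^1(X^2)$-projection of this image with the image of $e\colon\BQ[G]\to\End(J)_\oll$; since $E\subset e(\BQ[G])_\BQ$, its preimage in $\DC^1(X^2)$ is contained in the subalgebra generated by $\Stab(X^2)$ and $\delta$, i.e.\ in $\wStab(X^2)$, establishing (Stab). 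With (Base), (RM), (CM), (Stab) all verified, \Cref{1appthm} yields $(\delta\times\delta)_*\bigl([\Delta]\cdot\wh e\bigr)=0$, which is the assertion.

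The only genuine point requiring care — and the step I expect to be the main obstacle — is the bookkeeping that the $D_i\otimes\oll$-freeness statements in (RM) and (CM) really follow from the representation-theoretic multiplicities of $V_i$ in $H^1(X,\BQ)$, passing correctly between the $G$-module $H^1(X,\BQ)$, the $\End(J)_\BQ$-module structure on $H^1(J)$, and the extension of scalars to $\oll$; in particular one must check that $H^1(J_i)$ as a module over $D_i$ (not merely over $\BQ$) has the claimed rank, which is where the hypothesis ``multiplicity $2$ resp.\ $1$'' (rather than just a dimension count) is used, and that $e$ being surjective onto each $\RM_{n_i}(D_i)$-block is not needed — only that $D_i$ acts, which is automatic from $G$-equivariance. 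Everything else is a direct application of the machinery already assembled in \Cref{Faber--Pandharipande type 0-cycles} together with Albert's classification via \Cref{rmk:RMCM}.
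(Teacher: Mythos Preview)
Your proposal is correct and follows essentially the same route as the paper (whose proof is the short paragraph immediately preceding the theorem): take $f\colon X\to\BP^1$ so that $J_f=J$ and (Base) is vacuous, read off (RM)/(CM) from the Hurwitz decomposition under the $G$-multiplication hypothesis, verify (Stab) via \Cref{lem:DCDC1} and \Cref{lem:DCDC2} using that the graphs $\Gamma_g$ lie in $\Stab(X^2)$, and invoke \Cref{1appthm} together with \Cref{rmk:RMCM}. One small bookkeeping correction to the point you already flagged: you need $E_1=\bigoplus_{D_i\ \text{real}}D_i^{\oplus n_i}$ and $E_2=\bigoplus_{D_i\ \text{CM}}D_i^{\oplus n_i}$ (one diagonal copy of $D_i$ for each $j=1,\dots,n_i$), not a single $D_i$ per $i$, so that $1_E$ maps to $\id_{J_f}$ and hence to $q_f=\delta$ under \eqref{Sch3.9'}, as the proof of \Cref{1appthm} requires.
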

\begin{eg}
  Assume  the  $G$-cover $ X\to  \BP^1$  has $G$-multiplication. 
If $D_i=\BQ$ or an imaginary quadratic field, then $J_i$ is  isogenous to a multiple of an elliptic curve. 
So the simplest case of  the  $G$-cover $ X\to  \BP^1$  is when $\dim J_i\neq 0$ only when $D_i=\BQ$.  Then $J$ is a  isogenous to direct sum of elliptic curves (a completely decomposable Jacobian). 
In \cite[Table 2, Table 3]{Paul},  such examples that are also nonhyperelliptic   are found  in genus up to 10, except 8\footnote{The genus 8 family in loc. cit. actually does not exist, as one can deduce from the character table of the corresponding group. Or one can check LMFDB directly. This has been communicated to the author.}. In genus 8, from LMFDB, we found such a curve with automorphism group $\PGL_2(\BF_7)$.

\end{eg} 
  \subsection{Cyclic unramified coverings of  hyperelliptic curves}\label{First example}
  We follow the discussion in \cite{Ort} and \cite[Section 2]{NOPS}.   Assume that $d$ is odd and  coprime to the characteristic of $k = \ol k$. 
 Let $f:X\to X'$ be 
 a cyclic unramified covering of degree $d$ where $X'\to \BP^1$ is hyperelliptic of  genus $g' \geq2$.
Let  $\sigma:X\to X$  an   order $d$ automorphism generating the Galois group $\Gal(X/X')$.
 Then $X\to \BP^1
 $ is a Galois branched cover, and  the hyperelliptic involution of 
 $X'$ lifts to an involution $\tau\in \Gal(X/\BP^1)$
 such that $\Gal(X/\BP^1)$ is the dihedral group of order $2d$ generated by $\sigma$ and $\tau$
 with $\tau^2 = \sigma^d=1,\tau\sigma \tau = \sigma^{-1}$. (However, the current situation differs from the last section of a $G$-cover over $\BP^1$ in the map $f$.)
Since $d$ is odd, such lifts are unique up to conjugation, given by $\tau\sigma^i$ for $i=0,...,d-1$.
     Let $C_i = X/\pair{\tau\sigma^i}$ for $i=0,...,d-1$, which are all isomorphic of genus $ \frac{(d-1)(g'-1)}{2}$,
      and $J_i=\Pic_{C_i/k}^0$ is an abelian subvariety of $J_f$ (in the isogeny category, 
      where we defined $J_f$\footnote{Actually, one can define the Prym variety $J_f$ in the usual way in stead of the  isogeny category, 
      and the statements here and below still hold.}).  
      Then  
      \begin{equation}\label{J0J1}
            J_0\cap J_1 = 0,\quad J_f=J_0\times J_1. 
      \end{equation}

\begin{thm}\label{1egthm} Let $e$ be the canonical class   of $X$ divided by its degree.
If $g'=2$ and $d$ is an odd prime, then
  $ (\delta\times \delta)_* \lb [\Delta] \cdot  \wh e\rb  =0. $
\end{thm}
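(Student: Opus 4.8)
The plan is to apply \Cref{1appthm} to the cover $f\colon X\to X'$ itself, exploiting the real multiplication on its Prym variety. Since $f$ is unramified of degree $d$, the canonical class of $X$ is $f^*$ of that of $X'$, so $f_*e=e'$ for the normalized canonical classes, and by \eqref{J0J1} the isogeny decomposition reads $J=J'\times J_f$ with $J'=\Pic^0_{X'/k}$ of dimension $g'=2$ and $J_f\sim J_0\times J_1$, each $\dim J_i=(d-1)(g'-1)/2=(d-1)/2$. Write $D_d=\Gal(X/\BP^1)=\langle\sigma,\tau\rangle$ for the dihedral Galois group. Then $\sigma_*$ is the identity on $J'=f^*J'$, while $\sum_{i=0}^{d-1}\sigma_*^i=f^*f_*$ vanishes on $J_f=\ker f_*$; since $d$ is prime this gives $\BQ[\sigma_*]|_{J_f}\cong\BQ(\zeta_d)$, and as $\tau\sigma^i$ conjugates $\sigma_*$ to $\sigma_*^{-1}$, the totally real field $K:=\BQ(\zeta_d+\zeta_d^{-1})$ descends to a faithful action on each $J_i=\Pic^0_{C_i/k}$. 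Because $[K:\BQ]=(d-1)/2=\dim J_i$, each $J_i$ has real multiplication by $K$. Equivalently, the $\BQ[D_d]$-module $H^1(X,\BQ)$ has $W$-isotypic part $W^{\oplus2}=H^1(J_f,\BQ)$ where $W$ is the unique $(d-1)$-dimensional irreducible $\BQ[D_d]$-representation occurring, and the nonabelian Wedderburn component of $\BQ[D_d]=\BQ\times\BQ\times\RM_2(K)$ maps, via pushforward of divisors, isomorphically onto a subalgebra $\RM_2(K)\subseteq\End(J_f)_\BQ$ with unit $q_f=1_{J_f}$, which is stable under the Rosati involution because automorphisms of $X$ act by isometries of the canonically polarized $J$. (This structure, for $g'=2$ and $d$ an odd prime, is worked out in \cite{Ort} and \cite[Section 2]{NOPS}.)

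I would then invoke \Cref{1appthm} with this $f$, taking $E_2=0$ and $E=E_1\cong K\times K$ a maximal commutative subalgebra of the above $\RM_2(K)$, chosen by the normalization in the proofs of \Cref{lem:res} (2) and \Cref{lem:norm}: the center of $\RM_2(K)$ is the totally real field $K$, so the Rosati involution is $K$-linear of the form $x\mapsto g^{-1}x^Tg$ with $g$ symmetric, and diagonalizing $g$ puts $E_1$ in the diagonal, making the Rosati involution restrict trivially to $E_1$; note $1_{E_1}=q_f=\delta-p_f$ in the notation of \Cref{1applem1}. The four hypotheses are then checked as follows. (Base) holds since $g'=2\le3$: by Faber--Pandharipande the Faber--Pandharipande $0$-cycle of $X'$ is rationally equivalent to $0$, hence $(\delta'\times\delta')_*\lb[\Delta']\cdot\wh e'\rb=0$. (RM): the $\RM_2(K)$-module $H^1(J_f,\BQ)=W^{\oplus2}$ is two copies of the simple module $K^{2}$, whose restriction to the diagonal $E_1\cong K\times K$ is free of rank $1$; hence $H^1(J_f)\otimes_E E_1$ is free of rank $2$ over $E_1\otimes\oll$, and the Rosati involution is trivial on $E_1$ by construction. (CM) is vacuous since $E_2=0$. (Stab): the $D_d$-action on $X$ fixes the canonical class (a multiple of $e$) and $X/D_d\cong\BP^1$, so by the remark following \Cref{defngoodcycle} there is a map $\oll[D_d]\to\Stab(X^2)$ with $\delta\in\Stab(X^2)$; by \Cref{lem:DCDC1} and \Cref{lem:DCDC2}, sending $g$ to $\delta\circ\Gamma_g\circ\delta$ gives a map $\oll[D_d]\to\Stab(X^2)\cap\DC^1(X^2)$ that under \eqref{Sch3.9'} realizes the pushforward action, whose image contains $\RM_2(K)\otimes\oll\supseteq E_1\otimes\oll$; so the preimage of $E_1$ in $\DC^1(X^2)$ lies in $\wStab(X^2)$. \Cref{1appthm} now yields $(\delta\times\delta)_*\lb[\Delta]\cdot\wh e\rb=0$.

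The step I expect to be the main obstacle is the first paragraph: correctly identifying the endomorphism structure of the Prym, i.e.\ translating among the group-ring decomposition $\BQ[D_d]=\BQ\times\BQ\times\RM_2(K)$, the Hurwitz representation $H^1(X,\BQ)$, and the isogeny decomposition $J_f\sim J_0\times J_1$, and checking that the resulting $\RM_2(K)\subseteq\End(J_f)_\BQ$ is Rosati-stable with totally real center $K$. This last point is exactly where the hypotheses enter: $g'=2$ forces $\dim J_i=[K:\BQ]$, so that each $J_i$ is genuinely of $\GL_2$-type rather than merely satisfying $\dim J_i>[K:\BQ]$ as for larger $g'$, and $d$ an odd prime forces $\BQ[D_d]$ to have a single nonabelian component $\RM_2(K)$ with $\BQ(\zeta_d)$ a field. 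I would import these computations from \cite{Ort} and \cite[Section 2]{NOPS}. Granting the structure, the remaining verifications — $f_*e=e'$, the identification of $q_f$ with $1_{E_1}$ and with $\delta-p_f$, and the module-theoretic rank count in (RM) — are routine, and the conclusion follows from \Cref{1appthm}.
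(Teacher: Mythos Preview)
Your proof is correct and follows the same strategy as the paper: apply \Cref{1appthm} to $f:X\to X'$ with $E_2=0$ and $E_1\cong K\times K$ for $K=\BQ(\zeta_d+\zeta_d^{-1})$, checking (Base), (RM), (Stab). The paper packages the same content more concretely: it takes $E_1$ to be the copy of $K\oplus K$ sitting inside $\End(J_0)\oplus\End(J_1)$ via the geometric splitting $J_f=J_0\times J_1$, cites \cite[Remark~2.3, Proposition~4.1]{NOPS} directly for the real multiplication and Rosati-triviality, verifies (Base) via hyperellipticity of $X'$ (so $e'$ is the class of a Weierstrass point), and for (Stab) writes down the preimage of $\zeta^i+\zeta^{-i}\in K\subset\End(J_0)$ explicitly as $\delta\circ z\circ\delta$ with $z$ the graph of $\frac{1+\tau}{2}(\sigma^i+\sigma^{-i})\frac{1+\tau}{2}$. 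Your route through the Wedderburn decomposition $\BQ[D_d]=\BQ\times\BQ\times\RM_2(K)$ and the full $D_d$-action for (Stab) is a valid and somewhat cleaner alternative; the two choices of $E_1$ need not coincide as subalgebras, but either works.

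One small imprecision worth fixing: you invoke \Cref{lem:res}(2) and \Cref{lem:norm} to diagonalize the Rosati involution on $\RM_2(K)$, but those are stated for $\oll$ algebraically closed, whereas \Cref{1appthm} requires $E_1\subset\End(J_f)_\BQ$. The repair is immediate---over any field of characteristic $\neq 2$, a nondegenerate symmetric bilinear form admits an orthogonal basis, and positivity of the Rosati involution guarantees the form on $K^2$ is anisotropic---so the Rosati-fixed diagonal $K\times K$ already exists inside $\RM_2(K)$ over $K$, not just after base change. In fact the paper's concrete idempotents (the images of $\frac{1+\tau}{2}$ and $\frac{1+\tau\sigma}{2}$ in $\RM_2(K)$) exhibit such a diagonal directly, since $\tau^\dagger=\tau^{-1}=\tau$ and $(\tau\sigma)^\dagger=(\tau\sigma)^{-1}=\tau\sigma$.
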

\begin{proof}

  Let $\zeta$ be a primitive $d$-th root of unity.
  By    \cite[Remark 2.3]{NOPS}, we have a subalgebra
  $  \BQ(\zeta+\zeta ^{-1})\subset \End(J_i)_\BQ$, which is a totally real field.
  By \cite[Proposition 4.1]{NOPS},  the restriction of  the Rosati involution on  $  \BQ(\zeta+\zeta ^{-1})$ is trivial.
   When $g'=2$ and $d$ is an odd prime, $[\BQ(\zeta+\zeta ^{-1}):\BQ]=\dim J_i = (d-1)/2$. 
   We want to apply \Cref{1appthm} with 
   $$E_1=\BQ(\zeta+\zeta ^{-1})^{\oplus 2}\subset \End(J_0)\oplus \End(J_1)$$ and $E_2=0$. 
    The assumption (BASE) follows from the hyperellipticity of $X'$ and the unramifiedness of $f$ (so that $e'$ is the class of a Weierstrass point).
      The assumption (RM) follows from the discussion above.
      The assumption (CM) is void.
It remains to show that the preimage of $ \BQ(\zeta+\zeta ^{-1})\subset \End(J_i)_\BQ$  in $\DC^1(X^2)
$ under
  the  isomorphism \eqref{Sch3.9'} is contained in $ \wStab(X^2)$, and it is enough for $i=0$.
For this, we need to recall the construction of the subalgebra 
$  \BQ(\zeta+\zeta ^{-1})\subset \End(J_0)_\BQ$ in \cite[Remark 2.3]{NOPS}.
Indeed, $\sigma^i+\sigma^{-i}$ stabilizes  $J_0$ and $\zeta^i+\zeta ^{-i}$ is its preimage in $\End(J_0)_\BQ$. 
 Then by \Cref{lem:DCDC1} and \Cref{lem:DCDC2},   the image of $\zeta^i+\zeta ^{-i}$ in $\DC^1(X^2)$  is 
$  \delta\circ z\circ \delta,$ 
where $z$ is the graph of  $ \frac{1+\tau}{2} \circ (\sigma^i+\sigma^{-i})\circ\frac{1+\tau}{2}  $.
So it is in $ \wStab(X^2)$
\end{proof}

 \subsection{Second application}
 To prove the vanishing of more general Faber-Pandharipande type 0-cycles, we need some preparations. 
 
 \begin{prop}\label{prop:good}

(1)  Assume that $(A_1,\iota_1^\dag)\not\sim ( A_2,\iota_2)$ and 
 $\delta_{A_1,\iota_1}, \delta_{ A_2,\iota_2} \in \wStab(X^2)$.
 Then 
  $$(\delta_{A_1,\iota_1}\times \delta_{ A_2,\iota_2})_*  \lb  \Ch^1(X^2)\cdot  \wh e\rb=0.$$
  
 (2)     For     $a_1,...,a_n$ and $b_1,...,b_n$ in  $ \wStab(X^2)$ such that 
  $\sum_{i=1}^n b_i\circ a_i^t =\delta _{A,\iota}$,  we have 
    \begin{align*} (\delta _{A,\iota^\dag}\times \delta _{A,\iota })_*\lb   \Ch^1(X^2) \cdot  \wh e\rb=  \lb\DC^1(X^2) \times \delta_{A,\iota}\rb_*   \lb \sum_{i=1}^n( a_i\times b_i)_*\lb [\Delta]\cdot  \wh e\rb \rb.
    \end{align*}

\end{prop}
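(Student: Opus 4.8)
The plan is to prove \Cref{prop:good} by reducing both assertions to the purely algebraic statements already established in \Cref{algeprop}, the only new ingredient being that all the idempotents involved now lie in $\wStab(X^2)$, so that \Cref{lem:DCDC3} together with the identity \eqref{goodcycle} lets us ``commute'' the cap product with $\wh e$ past the pushforward operations. Concretely, for part (1), I would start from \Cref{lem:DCDC3}, which gives
$$\lb \delta_{A_1,\iota_1}\times \delta_{A_2,\iota_2}\rb_*\lb \Ch^1(X^2)\cdot \wh e\rb=\lb \delta_{A_1,\iota_1}\times \delta_{A_2,\iota_2}\rb_*\lb \DC^1(X^2)\cdot \wh e\rb,$$
and then, since $\delta_{A_1,\iota_1},\delta_{A_2,\iota_2}\in\wStab(X^2)$ and any element of $\DC^1(X^2)$ lies in $\DC(X^2)$, apply \eqref{goodcycle} with $c=\delta_{A_1,\iota_1}$, $c'=\delta_{A_2,\iota_2}$ to move $\wh e$ outside:
$$\lb \delta_{A_1,\iota_1}\times \delta_{A_2,\iota_2}\rb_*\lb \DC^1(X^2)\cdot \wh e\rb=\Bigl(\lb \delta_{A_1,\iota_1}\times \delta_{A_2,\iota_2}\rb_* \DC^1(X^2)\Bigr)\cdot \wh e.$$
By \Cref{algeprop} (1), the hypothesis $(A_1,\iota_1^\dag)\not\sim(A_2,\iota_2)$ forces $\lb \delta_{A_1,\iota_1}\times \delta_{A_2,\iota_2}\rb_*\Ch^1(X^2)=0$, hence a fortiori the pushforward of $\DC^1(X^2)$ vanishes, and capping with $\wh e$ gives $0$.

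For part (2), the strategy is to run the proof of \Cref{algeprop} (2) verbatim but carrying a factor of $\wh e$ throughout, legitimized at each step by \eqref{goodcycle} and \Cref{lem:DCDC3}. First, using \Cref{lem:DCDC3} and \eqref{goodcycle} as in part (1) (all of $\delta_{A,\iota^\dag}$, $\delta_{A,\iota}$ lie in $\wStab(X^2)$ by hypothesis since they equal $\sum b_i\circ a_i^t$'s and transposes thereof up to the appropriate equivalence — more precisely $\delta_{A,\iota}=\sum_i b_i\circ a_i^t\in\wStab(X^2)$ and $\delta_{A,\iota^\dag}=\delta_{A,\iota}^t\in\wStab(X^2)$), reduce the left-hand side to
$$\lb \delta_{A,\iota^\dag}\times \delta_{A,\iota}\rb_*\lb \Ch^1(X^2)\cdot\wh e\rb=\Bigl(\lb \delta_{A,\iota^\dag}\times \delta_{A,\iota}\rb_*\DC^1(X^2)\Bigr)\cdot\wh e.$$
By the first equality of \Cref{algeprop} (2) this equals $\bigl(\delta_{A,\iota}\circ\DC^1(X^2)\bigr)\cdot\wh e$. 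On the right-hand side, \Cref{pushcomp2} gives $\sum_i(a_i\times b_i)_*[\Delta]=\sum_i b_i\circ[\Delta]\circ a_i^t=\sum_i b_i\circ a_i^t=\delta_{A,\iota}$; applying \eqref{goodcycle} with $c=a_i$, $c'=b_i$ (which lie in $\wStab(X^2)$ by assumption) to each term shows $\sum_i(a_i\times b_i)_*\lb[\Delta]\cdot\wh e\rb=\bigl(\sum_i(a_i\times b_i)_*[\Delta]\bigr)\cdot\wh e=\delta_{A,\iota}\cdot\wh e$, and then $\lb\DC^1(X^2)\times\delta_{A,\iota}\rb_*$ of this, using \Cref{pushcomp2} and \eqref{goodcycle} once more together with \Cref{lem:DCDC3}, equals $\bigl(\delta_{A,\iota}\circ\delta_{A,\iota}\circ\DC^1(X^2)\bigr)\cdot\wh e=\bigl(\delta_{A,\iota}\circ\DC^1(X^2)\bigr)\cdot\wh e$ by \eqref{deltai}. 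Comparing the two sides gives the claim.

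The main obstacle I anticipate is bookkeeping rather than conceptual: one must be careful that every cycle to which \eqref{goodcycle} is applied genuinely lies in $\wStab(X^2)$ (for the $c,c'$ slots) and that the argument $z$ lies in $\DC(X^2)$ — the latter holds automatically for $z\in\DC^1(X^2)\subset\DC(X^2)$, but when $z$ ranges over all of $\Ch^1(X^2)$ one must first invoke \Cref{lem:DCDC3} to replace $\Ch^1(X^2)\cdot\wh e$ by $\DC^1(X^2)\cdot\wh e$ before \eqref{goodcycle} is applicable. A secondary point of care is that $\DC^1(X^2)$ is closed under transposition and under composition with the $\delta_{A,\iota}$'s, so that all intermediate expressions such as $\delta_{A,\iota}\circ\DC^1(X^2)$ and $\delta_{A,\iota}\circ\delta_{A,\iota}\circ\DC^1(X^2)$ indeed consist of cycles in $\DC^1(X^2)$, keeping us inside the domain where \eqref{goodcycle} applies. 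Once these membership checks are organized, the proof is a mechanical translation of \Cref{algeprop} with $\wh e$ attached.
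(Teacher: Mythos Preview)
Your proposal is correct and takes essentially the same approach as the paper: both invoke \Cref{lem:DCDC3} to replace $\Ch^1(X^2)\cdot\wh e$ by $\DC^1(X^2)\cdot\wh e$, then use \eqref{goodcycle} (with the $\wStab(X^2)$-membership of the relevant idempotents) to commute $\wh e$ past the pushforwards, reducing everything to \Cref{algeprop}. The only cosmetic difference is that for part~(2) the paper applies the second equality of \Cref{algeprop}~(2) directly and then moves $\wh e$ inward twice, whereas you unfold both sides to the common value $\bigl(\delta_{A,\iota}\circ\DC^1(X^2)\bigr)\cdot\wh e$---an equivalent reorganization of the same computation.
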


  \begin{proof} 
  By \Cref{lem:DCDC3} and \eqref{goodcycle}, we have 
  $$(\delta_{A_1,\iota_1}\times \delta_{ A_2,\iota_2})_* \lb  \Ch^1(X^2)\cdot  \wh e\rb
  = \lb(\delta_{A_1,\iota_1}\times \delta_{ A_2,\iota_2})_*  \DC^1(X^2)\rb \cdot  \wh e.$$
  Then  
(1) follows from  \Cref{algeprop} (1). And the left hand side of the equation in (2) becomes 
$ \lb(\delta _{A,\iota^\dag}\times \delta _{A,\iota })_*  \DC^1(X^2)\rb \cdot  \wh e$.
   Now we prove (2). By    \Cref{algeprop}  (2), we have    $$ \lb(\delta _{A,\iota^\dag}\times \delta _{A,\iota })_*  \DC^1(X^2)\rb \cdot  \wh e=\lb \lb  \DC^1(X^2) \times \delta _{A,\iota}\rb_*   \lb \sum_{i=1}^n( a_i\times b_i)_*[\Delta]\rb\rb \cdot  \wh e.$$
  Applying \eqref{goodcycle} again and twice, this equals
   \begin{align*} 
    \lb\DC^1(X^2) \times \delta_{A,\iota}\rb_*   \lb \sum_{i=1}^n( a_i\times b_i)_*\lb [\Delta]\cdot  \wh e\rb \rb.
    \end{align*}  
     \end{proof}
    \begin{thm}\label{thm:goodcycle}
      Assume 
 for any  simple isogeny factor $A$ of $ J_{X}$,  $\End(A)_\BQ$  is a field such that
    \begin{itemize}
      \item[(RM)]    $\dim A\leq  [\End(A)_\BQ:\BQ]$ and
       the restriction of  the Rosati involution on $A$ is trivial,
     or 

  \item[(CM)]   
 $2\dim A\leq  [\End(A)_\BQ:\BQ]$.
  \end{itemize}
  If   $\DC^1(X^2)\subset \wStab(X^2)$,
then $$ (\delta\times \delta)_* \lb  \Ch^1(X^2)\cdot  \wh e\rb  =0. $$

   \end{thm}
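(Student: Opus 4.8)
The plan is to reduce the vanishing of $(\delta\times\delta)_*\lb\Ch^1(X^2)\cdot\wh e\rb$ to a sum of pieces indexed by pairs of simple isogeny factors, and then invoke the vanishing statements already proved in \Cref{Vanishing results}. First I would pass to a field extension: after base change we may assume $\oll$ is algebraically closed, and by \Cref{lem:DCDC3} together with \eqref{goodcycle} (which applies because $\DC^1(X^2)\subset\wStab(X^2)$), it suffices to show $\lb(\delta\times\delta)_*\DC^1(X^2)\rb\cdot\wh e=0$, i.e.\ to control $(\delta\times\delta)_*\DC^1(X^2)$ after multiplying by $\wh e$. Using the isomorphism \eqref{Sch3.9'} and the decomposition \eqref{eqsum1}, write $\delta=\sum_{\{(A,\iota)\}/\sim}\delta_{A,\iota}$, so that $(\delta\times\delta)_*\lb\Ch^1(X^2)\cdot\wh e\rb=\sum (\delta_{A_1,\iota_1}\times\delta_{A_2,\iota_2})_*\lb\Ch^1(X^2)\cdot\wh e\rb$.

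Next I would dispose of the off-diagonal terms. By \Cref{prop:good} (1), whenever $(A_1,\iota_1^\dag)\not\sim(A_2,\iota_2)$ the corresponding summand vanishes (the hypothesis $\delta_{A_i,\iota_i}\in\wStab(X^2)$ follows from $\DC^1(X^2)\subset\wStab(X^2)$). So only the ``diagonal'' terms with $(A_2,\iota_2)\sim(A_1,\iota_1^\dag)$ survive, i.e.\ terms of the shape $(\delta_{A,\iota^\dag}\times\delta_{A,\iota})_*\lb\Ch^1(X^2)\cdot\wh e\rb$. For each such term, \Cref{prop:good} (2) rewrites it — choosing $a_1,\dots,a_n,b_1,\dots,b_n\in\wStab(X^2)$ with $\sum_i b_i\circ a_i^t=\delta_{A,\iota}$, which is possible because $\DC^1(X^2)\subset\wStab(X^2)$ — as $\lb\DC^1(X^2)\times\delta_{A,\iota}\rb_*\lb\sum_i(a_i\times b_i)_*\lb[\Delta]\cdot\wh e\rb\rb$. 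Then, further decomposing $\delta_{A,\iota}=\sum_{\phi\in\cB_{A,\iota}}\delta_\phi$ via \eqref{eqsum2} and decomposing the left factor $\DC^1(X^2)$ similarly into $\delta_{\phi'}$-pieces, it is enough to show $(\delta_{\phi'}\times\delta_\phi)_*\lb[\Delta]\cdot\wh e\rb$ and, after reassembling, the relevant $S_2$-invariant combination vanishes. More directly: since $[\Delta]\cdot\wh e\in\Ch^2(X^2)$ is symmetric under $S_2$, it suffices to apply \Cref{prop:van1}: in case (RM), choosing the basis $\cB_{A,\iota}$ as in \Cref{lem:norm} so that $\delta_\phi^t=\delta_\phi$, part (1) gives $(\delta_\phi\times\delta_\phi)_*\lb[\Delta]\cdot\wh e\rb=0$ because $\dim A\le[\End(A)_\BQ:\BQ]$; in case (CM), part (2) gives $(\delta_\phi^t\times\delta_\phi)_*\lb[\Delta]\cdot\wh e\rb=0$ because $2\dim A\le[\End(A)_\BQ:\BQ]$, and the $S_2$-invariance promotes this to the vanishing of $(\delta_\phi\times\delta_\phi)_*$ as well. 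Cross terms $\delta_{\phi_1}\times\delta_{\phi_2}$ with $\phi_1\neq\phi_2$ vanish by \Cref{prevan1} (1) together with \eqref{dd0} and the compatibility \eqref{goodcycle}.

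Assembling: every summand in the decomposition of $(\delta\times\delta)_*\lb\Ch^1(X^2)\cdot\wh e\rb$ is zero, hence the whole expression vanishes. The main obstacle, and the step requiring the most care, is the bookkeeping in the middle: one must be sure that after applying \Cref{prop:good} (2) the resulting cycle $\sum_i(a_i\times b_i)_*\lb[\Delta]\cdot\wh e\rb$ really is handled by the $S_2$-invariant vanishing statements — that is, that the symmetry of $[\Delta]\cdot\wh e$ survives the manipulations and that the ``base'' pieces $\delta_{A,\iota}$ with $\iota^\dag\neq\iota$ are correctly paired with their transposes so that one genuinely lands in the hypotheses of \Cref{prop:van1} (1) (for $\iota^\dag=\iota$, the RM case) rather than needing a statement we do not have. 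The reduction to $\oll$ algebraically closed and $\End(A)_\BQ$ a field — which the theorem's hypotheses grant — is what makes \Cref{lem:norm} and \Cref{prop:van1} applicable, so checking those hypotheses persist under the base change is the other point to verify.
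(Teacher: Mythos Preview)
Your proposal is correct and follows the same approach as the paper. The paper makes one simplifying choice you leave implicit: it takes $a_i=\delta_\phi^t$ and $b_i=\delta_\phi$ for $\phi\in\cB_{A,\iota}$ in \Cref{prop:good}~(2) (so $\sum_i b_i\circ a_i^t=\sum_\phi\delta_\phi=\delta_{A,\iota}$ by \eqref{ddd} and \eqref{eqsum2}), which lands the reduction directly on $(\delta_\phi^t\times\delta_\phi)_*\lb[\Delta]\cdot\wh e\rb=0$ and makes the cross-term discussion and the extra $(\delta_\phi\times\delta_\phi)_*$ vanishing in the CM case unnecessary.
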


  \begin{rmk}
An analog of \Cref{rmk:RMCM} on  Albert's classification holds here too.
\end{rmk}

     \begin{proof} 
     By \Cref
     {prop:good} (1),  
   if  $(A_1,\iota_1^\dag)\not\sim( A_2,\iota_2)$, then 
  $$(\delta_{A_1,\iota_1}\times \delta_{ A_2,\iota_2})_* \lb  \Ch^1(X^2)\cdot  \wh e\rb =0.$$
   By  \eqref{eqsum1} (the sum of $(A,\iota)$'s up to the equivalence "$ \sim $" is $\delta$),  we only need to show that 
    \begin{equation}\label{final1}
    (\delta_{A,\iota^\dag}\times \delta_{ A,\iota})_* \lb  \Ch^1(X^2)\cdot  \wh e\rb =0.
       \end{equation} 
   We want to apply \Cref
     {prop:good}  (2). We need to choose $a_i,b_i$ in \Cref
     {prop:good}  (2).
      By 
\eqref{eqsum2} and
\eqref {ddd}, we may choose $a_i=b_i^t$ and $\{b_1,...,b_n\}=\{\delta_\phi:\phi\in \cB_{A,\iota}\}$ for  any basis $\cB_{A,\iota}$ as in loc. cit..
Then by  \Cref
     {prop:good}  (2), only need  
     \begin{align}\label{final} (\delta_\phi^t\times \delta_\phi)_*
 \lb [\Delta]\cdot  \wh e\rb=0 \text{ for }\phi\in \cB_{A,\iota}.
 \end{align}

  Since it is enough to prove the theorem in the case $\oll$ is algebraically closed, we may assume this is the case from now on.
    If assumption (RM) holds, we can apply   \Cref{lem:norm} to choose $\cB_{A,\iota}$  such that $\delta_\phi^t= \delta_\phi$.  
  Then we can apply \Cref{prop:van1} (1) to conclude \eqref{final}.
If assumption (CM) holds, we can apply \Cref{prop:van1} (2) to conclude \eqref{final}. 
    \end{proof}

     \subsection{Shimura curves over a totally real number field}\label{pfthm1}
 We apply \Cref{thm:goodcycle} to Shimura curves.  
First, we recall basics on Shimura curves, following 
  \cite[Section 3]{YZZ}.   
  
  Let  $B$  be a quaternion algebra over a  totally real number field $F$. 
 Assume that   $B$ is  the matrix algebra at   one infinite place $\tau:F\incl \BR$ of $F$ and division at all other infinite places of $F$ as in \Cref{Shimura curves}.
 Let $B^+$ act on $\BH$ via \eqref{psl}.   
 
 Consider $G = B^\times$ as an algebraic group over $F$ whose center is $F^\times$.
  Let     $\BA_\rf$  be the ring of finite adeles of $F$.   
    For    an open compact-modulo-center subgroup $U$ of $G(\BA_\rf)$, we have the smooth   compactified   Shimura curve $ X_U$ for  $G$ of level $U$ over $F$.   The  complex uniformization  of  $X_U$ via $\tau: F\incl \BR\subset \BC$ is given by
\begin{equation}\label{compluni} X_{U,\BC} \cong B^+ \bsl \BH \times G(\BA_\rf)/U\coprod \{\text{cusps}\},\end{equation}
Here  
 the cusps exist if and only if $X_U $ is a modular curve, i.e. $F=\BQ$, and $B $ is the matrix algebra. In this case, the quotient is not proper
 and the cusps are the points at infinity for compactification.

   For $g\in G(\BA_\rf)$, we have  the right multiplication isomorphism $\rho(g) :X_{gUg^{-1}}\cong X_U$.  Indeed, under  the above  complex uniformization  of  $X_U$, $\rho(g)$ acts by right multiplication on $G(\BA_\rf)$. 
For example, if $g $  is in 
   the center ${\aft}$ of $G(\BA_\rf)$,
   then $\rho(g)$ is an isomorphism on $X_U$ and is the identity if $g\in F^\times(U\cap {\aft})$. Thus the finite abelian group 
   ${\aft}/F^\times(U\cap {\aft})$ acts on $X_U$ by $\rho$. 

   We also have      the natural projections $$\pi_1:X_{U\cap gUg^{-1}}\to X_U,\quad \pi_2:X_{U\cap gUg^{-1}}\to X_{gUg^{-1}}.$$
 Define the Hecke correspondence $Z(g)$ to be  the algebraic cycle on $X_U^2$ that is  the direct image of the fundamental   cycle   of $X_{U\cap gUg^{-1}}$ via $(\pi_1,\rho(g)\circ\pi_2)$. 
In particular, $Z(1)$ is the diagonal of $X_U^2$.  
     We will see that Hecke correspondence stabilizes the Hodge class that we now define (following \cite[3.1.3]{YZZ}).
For $x \in X_U(\BC)$ that is the image of a point in $\BH \times G(\BA_\rf)/U$ 
with a nontrivial stabilizer,  we call $x$ an elliptic point and let $r_x$ be  the order of the stabilizer. There are only finitely many elliptic points.
Let $\fc$ be the divisor of the cusps of $X_U$, which is 0 unless $X_U$ is a modular curve.
    Let $K_U$ be the canonical divisor of $X_U$.
Let
$$L_U = K_U + \sum_{\text{elliptic } x } \lb 1-\frac{1}{r_x}\rb x+\fc.$$ 
 Then $\deg L_U>0$
 and $L_U$
 is compatible under pullback and pushforward (up to degree) by  $\rho(g)$, as well as  the natural morphisms of Shimura curves as the level  $U$ changes.   
 Let $$e = [L_U]/\deg L_U^\circ$$ where   $L_U^\circ$ is the restriction of $L_U$ to 
 any geometrically connected component of $X_U$.
Then $e$ has degree 1 when restricted to each geometrically 
connected component of $X_U$. 
And $$[Z(g)]\in \Stab(X).$$

By \cite[Proposition 3.2.2]{QZ2} , we have 
     \begin{equation*}  \DC^1(X^2)  \subset \Omega\{[Z(g)], g\in G(\BA_\rf)\} .
     \end{equation*} 
     So  
      \begin{equation}\label{manyH} 
        \DC^1(X^2)  \subset \Stab(X) =\wStab(X).
             \end{equation} 
     To apply \Cref{thm:goodcycle}, we verify the assumptions (RM)  for $X_U$.
 
\begin{lem}[{\cite[Theorem 3.4, Theorem 3.8]{YZZ}}] \label{Areal}Let $A$  be a  simple isogeny factor of $ J_{X_U}$. Then $\End(A)_\BQ$  is
 a   field and
 $\dim A= [\End(A)_\BQ:\BQ]$.
Moreover, if $U$  containes the center ${\aft}$ of $G(\BA_\rf)$, then
$\End(A)_\BQ$ is a totally real number field.
\end{lem}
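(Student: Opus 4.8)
The plan is to reduce this to the theory of abelian varieties and Galois representations attached to Hilbert modular newforms, since it is the automorphic description of $J_{X_U}$ that makes the statement accessible; the lemma is quoted from \cite[Theorem 3.4, Theorem 3.8]{YZZ}, so here I only outline that route.

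First I would decompose $H^1(X_{U,\BC},\BQ)$ as a module over the Hecke algebra generated by the correspondences $Z(g)$, $g\in G(\BA_\rf)$. Because $B$ is a division algebra at every archimedean place other than $\tau$, the curve $X_U$ is proper and has no cusps, so this cohomology is entirely cuspidal, and the Eichler--Shimura isomorphism together with the Jacquet--Langlands correspondence identifies its irreducible constituents with the finite parts $\pi_\rf^U$ of those cuspidal automorphic representations $\pi$ of $G(\BA_F)=B^\times(\BA_F)$ whose archimedean component is the weight-$2$ discrete series at $\tau$ and the trivial representation at the other infinite places; the Jacquet--Langlands transfer $\pi'$ of such a $\pi$ to $\GL_2(\BA_F)$ is then a Hilbert cuspidal newform of parallel weight $2$. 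Grouping Galois-conjugate $\pi$'s, this gives an isogeny decomposition $J_{X_U}\sim\prod_{[\pi]}A_\pi^{m_\pi}$, where $A_\pi$ is the abelian variety attached to $\pi'$; it carries an action of the number field $E_\pi:=\BQ(\{a_\fq(\pi)\}_\fq)$ generated by the Hecke eigenvalues, and $H^1(A_\pi,\BQ)$ is free of rank $2$ over $E_\pi$, so that $\dim A_\pi=[E_\pi:\BQ]$. Since a simple isogeny factor $A$ of $J_{X_U}$ must be isogenous to $A_\pi$ for a unique class $[\pi]$, it suffices to prove the assertions for $A_\pi$.

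Next I would identify $\End(A_\pi)_\BQ$ with $E_\pi$ by passing to $\ell$-adic cohomology. By the Eichler--Shimura congruence relation, for almost all primes $\fq$ the characteristic polynomial of $\Frob_\fq$ on $V_\ell A_\pi$ is the product, over the embeddings $E_\pi\incl\ol\BQ_\ell$, of the degree-$2$ polynomials built from the Hecke eigenvalue $a_\fq(\pi)$; equivalently, $V_\ell A_\pi$ is free of rank $2$ over $E_\pi\otimes\BQ_\ell$ and $\Gal(\ol F/F)$ acts $E_\pi$-linearly on it. Since $\pi'$ is cuspidal this representation is absolutely irreducible, whence $\End(A_\pi)_\BQ\otimes\BQ_\ell=\End_{\Gal(\ol F/F)}(V_\ell A_\pi)=E_\pi\otimes\BQ_\ell$; therefore $\End(A_\pi)_\BQ=E_\pi$ is a field and $\dim A_\pi=\tfrac12\dim_\BQ H^1(A_\pi,\BQ)=[E_\pi:\BQ]$. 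For the total reality I would use the Weil pairing, a $\Gal(\ol F/F)$-equivariant alternating form on $V_\ell A_\pi$ valued in the cyclotomic character: its adjoint, restricted to $E_\pi=\End(A_\pi)_\BQ$, is the restriction of the Rosati involution, and it is the identity exactly when $E_\pi$ is totally real and complex conjugation exactly when $E_\pi$ is CM, the two Albert types being distinguished by whether the central character $\omega_\pi$ is trivial or not. When $U\supseteq\aft$, the character $\omega_\pi$ is trivial on $\aft$; it is also trivial at every archimedean place because $\pi'$ has parallel weight $2$; and since $\BA_F^\times=\aft\cdot F_\infty^\times$ this forces $\omega_\pi$ to be trivial, so $\pi\cong\pi^\vee$ and $E_\pi$ is totally real.

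The hard part is not this bookkeeping but the two automorphic inputs it rests on: the Eichler--Shimura and Jacquet--Langlands description of $H^1(X_U)$, and the absolute irreducibility of the $\ell$-adic Galois representations attached to Hilbert newforms. The latter is precisely what rules out a quaternionic endomorphism algebra and pins $\End(A_\pi)_\BQ$ down to the Hecke field exactly, rather than to a proper subfield of it. Both ingredients are assembled in \cite[Section 3]{YZZ}, which is why the lemma is simply quoted from there.
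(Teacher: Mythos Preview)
The paper does not give a proof of this lemma at all: it is stated with an attribution to \cite[Theorem 3.4, Theorem 3.8]{YZZ} and used as a black box, so there is nothing to compare your argument against on the paper's side. You already recognize this, and your outline is a faithful sketch of the YZZ argument (Eichler--Shimura/Jacquet--Langlands decomposition of $H^1$, the attached abelian variety $A_\pi$ with Hecke field $E_\pi$, identification $\End(A_\pi)_\BQ=E_\pi$ via absolute irreducibility of the $\ell$-adic Galois representation, and the central-character criterion for total reality). One small inaccuracy: you assert that $X_U$ ``is proper and has no cusps'', but the paper explicitly allows the modular-curve case $F=\BQ$, $B=M_2(\BQ)$, where there are cusps; the argument still goes through after passing to the cuspidal part of cohomology, so this does not affect the conclusion.
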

\begin{rmk} If $U$  containes the center ${\aft}$ of $G(\BA_\rf)$, 
  $X_U$ is a Shimura curve   for the reductive group $PB^\times$. 
\end{rmk}

Now by \Cref{thm:goodcycle} and the remark following it,
  we conclude that the following theorem holds.

  \begin{thm}\label{thm:2}
    Assume   $U$  containes the center ${\aft}$ of $G(\BA_\rf)$.
For  $X=X_U$ and $e$ as above, we have
 $$ (\delta\times \delta)_* \lb  \Ch^1(X^2)\cdot  \wh e\rb  =0. $$

 \end{thm}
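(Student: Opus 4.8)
The plan is to deduce the theorem from \Cref{thm:goodcycle}, applied to $X=X_U$ with the normalized Hodge class $e=[L_U]/\deg L_U^\circ$ introduced above. That theorem has exactly two hypotheses: the ``Stab'' condition $\DC^1(X^2)\subset\wStab(X^2)$, and the arithmetic condition that for every simple isogeny factor $A$ of $J_X$ the algebra $\End(A)_\BQ$ is a field satisfying either (RM) or (CM). I would verify each of these for $X_U$ in turn.

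For the Stab condition, I would first observe that every Hecke correspondence $Z(g)$ is $e$-stabilizing: the divisor $L_U$, and hence $e$, is compatible up to degree under the isomorphisms $\rho(g)$ and under the natural level-change projections $\pi_1,\pi_2$, and since $Z(g)$ is by definition the direct image of the fundamental cycle of $X_{U\cap gUg^{-1}}$ along $(\pi_1,\rho(g)\circ\pi_2)$, this compatibility is precisely the statement that $(\pi_1,\rho(g)\circ\pi_2)$ is $e$-stabilizing. Hence $[Z(g)]\in\Stab(X)$ for all $g\in G(\BA_\rf)$, and $\delta\in\wStab(X^2)$ since $\wStab(X^2)$ contains $\delta$ by definition (equivalently, since $Z(1)$ is the diagonal). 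Combining this with \cite[Proposition 3.2.2]{QZ2}, which gives $\DC^1(X^2)\subset\oll\{[Z(g)],\ g\in G(\BA_\rf)\}$, I obtain $\DC^1(X^2)\subset\Stab(X)=\wStab(X)$, which is exactly \eqref{manyH}; so the Stab condition holds.

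For the arithmetic condition, I would invoke \Cref{Areal}: because $U$ contains the center ${\aft}$ of $G(\BA_\rf)$, for every simple isogeny factor $A$ of $J_{X_U}$ the algebra $\End(A)_\BQ$ is a totally real number field with $\dim A=[\End(A)_\BQ:\BQ]$. In particular $\dim A\le[\End(A)_\BQ:\BQ]$, and the restriction of the Rosati involution to a totally real field is trivial by Albert's classification --- the analog of \Cref{rmk:RMCM} recorded in the remark following \Cref{thm:goodcycle}. Thus assumption (RM) holds for every such $A$, assumption (CM) is never needed, and \Cref{thm:goodcycle} gives $(\delta\times\delta)_*\lb\Ch^1(X^2)\cdot\wh e\rb=0$.

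The main obstacle is not in this deduction itself --- once \Cref{Areal} and \cite[Proposition 3.2.2]{QZ2} are granted, what remains is bookkeeping together with the elementary fact from Albert's classification --- but rather in those two inputs: \Cref{Areal} rests on the structure of the motive of a Shimura curve (via Jacquet--Langlands), and \cite[Proposition 3.2.2]{QZ2} on the statement that divisorial correspondences on $X_U^2$ are spanned by Hecke correspondences. The single point within the present argument that genuinely requires care is confirming that the normalized class $e$ used above (of degree $1$ on each geometrically connected component of $X_U$, built from the stability-corrected divisor $L_U$) is the one with respect to which $\wh e$, $\delta$, and the results feeding \Cref{thm:goodcycle} were defined.
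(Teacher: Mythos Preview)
Your proposal is correct and follows essentially the same route as the paper: verify \eqref{manyH} via the $e$-stabilizing property of Hecke correspondences together with \cite[Proposition 3.2.2]{QZ2}, invoke \Cref{Areal} (using that $U$ contains the center) to obtain that each simple isogeny factor $A$ has $\End(A)_\BQ$ totally real with $\dim A=[\End(A)_\BQ:\BQ]$, and then apply \Cref{thm:goodcycle} together with the Albert-classification remark following it. The paper presents exactly this deduction, with the theorem stated as an immediate consequence of \Cref{thm:goodcycle} once \eqref{manyH} and \Cref{Areal} are in place.
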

      \begin{rmk}\label{QZcusp}
 In \cite[Remark 3.2.4]{QZ2}, we further noticed that if $X_U$ is a modular curve, then  
$\fc$ is a multiple of $e$. So if
$$\cK_U = K_U + \sum_{\text{elliptic } x } \lb 1-\frac{1}{r_x}\rb x $$ 
which is an orbifold  canonical divisor of $X_U$, then $e = [\cK_U]/\deg \cK_U^\circ,$
provided that $\deg \cK_U>0$
      \end{rmk}

     \subsection{Shimura curves over $\BC$ and Hurwitz curves}\label{conncomp}
         The image of $\BH \times \{1\} $ in $X_U$ under \eqref{compluni}  is
     isomorphic to $B^+\cap U\bsl \BH$.
     With cusps in its complex analytic closure, it becomes  a 
     geometrically connected component  of $X_U$, and denoted by  $X_{B^+\cap U}$. 
       If  $U$  containes the center ${\aft}$ of $G(\BA_\rf)$,
       then  $  F^\times\subset B^+\cap U $ and  $B^+\cap U /F^\times$     is a congruence subgroup
     of $PB^+$ 
     in the sense of \Cref{Shimura curves}, and any congruence subgroup of $PB^+$ comes from this.
     If $O$ is an order of $B$,
     let $$\wh O = \prod_{v \text{ finite}}  O_v,$$ where $O_v$ be the closure
 of $O$ in $B_v$.
Then $ \lb B^+\cap \wh O^\times \aft \rb /F^\times$ is $\Gamma_O$ defined in \Cref{Shimura curves}.

     \begin{prop}[{\cite[(3.12.1) and p 82]{Shi}}]
   Assume   $F=\BQ(\zeta_7+\zeta_7^{-1})$ and the base change  of $B$ is  the matrix algebra at
at all finite places.
Let  $O$ be a maximal order  of $B$,
unique up to  
conjugation. Then 
$\Gamma_O $
is the  $(2,3,7)$-triangle group in $\PSL_2(\BR)$.
 
     \end{prop}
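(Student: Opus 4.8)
This proposition is quoted from Shimura's paper, so strictly speaking the ``proof'' is the reference \cite[(3.12.1) and p.~82]{Shi}; here is how one would reconstruct it. The first step is to pin down the arithmetic data. The field $F=\BQ(\zeta_7+\zeta_7^{-1})$ is the totally real cyclic cubic field of discriminant $49$, and the hypothesis that $B$ is split at all finite places, together with the standing assumption that $B$ is split at the distinguished archimedean place $\tau$ and is a division algebra at the other two archimedean places, determines the ramification set of $B$ completely: it is exactly the two non-distinguished real places. Since this set is finite of even cardinality, and a quaternion algebra over a number field is determined up to isomorphism by its set of ramified places, such a $B$ exists and is unique.

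The second step is to show that all maximal orders of $B$ are conjugate, so that $\Gamma_O$ is well defined. Because $B$ is split at $\tau$ it satisfies the Eichler condition, so the type number of maximal orders divides a ray class number of $F$ associated with the two ramified archimedean places. A short computation with the unit group of $F$ shows the narrow class number of $F$ is $1$: there is a unit of norm $-1$, e.g.\ $1+2\cos(2\pi/7)$, whose minimal polynomial is $x^3-2x^2-x+1$, and the cyclic Galois group of order $3$ permutes the three real places, so the units surject onto all sign patterns. Since the class number of $F$ is also $1$, the type number is $1$, and $\Gamma_O$ equals the image in $\PSL_2(\BR)$ of the group of norm-one units of $O$.

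The third step is to identify $\Gamma_O$ inside $\PSL_2(\BR)$. I would compute its covolume by the Shimizu--Borel volume formula for the norm-one units of a maximal order: with $[F:\BQ]=3$, $d_F=49$, no finite ramification, and using $\zeta_F(2)=\zeta(2)L(2,\chi)L(2,\bar\chi)$ for a cubic Dirichlet character $\chi$ of conductor $7$, the hyperbolic area of $\Gamma_O\bsl\BH$ comes out to $\pi/21$; the quotient is compact because $B$ is a division algebra. But $\pi/21=2\pi\bigl(2\cdot 0-2+(1-\tfrac12)+(1-\tfrac13)+(1-\tfrac17)\bigr)$ is precisely the minimal covolume of a lattice in $\PSL_2(\BR)$, attained up to conjugacy only by the $(2,3,7)$ triangle group, by Siegel's theorem; hence $\Gamma_O$ is conjugate to the $(2,3,7)$ triangle group. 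As an alternative to Siegel's theorem one can invoke Takeuchi's classification of arithmetic triangle groups, which exhibits the $(2,3,7)$ group as the one with invariant quaternion algebra the above $B$, together with a maximality argument.

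The main obstacle is the bookkeeping in the two middle steps: forcing the type number to be exactly $1$ (so that $\Gamma_O$ is unambiguous and ``as small as possible''), and tracking the normalization constants and local factors in the volume formula carefully enough that the area is exactly $\pi/21$ rather than a proper multiple or divisor of it. This explicit computation is what Shimura carries out, so in practice one simply cites \cite{Shi}.
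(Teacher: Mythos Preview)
The paper gives no proof of this proposition; it is stated purely as a citation to Shimura \cite[(3.12.1) and p.~82]{Shi}. Your proposal correctly identifies this and then supplies a standard reconstruction (pin down $B$ by its ramification set, use $h_F^+=1$ to get type number $1$, compute the covolume as $\pi/21$, and invoke Siegel's minimality/uniqueness or Takeuchi's classification). That reconstruction is sound.

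One small point of logic to tighten: type number $1$ by itself only says that $O$ is well defined up to conjugacy; it does not by itself identify $\Gamma_O$ (in the paper's sense, namely $(B^+\cap\wh O^\times\BA_\rf^\times)/F^\times$) with the image of $O^1$ in $\PSL_2(\BR)$. What actually forces that equality is the stronger fact $O_F^{\times,+}=(O_F^\times)^2$, which does follow from your narrow-class-number computation (both subgroups have index $2^3$ in $O_F^\times$, and squares are totally positive). With that in hand, any $b\in B^+\cap\wh O^\times\BA_\rf^\times$ has $\mathrm{nrd}(b)\in F_+^\times$ locally a unit times a square, hence a global square, so $b\in F^\times O^1$. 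Alternatively, once you know the image of $O^1$ already has covolume $\pi/21$, Siegel's minimum forces the a priori larger $\Gamma_O$ to coincide with it. Either way your conclusion stands; the paper itself simply defers to Shimura.
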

In the rest of this subsection, 
fix the set up in the proposition.
We show  that $X_{B^+\cap {V\aft }}  $'s give the infinite series of Hurwitz curves of Macbeath \cite{Mac} for prime level principal congruence subgroups $V$ of $\wh O^\times $. 
In particular, their congruence subgroups   are 
Hurwitz groups. 

   Let $B^1$ be the subgroup of $B^\times$ consisting of elements  of reduced norm 1.
  Then $B_\tau^1\cong \SL_2(\BR)$.
    For a compact  subgroup $V$ of $B^1(\aft)$, where $B^1$ is considered as an algeraic group over $F$,
      $B^1\cap V $ is a congruence subgroup of $B^1$. 
We will
 need the following  fact.
 \begin{lem}\label{eq:sub}
  For any subgroup $V$ of $G(\BA_\rf)$, we have
   \begin{equation*} B^+\cap {V\aft } \subset N_{B^+}(B^+\cap V)   \subset N_{B^+}(B^1\cap V).
\end{equation*}
 \end{lem}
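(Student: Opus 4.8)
The plan is to establish the two inclusions separately; each follows from a one-line conjugation computation, using only that $\aft$ is central in $G(\BA_\rf)$ and that the reduced norm is multiplicative (hence invariant under conjugation). Throughout I use the diagonal embedding $B^\times\incl G(\BA_\rf)$, so that for $x,w\in B^+$ the conjugate $xwx^{-1}$ formed in $B^\times$ has the same image in $G(\BA_\rf)$ as the conjugate formed there, and recall that $B^+$ is a subgroup of $B^\times$ (if $\tau(\nrd(x))>0$ then $\tau(\nrd(x^{-1}))>0$).

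For the first inclusion $B^+\cap V\aft\subset N_{B^+}(B^+\cap V)$, I would take $x\in B^+$ whose image in $G(\BA_\rf)$ is written $x=vz$ with $v\in V$ and $z\in\aft$, and let $w\in B^+\cap V$ be arbitrary. Centrality of $z$ gives $xwx^{-1}=vzwz^{-1}v^{-1}=vwv^{-1}$, which lies in $V$ since $v,w\in V$; on the other hand $xwx^{-1}\in B^+$ because $x,w\in B^+$ and $B^+$ is a group. Hence $xwx^{-1}\in B^+\cap V$, so $x$ normalizes $B^+\cap V$.

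For the second inclusion $N_{B^+}(B^+\cap V)\subset N_{B^+}(B^1\cap V)$, I would first observe that $B^1\subset B^+$ (as $1>0$ under $\tau$), so that $B^1\cap V$ is exactly the set of elements of $B^+\cap V$ of reduced norm $1$. Then for $x\in B^+$ normalizing $B^+\cap V$ and $w\in B^1\cap V\subset B^+\cap V$, conjugation yields $xwx^{-1}\in B^+\cap V$, while multiplicativity of $\nrd$ gives $\nrd(xwx^{-1})=\nrd(x)\nrd(w)\nrd(x)^{-1}=\nrd(w)=1$. Therefore $xwx^{-1}\in B^1\cap V$, so $x\in N_{B^+}(B^1\cap V)$.

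The argument is purely formal, so I do not anticipate a genuine obstacle; the only point needing a little care is the bookkeeping between $B^\times$, where $B^+$, $B^1$, and conjugation naturally live, and $G(\BA_\rf)$, where $V$ and $\aft$ live. Since the diagonal embedding is a group homomorphism and $z\in\aft$ is central, the two ways of computing $xwx^{-1}$ agree, and both inclusions follow verbatim.
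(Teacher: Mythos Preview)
Your proof is correct and follows essentially the same approach as the paper: the first inclusion is the observation that conjugation by an element of $V\aft$ preserves $V$ (using centrality of $\aft$) and $B^+$ is a group, while the second is that conjugation preserves the reduced norm. Your write-up is simply more explicit about the centrality step and the bookkeeping between $B^\times$ and $G(\BA_\rf)$; the paper compresses each inclusion into a single sentence.
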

\begin{proof}
For the first inclusion, let  $x \in B^+\cap V$ and $g\in B^+\cap {V\aft } $. Since $x\in V$ and $g\in V\aft$, 
$g^{-1}xg \in V$. Since $x\in B^+$ and $g\in B^+$, 
$g^{-1}xg \in B^+$. So $g\in N_{B^+}(B^+\cap V) $. 
The second inclusion follows from that conjugation keeps norm.
\end{proof}

 For a  finite place $v$  of $F$, let   $\fp_v$ be the maximal ideal of the ring of integers $R_v$ of $F_v$. 
 Let 
 $$
 \wh\Gamma_O(v) =  \{x\in \wh O^\times: x \equiv 1 \ \mod \fp_v\},
 $$
 $$
  \wh\Gamma_O^{\pm }(v) =  \{x\in \wh O^\times: x \equiv \pm 1\ \mod \fp_v\}
  $$
 and 
  $$
  \wh\Gamma_O^*(v) =  \{x\in \wh O^\times: x \in (R_v/\fp_v)^\times\ \mod \fp_v\}.
  $$
Let $O^1 = B^1\cap O$. 
By \cite[Lemma 4.3]{Dza}, we have  
$N_{B^+}(B^1\cap \wh\Gamma_O(v))=O^1F^\times $.
Then by \Cref{eq:sub},  
$B^+\cap {\wh\Gamma_O(v)\aft } \subset O^1F^\times $. 
Thus 
\begin{align*}
  &B^+\cap {\wh\Gamma_O(v)\aft } 
= O^1F^\times \cap {\wh\Gamma_O(v)\aft } 
= \lb O^1 \cap {\wh\Gamma_O(v)\aft }\rb F^\times  \\
= &\lb O^1 \cap \wh\Gamma_O^*(v)\rb F^\times
= \lb O^1 \cap \wh\Gamma_O^{\pm 1}(v)\rb F^\times
= \lb O^1 \cap \wh\Gamma_O(v)\rb F^\times
.
\end{align*}
Here on the second line, the first second equation is from the integrality imposed on  $\aft$ by intersection with $O^1$,
the second follows from that  square roots of 1 in $R_v/\fp_v$ are $\pm 1$. Other equations are by elementary group theory arguments. 
 By \cite[Corollary 4.2, Lemma 4.4]{Dza}, we have the following. 
\begin{prop}\label{Hurwitz}
 (1) For an open compact-modulo-center subgroup $U$ of $G(\aft)$ containing $\aft$ and contained  in $\wh\Gamma_O(v)\aft$ for some   finite place  $v$  of $F$, 
$X_{U}$ is a Hurwitz curve.

  (2) As $v$ varies over finite places of $F$,  $X_{ B^+\cap {\wh\Gamma_O(v)\aft }}$
gives the  infinite series of Hurwitz curves of Macbeath \cite{Mac}. 
Moreover, assume $v$ has residue characteristic $p$. Then its Hurwitz group is $\PSL_2(\BF_q)$ with 
\begin{itemize}
\item  $q=p$ if $p=7$ or $p\equiv \pm1 \ \mod 7$;
\item $q=p^3$ if $p\equiv \pm 2,3 \ \mod 7$.
\end{itemize}

\end{prop}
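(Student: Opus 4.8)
The plan is to translate the adelic statement into one about the classical $(2,3,7)$-triangle group and then appeal to \cite[Corollary 4.2, Lemma 4.4]{Dza}. By \eqref{compluni}, a geometrically connected component of $X_U$ is the compact Riemann surface $\Gamma_U\bsl\BH$ with $\Gamma_U=(B^+\cap U)/F^\times$; for $\aft\subseteq U\subseteq\wh O^\times\aft$ this $\Gamma_U$ is a congruence subgroup of $\Gamma_O=(B^+\cap\wh O^\times\aft)/F^\times$, which is the $(2,3,7)$-triangle group by \cite[(3.12.1) and p 82]{Shi}. Since a Hurwitz curve is precisely $\Gamma\bsl\BH$ for a torsion-free normal finite-index subgroup $\Gamma\subset\Gamma_O$, with automorphism group the Hurwitz group $\Gamma_O/\Gamma$, the task is to identify these groups $\Gamma_U$.

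First I would treat $U=\wh\Gamma_O(v)\aft$. The chain of equalities displayed just above the proposition gives $B^+\cap\wh\Gamma_O(v)\aft=(O^1\cap\wh\Gamma_O(v))F^\times$, so $\Gamma_U$ is the principal congruence subgroup $\Gamma_O(\fp_v)\subset\Gamma_O$ of level $\fp_v$, namely the image of $\{x\in O^1:x\equiv 1 \ \mod \fp_v\}$. Because $O$ is maximal and $B$ is split at $v$, one has $O_v\cong\RM_2(R_v)$, so reduction modulo $\fp_v$ is a homomorphism $O^1\to\SL_2(\BF_{q_v})$ with $q_v=|R_v/\fp_v|$; this is surjective by strong approximation for $B^1$ (which is noncompact at $\tau$), and its kernel on $\Gamma_O$ is exactly $\Gamma_O(\fp_v)$ since $\pm1$ are the only square roots of $1$ in $R_v/\fp_v$. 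Hence $\Gamma_O/\Gamma_O(\fp_v)\cong\PSL_2(\BF_{q_v})$. That $\Gamma_O(\fp_v)$ is moreover torsion-free — equivalently, that no element of $\Gamma_O$ of order $2,3$ or $7$ reduces to $\pm1$, which needs an argument at the residue characteristics dividing $42$ — and the corresponding statement for general $U$ in the stated range (where $\Gamma_U\subseteq\Gamma_O(\fp_v)$ is automatically torsion-free) is \cite[Corollary 4.2, Lemma 4.4]{Dza}; this proves (1). I expect this torsion-freeness input, together with the strong-approximation surjectivity, to be the only genuine difficulty; the rest is bookkeeping already carried out in the lines preceding the proposition.

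For (2), as $v$ runs over the finite places of $F$ the curves $\Gamma_O(\fp_v)\bsl\BH$ form a family of Hurwitz curves with Hurwitz groups $\PSL_2(\BF_{q_v})$, and $q_v$ is read off from the residue characteristic $p$ from the decomposition of $p$ in $F$: since $F=\BQ(\zeta_7+\zeta_7^{-1})$ is the maximal real subfield of $\BQ(\zeta_7)$, cyclic of degree $3$ over $\BQ$ with $\Gal(F/\BQ)\cong(\BZ/7)^\times/\{\pm1\}$, the prime $p$ splits completely when $p\equiv\pm1 \ \mod 7$ (so $q_v=p$), is inert when $p\equiv\pm2,\pm3 \ \mod 7$ (so $q_v=p^3$), and is totally ramified when $p=7$ (so $q_v=7$) — which is exactly the stated dichotomy. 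Finally, every $q$ for which $\PSL_2(\BF_q)$ is a Hurwitz group, namely $q=7$, $q=p$ with $p\equiv\pm1 \ \mod 7$, and $q=p^3$ with $p\not\equiv0,\pm1 \ \mod 7$ \cite{Mac}, occurs as some $q_v$; hence $\{X_{B^+\cap\wh\Gamma_O(v)\aft}\}_v$ is precisely Macbeath's infinite series, and the congruence subgroups of $\Gamma_O$ it exhibits are Hurwitz groups.
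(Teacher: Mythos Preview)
Your proposal is correct and follows essentially the same approach as the paper: the paper's entire proof consists of the chain of equalities $B^+\cap\wh\Gamma_O(v)\aft=(O^1\cap\wh\Gamma_O(v))F^\times$ established just before the proposition, followed by the single sentence ``By \cite[Corollary 4.2, Lemma 4.4]{Dza}, we have the following.'' You have correctly unpacked what these citations provide (torsion-freeness of the principal congruence subgroup and the identification of the quotient as $\PSL_2(\BF_{q_v})$) and filled in the computation of $q_v$ from the splitting of $p$ in $F=\BQ(\zeta_7+\zeta_7^{-1})$, which the paper leaves implicit.

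One small point to watch in part (1): for a general $U$ with $\aft\subseteq U\subseteq\wh\Gamma_O(v)\aft$, you correctly note that $\Gamma_U\subseteq\Gamma_O(\fp_v)$ is torsion-free, but your own criterion for $X_U$ to be a Hurwitz curve also requires $\Gamma_U$ to be \emph{normal} in $\Gamma_O$, which you do not address (and which need not hold for arbitrary such $U$). The paper is equally silent on this, so this is not a divergence from the paper's argument but rather a looseness present in the statement itself.
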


 

 \section{$\BQ$-subcanonical curves} \label{Subcanonical curves}

 Assume $k=\BC$. 
 Let $X$ be a smooth projective curve of genus $g\geq 2$ over $\BC$

\subsection{$\BQ$-subcanonical points and torsions} \label{Subcanonical
  curves general}

A point $P\in X$ is   subcanonical if   $(2g-2)p$ 
is a canonical divisor. Then subcanonical points are Weierstrass points. 
Conversely, not all Weierstrass points are necessarily subcanonical, but on hyperelliptic curves, they are. 
 A more sophiscated class of examples are from the Fermat curves. See \cite[Exercise 1, E-2, E-3, E-11]{ACGH}. 
The moduli space of  curves with a subcanonical points
was studied by
 Kontsevich and  Zorich \cite{KZ}, as 
 the minimal dimensional stratum of the moduli space of Abelian differentials.

More generally, a point $P\in X$ is   $\BQ$-subcanonical if   $[p]=[K]/\deg K$ in $\Ch^1(X)$, 
where $K$ is a canonical divisor of $X$. 
The moduli space of  curves with a $\BQ$-subcanonical point
can also be studied in terms of differentials, as initiated by Farkas and Pandharipande \cite{FkP}.

We also consider $\BQ$-subcanonicality from a more number-theoretic point of view.
Let $\xi\in \Pic^1_{X/\BC}$   such that $(2g-2)\xi$ is the class of $K$.
Let $i_\xi: X\incl \Pic^0_{X/\BC}$ be the associated embedding with base $b$.
Then $P\in X$ is $\BQ$-subcanonical if and only if $i_\xi(p)$ is a torsion point of $\Pic^0_{X}$.
By the uniform Manin--Mumford \cite[Theorem 2]{Kuh} (for base divisors of degree 1, instead of base points), the   orders of  torsion points of $\Pic^0_{X/\BC}$ contained $X$ is bounded by a constant depending only on $g$.
Thus, we have the following theorem.
\begin{thm}\label{thm:subcan}
  There is a constant depending only on $g\geq 2$ 
  such that the number of $\BQ$-subcanonical points on a smooth projective curve   of genus $g $ over $\BC$ is bounded by this constant.  
\end{thm}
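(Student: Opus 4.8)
The plan is to reinterpret $\BQ$-subcanonicality as a torsion condition under a suitably normalized Abel--Jacobi embedding and then to quote the uniform Manin--Mumford theorem of Kühne, exactly as flagged in the discussion preceding the statement.

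First I would fix a class $\xi\in\Pic^1_{X/\BC}$ with $(2g-2)\xi=[K]$. Such a $\xi$ exists because $\Pic^0_{X/\BC}$ is a divisible group: starting from any degree-one class $\eta$, the difference $[K]-(2g-2)\eta$ lies in $\Pic^0_{X/\BC}$ and hence equals $(2g-2)\zeta$ for some $\zeta\in\Pic^0_{X/\BC}$, so $\xi=\eta+\zeta$ works. The choice of $\xi$ is unique only up to $(2g-2)$-torsion, which is harmless in what follows. Let $i_\xi\colon X\incl\Pic^0_{X/\BC}$, $x\mapsto[x]-\xi$, be the associated embedding; it is a closed immersion since $g\geq 2$.

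Next I would record the elementary equivalence: for $P\in X$ one has $[P]=[K]/\deg K$ in $\Ch^1(X)=\Pic(X)_\BQ$ if and only if $(2g-2)\bigl([P]-\xi\bigr)$ is torsion in $\Pic(X)$, i.e.\ if and only if $[P]-\xi=i_\xi(P)$ is a torsion point of $\Pic^0_{X/\BC}$ (here one uses that the torsion subgroup is preserved and reflected by multiplication by $2g-2$, together with $(2g-2)\xi=[K]$). Hence, via the injection $i_\xi$, the set of $\BQ$-subcanonical points of $X$ is identified with the set of torsion points of $\Pic^0_{X/\BC}$ lying on $i_\xi(X)$.

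Finally I would invoke \cite[Theorem 2]{Kuh}, applied to the Abel--Jacobi embedding $i_\xi$ attached to the degree-one class $\xi$: it provides a constant $N_0=N_0(g)$ bounding the order of every torsion point of $\Pic^0_{X/\BC}$ lying on $i_\xi(X)$. All such points therefore lie in $\Pic^0_{X/\BC}[N_0!]$, a group of order $(N_0!)^{2g}$, so the number of $\BQ$-subcanonical points on $X$ is at most $(N_0!)^{2g}$, a constant depending only on $g$. The entire content is the cited uniform Manin--Mumford theorem; the only point I would check carefully --- routine, but worth stating --- is that \cite[Theorem 2]{Kuh} does apply to embeddings given by a base divisor of degree $1$ (rather than by a base point) with a bound depending on $g$ alone, which is precisely the version recorded in the excerpt.
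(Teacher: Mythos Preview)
Your proposal is correct and follows exactly the same route as the paper: reinterpret $\BQ$-subcanonical points as torsion points under the embedding $i_\xi$ attached to a degree-one class $\xi$ with $(2g-2)\xi=[K]$, and then appeal to the uniform Manin--Mumford theorem \cite[Theorem 2]{Kuh}. You simply make explicit a couple of routine steps (existence of $\xi$, the passage from bounded torsion order to bounded cardinality) that the paper leaves to the reader.
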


With this torsion point interpretation of $\BQ$-subcanonical points, we can give more examples of $\BQ$-subcanonical points.
 Coleman \cite[Section VI]{Col} defined two points $P,Q\in X$ to be ``in the same torsion packet" if $P-Q\in \Pic^0_{X/\BC}$ is torsion. And he proved that if 
 $f:X\to \BP^1$ is an abelian branched covering with $B\subset \BP^1$ the branched locus, 
 Then $f^{-1}(B)$ form a  torsion packet. See also \cite{Col1}.
 Since we can take a canonical divisor of $X$   supported on $f^{-1}(B)$, we have the following proposition.
\begin{prop}\label{Colprop}
Let $f:X\to \BP^1$ be an abelian branched covering with $B\subset \BP^1$ the branched locus. 
Then the preimage of any point in $B$ is a $\BQ$-subcanonical point of $X$.
\end{prop}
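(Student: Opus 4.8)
The plan is to deduce the statement from Coleman's torsion-packet theorem quoted just above, together with the elementary observation that a branched cover of $\BP^1$ admits a canonical divisor supported on the preimage of its branch locus.

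First I would record the reformulation of $\BQ$-subcanonicality established at the start of this subsection: a point $P\in X$ is $\BQ$-subcanonical if and only if $(2g-2)P-K$ represents a torsion class in $\Pic^0_{X/\BC}$ for some (equivalently, any) canonical divisor $K$. Indeed this is equivalent to $[P]=[K]/\deg K$ in $\Ch^1(X)=\Pic(X)_\BQ$, because torsion classes vanish rationally, and also to $i_\xi(P)$ being a torsion point.

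Next, fix $b\in B$ and a point $P$ of $X$ lying over $b$; we must show $P$ is $\BQ$-subcanonical. By the Riemann--Hurwitz formula, $K_X$ is linearly equivalent to $f^{\ast}K_{\BP^1}+R$, where $R=\sum_Q (e_Q-1)Q$ is the ramification divisor. Every point $Q$ in the support of $R$ is a ramification point, hence maps into the branch locus $B$; and taking $K_{\BP^1}=-2[b]$ makes $f^{\ast}K_{\BP^1}$ supported on the fibre $f^{-1}(b)$. Thus $\supp(R)\cup f^{-1}(b)\subset f^{-1}(B)$, so $X$ has a canonical divisor $K=\sum_i m_i Q_i$ with every $Q_i\in f^{-1}(B)$, and of course $\deg K=2g-2$.

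Finally I would invoke Coleman's theorem: since $f$ is an abelian branched covering, $f^{-1}(B)$ is a single torsion packet, so $P-Q_i$ is a torsion class in $\Pic^0_{X/\BC}$ for every $i$. Hence
\[
(2g-2)\,P-K=\sum_i m_i\,(P-Q_i)
\]
is torsion in $\Pic^0_{X/\BC}$, and by the reformulation above $P$ is $\BQ$-subcanonical, as desired. The only non-formal ingredient here is Coleman's torsion-packet theorem, which is cited; granting it, constructing a canonical divisor supported on $f^{-1}(B)$ via Hurwitz and deriving the displayed identity are routine, so there is no genuine obstacle internal to this argument.
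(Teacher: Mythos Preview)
Your proof is correct and follows exactly the approach indicated in the paper: invoke Coleman's theorem that $f^{-1}(B)$ is a single torsion packet, construct a canonical divisor supported on $f^{-1}(B)$, and conclude. The paper states this in one sentence (``Since we can take a canonical divisor of $X$ supported on $f^{-1}(B)$''), while you have helpfully spelled out the Riemann--Hurwitz computation and the identity $(2g-2)P-K=\sum_i m_i(P-Q_i)$.
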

Again, Fermat curves, which we have seen to be subcanonical,  are examples to which  this proposition applies. The degree $n$ 
Fermat curve is a $\BZ/n\BZ\times \BZ/n\BZ$ cover of $\BP^1$.
Another example is Klein's quartic. See \Cref{Klein}.

Define $X$ to be $\BQ$-subcanonical if there is a $\BQ$-subcanonical point $P\in X$.
For a $\BQ$-subcanonical  curve,  the Faber--Pandharipande   0-cycle
$$Z_K:=K\times K-(2g-2)K_\Delta$$ is 0 in $\Ch^1(X^2)$. 
Then by  Green--Griffiths \cite{GG}, a generic curve of genus $g> 2$ 
 is not $\BQ$-subcanonical.

We expect  that most of our above examples of  curves are \textit{not }$\BQ$-subcanonical.  
In the remaining two subsections, we only formalize our expectations in
explicit conjectures for Shimura curves (as well as Hurwitz curves) and unramified cyclic coverings of hyperelliptic curves. We also give
give some evidence. 
The case of curves with $G$-multiplication
is more complicated due to the generality and the complicatedness of the corresponding moduli.  \Cref{Colprop} requires that a necessary condition for a 
 $G$-cover to avoid being $\BQ$-subcanonical
 is that it
  can not be realized as an abelian cover of $\BP^1$.   We ask if this is also a sufficient condition.

  \subsection{Shimura curves} \label{MD}

   We consider general congruence  subgroups  of $\PSL_2(\BR)$. 
A subgroup $\Gamma$ of $\PSL_2(\BR)$ is a congruence  subgroup if it is the image of 
a congruence  subgroup of some $B^1$ as defined in \Cref{conncomp}.
 Define an orbifold  canonical divisor on $X_\Gamma$ as in \Cref{QZcusp}, and define 
 orbifold $\BQ$-subcanonicality of $X_\Gamma$ accordingly. In particular, if    $\Gamma$  a \textit{torsion-free} congruence  subgroup, this is the same with $\BQ$-subcanonicality.

 If   $X_\Gamma$ is a modular curve, i.e., when $B$ is the matrix algebra over $\BQ$, 
 then $X_\Gamma=\Gamma\bsl \BH \coprod \{\text{cusps}\}$. 
By  the   Manin--Drinfeld theorem, the cusps on $X_\Gamma$ are rationally equivalently to each other  up to torsion, or  ``in the same torsion packet" in Coleman's terminology. 
Similar to 
\Cref{QZcusp}, one can then show the following.
\begin{lem}\label{cuspsub}
If  $X_\Gamma$ is a modular curve, then it
 is orbifold $\BQ$-subcanonical, and its orbifold $\BQ$-subcanonical points are the ones rationally equivalently to  the cusps up to torsion, or ``the cuspidal torsion packet" in Coleman's terminology. 
\end{lem}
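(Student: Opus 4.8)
The plan is to show that every cusp of $X_\Gamma$ has class $e:=[\cK_\Gamma]/\deg\cK_\Gamma$ in $\Ch^1(X_\Gamma)$, where $\cK_\Gamma$ is the orbifold canonical divisor of \Cref{QZcusp}; since $g\geq 2$ we have $\deg\cK_\Gamma\geq 2g-2>0$, so $e$ is well defined, and by definition a point $P\in X_\Gamma$ is orbifold $\BQ$-subcanonical precisely when $[P]=e$ in $\Ch^1(X_\Gamma)$. A modular curve has at least one cusp, so once we know each cusp is orbifold $\BQ$-subcanonical it follows that $X_\Gamma$ itself is orbifold $\BQ$-subcanonical.

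First I would record that, by the analogue of \Cref{QZcusp} for $X_\Gamma$ (the geometrically connected component over $\BC$ of an adelic Shimura curve with $B$ the matrix algebra over $\BQ$), the divisor of cusps $\fc$ is a positive rational multiple of $e$ in $\Ch^1(X_\Gamma)$; comparing degrees gives $[\fc]=(\deg\fc)\,e$. Next I would invoke the Manin--Drinfeld theorem, recalled above, according to which any two cusps $c,c'$ of $X_\Gamma$ differ by a torsion class in $\Pic^0(X_\Gamma)$. Since $\Ch^1(X_\Gamma)=\Pic(X_\Gamma)_\BQ$ and $\Pic^0(X_\Gamma)$ is divisible, its torsion vanishes after tensoring with $\BQ$, so all cusps have one common class in $\Ch^1(X_\Gamma)$. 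Writing $\fc=\sum_i c_i$, this common class $[c_1]$ satisfies $(\deg\fc)\,[c_1]=[\fc]=(\deg\fc)\,e$, hence $[c_1]=e$, and likewise for every cusp. This proves the first assertion.

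For the characterization of the orbifold $\BQ$-subcanonical locus, observe that $P\in X_\Gamma$ satisfies $[P]=e$ in $\Ch^1(X_\Gamma)$ if and only if $[P]=[c_1]$ there; as both have degree $1$, this holds if and only if $[P-c_1]=0$ in $\Pic^0(X_\Gamma)_\BQ$, i.e.\ if and only if $P-c_1$ is torsion in $\Pic^0(X_\Gamma)$ --- which is precisely the statement that $P$ lies in the same torsion packet, in Coleman's sense, as the cusp $c_1$, equivalently as all cusps. This identifies the orbifold $\BQ$-subcanonical points of $X_\Gamma$ with the cuspidal torsion packet. The only delicate point is the orbifold normalisation and staying within the range of validity of \Cref{QZcusp}, which is guaranteed by the standing hypothesis $g\geq 2$; the essential input, rational equivalence of cuspidal divisors up to torsion, is nothing but the Manin--Drinfeld theorem.
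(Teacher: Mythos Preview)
Your proposal is correct and follows exactly the approach the paper indicates: the paper does not give a standalone proof but merely says ``Similar to \Cref{QZcusp}, one can then show the following,'' after invoking Manin--Drinfeld; your argument fills in precisely these details by combining the conclusion of \Cref{QZcusp} (that $[\fc]$ is a rational multiple of $e=[\cK_\Gamma]/\deg\cK_\Gamma$) with Manin--Drinfeld (all cusps share one class in $\Ch^1$) to deduce that each cusp has class $e$, and then unwinding the definition of the cuspidal torsion packet.
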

\begin{cor}\label{cuspsubcor}
If  $X_\Gamma,X_{\Gamma'}$ are  modular curves with $\Gamma\subset \Gamma'$. Then  the natural map $X_\Gamma\to X_{\Gamma'}$ sends
  orbifold $\BQ$-subcanonical points to   orbifold $\BQ$-subcanonical points.
\end{cor}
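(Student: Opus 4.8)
The plan is to reduce everything to Lemma~\ref{cuspsub}, which identifies the orbifold $\BQ$-subcanonical points of a modular curve with the points of its cuspidal torsion packet, i.e.\ the points $P$ for which $[P]$ is rationally equivalent to the class of a cusp up to torsion. So I would first observe that the natural projection $\pi\colon X_\Gamma\to X_{\Gamma'}$ sends cusps to cusps: on the non-cuspidal loci it is the quotient map $\Gamma\bsl\BH\to\Gamma'\bsl\BH$ coming from $\Gamma\subset\Gamma'$, so its extension to the compactifications carries the complement $X_\Gamma\setminus(\Gamma\bsl\BH)$ into the complement $X_{\Gamma'}\setminus(\Gamma'\bsl\BH)$.

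Next I would push the torsion condition forward along $\pi$. Writing $J_\Gamma=\Pic^0_{X_\Gamma/\BC}$ and $J_{\Gamma'}=\Pic^0_{X_{\Gamma'}/\BC}$, pushforward of divisors defines a homomorphism of abelian varieties $\Nm_\pi\colon J_\Gamma\to J_{\Gamma'}$, and over the algebraically closed field $\BC$ one has $\pi_*[Q]=[\pi(Q)]$ for any closed point $Q$, with no multiplicity. Fix a cusp $c_0$ of $X_\Gamma$. If $P$ is orbifold $\BQ$-subcanonical on $X_\Gamma$, Lemma~\ref{cuspsub} gives that $[P]-[c_0]$ is torsion in $J_\Gamma$; applying $\Nm_\pi$ and using that a homomorphism of abelian varieties preserves torsion, $[\pi(P)]-[\pi(c_0)]$ is torsion in $J_{\Gamma'}$. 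By the previous paragraph $\pi(c_0)$ is a cusp of $X_{\Gamma'}$, so $\pi(P)$ lies in the cuspidal torsion packet of $X_{\Gamma'}$, hence is orbifold $\BQ$-subcanonical by Lemma~\ref{cuspsub} applied to $X_{\Gamma'}$. This is exactly the assertion.

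Since Lemma~\ref{cuspsub} does the real work, the only delicate points — and the ones I would be most careful about — are the two standard facts just used: that $\pi$ genuinely maps cusps to cusps rather than sending some cusp of $X_\Gamma$ into the open modular locus of $X_{\Gamma'}$ (which can also be seen from the orbit description of cusps, or by comparing degrees of $\Gamma\bsl\BH\to\Gamma'\bsl\BH$ and $X_\Gamma\to X_{\Gamma'}$), and that $\pi_*$ on $0$-cycles introduces no multiplicity over $\BC$. As an alternative that sidesteps Lemma~\ref{cuspsub} altogether, I would note that the compatibility of the orbifold canonical class under pushforward up to degree (recorded in \Cref{pfthm1} and \Cref{QZcusp}) yields $\pi_*e_{X_\Gamma}=e_{X_{\Gamma'}}$ in $\Ch^1$, so that the defining condition $[P]=e_{X_\Gamma}$ of orbifold $\BQ$-subcanonicality pushes forward directly to $[\pi(P)]=\pi_*[P]=e_{X_{\Gamma'}}$.
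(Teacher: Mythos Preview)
Your argument is correct and is precisely the intended derivation: the paper states this as a corollary of Lemma~\ref{cuspsub} with no further proof, and your first approach---push forward the torsion relation $[P]-[c_0]$ along $\pi$, using that $\pi$ carries cusps to cusps and that $\Nm_\pi$ preserves torsion---is exactly how one unpacks that. Your alternative via the pushforward compatibility $\pi_*e_{X_\Gamma}=e_{X_{\Gamma'}}$ is also valid and has the mild advantage of applying verbatim to arbitrary maps of Shimura curves, not only modular ones.
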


In other cases, Shimura curves have no cusps. Then, motivated by the fact that there are only finitely many hyperelliptic Shimura curves \cite{Qiufin}, we propose the following.

\begin{conj}\label{conj1}There are only finitely many orbifold $\BQ$-subcanonical Shimura curves, up to isomorohism,  over $\BC$ that are not modular curves.
\end{conj}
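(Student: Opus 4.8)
\emph{Overall strategy.} The plan is to deduce the statement from the classical finiteness of arithmetic Fuchsian groups of bounded covolume (Borel), for which it suffices to bound the hyperbolic volume $\vol(\Gamma\bsl\BH)$ (equivalently the orbifold Euler characteristic) of a non-modular Shimura curve $X_\Gamma$ which is orbifold $\BQ$-subcanonical. So let $P\in X_\Gamma$ be orbifold $\BQ$-subcanonical; since $X_\Gamma$ is (geometrically) connected this means $[P]=e$ in $\Ch^1(X_\Gamma)_\BQ$, where $e$ is the Hodge class of \Cref{pfthm1} (an orbifold canonical class divided by its degree). The two features of $X_\Gamma$ I would use are exactly the two used in this paper: (i) every Hecke correspondence $Z(g)$ stabilizes $e$ (\Cref{pfthm1}); and (ii) $X_\Gamma$ has large gonality.

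\emph{The degeneration forced by subcanonicality.} For a prime $\fp$ of $F$ coprime to the level of $U$ and to $\disc B$, let $T_\fp=Z(g_\fp)$ be the associated Hecke correspondence, of degree $d_\fp=N\fp+1$. Because $T_\fp$ stabilizes $e$ and $[P]=e$, the effective divisor $T_\fp(P)$, of degree $d_\fp$, satisfies $T_\fp(P)\sim d_\fp P$. Let $a_1(P)$ be the first Weierstrass non-gap at $P$, i.e.\ the least $n$ with $h^0(nP)\geq 2$; for $n<a_1(P)$ one has $h^0(nP)=1$, so $nP$ is the unique effective divisor in its class. Hence for every admissible $\fp$ with $d_\fp<a_1(P)$ one is forced to have $T_\fp(P)=d_\fp P$: the Hecke correspondence is \emph{totally degenerate} at $P$. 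Since $|a_1(P)P|$ is a $g^1_{a_1(P)}$, one also has $a_1(P)\geq\mathrm{gon}(X_\Gamma)$.

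\emph{Ruling out total degeneracy.} The equality $T_\fp(P)=d_\fp P$ says that all $d_\fp$ of the $\fp$-isogenies emanating from the abelian variety $A_P$ attached to $P$ return to a point isomorphic to $(A_P,\text{level/QM structure})$; counting with the multiplicities coming from ramification of $Z(g_\fp)\to X_\Gamma$ over elliptic points, this forces $d_\fp\leq c_\fp(P)$, the number of such self-$\fp$-isogenies with multiplicity. One expects $c_\fp(P)$ to be bounded \emph{uniformly in $\fp$ and in the level}: the number of $\fp$-isogeny targets of $A_P$ abstractly isomorphic to $A_P$ is at most the number of principal prime divisors of $\fp$ in the relevant CM/endomorphism algebra, hence $\leq 2$; and the ramification index of $Z(g_\fp)\to X_\Gamma$ at $P$ divides the stabilizer order $r_P$, which is bounded by the largest elliptic order of $X_\Gamma$, itself $O\!\big([F:\BQ]\log\log[F:\BQ]\big)$ since torsion elements of $PB^\times$ generate quadratic extensions of $F$. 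Thus $c_\fp(P)=O\!\big([F:\BQ]\log\log[F:\BQ]\big)$. Now combine the Li--Yau/Abramovich lower bound for the gonality of arithmetic hyperbolic curves with the uniform spectral gap $\lambda_1\geq\lambda_0>0$ for congruence covers---which transfers to Shimura curves through the Jacquet--Langlands correspondence and known Ramanujan-type bounds for $\GL_2$---to obtain $\mathrm{gon}(X_\Gamma)\gg\vol(\Gamma\bsl\BH)$, and note $[F:\BQ]\ll\log\vol(\Gamma\bsl\BH)$ by Odlyzko's discriminant bounds and the mass/volume formula. Once $\vol(\Gamma\bsl\BH)$ is large we therefore have $a_1(P)\geq\mathrm{gon}(X_\Gamma)\gg\vol\gg[F:\BQ]\gg c_\fp(P)$, and by the prime ideal theorem for $F$ there is an admissible prime $\fp$ with $c_\fp(P)<d_\fp<a_1(P)$. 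For this $\fp$ we have simultaneously $T_\fp(P)=d_\fp P$ and $d_\fp>c_\fp(P)$, a contradiction. Hence $\vol(\Gamma\bsl\BH)$ is bounded, and Borel's theorem finishes the proof; the torsion-free and modular cases are handled as in \Cref{QZcusp}.

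\emph{Main obstacle.} The crux is the uniform bound on $c_\fp(P)$: a quantitative ``no total degeneracy of Hecke correspondences'' statement valid for all $F$, all $B$, all levels and all $P$---in effect, uniform control of the number of self-isogenies/optimal embeddings and of the ramification of $Z(g_\fp)$ at elliptic points. The CM case, where $(P,P)$ lies on infinitely many $Z(g_\fp)$, is the one to watch, and I expect this uniformity (rather than the gonality or the Borel inputs, which are standard) to be the genuinely delicate point. The soft alternative---the uniform Manin--Mumford theorem of K\"uhne, which bounds the order of the torsion point $i_\xi(P)$ by a function of $g$---only bounds the \emph{number} of $\BQ$-subcanonical points on a fixed curve and so does not by itself control the family of curves on which such points can occur.
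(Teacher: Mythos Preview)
The statement you are trying to prove is labelled \emph{Conjecture} in the paper and is not proved there; the paper offers no argument beyond the heuristic analogy with the finiteness of hyperelliptic Shimura curves. So there is no ``paper's own proof'' to compare against, and your proposal should be read as a strategy toward an open problem rather than as a reconstruction of an existing argument.

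That said, your outline is a sensible line of attack and you have correctly isolated the crux. The step $T_\fp(P)=d_\fp P$ for $d_\fp<a_1(P)$ is fine, and chaining gonality $\gg\vol$ via Li--Yau/Abramovich plus the congruence spectral gap (transported through Jacquet--Langlands) with Borel's finiteness is the natural endgame. The genuine gap is exactly where you flag it: a uniform bound on $c_\fp(P)$. Two concrete difficulties deserve emphasis. First, for a general quaternionic Shimura curve over a totally real field $F\neq\BQ$ there is no canonical PEL interpretation, so speaking of ``the abelian variety $A_P$ attached to $P$'' and counting ``self-$\fp$-isogenies'' requires an auxiliary construction (e.g.\ passage to an auxiliary PEL datum as in Carayol or Deligne's trick); the bookkeeping of level structures under this passage is nontrivial and can inflate the count. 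Second, and more seriously, at a CM point $P$ with CM order $\cO$ the condition $T_\fp(P)=d_\fp P$ translates into a statement about primes of $\cO$ above $\fp$ being principal and compatible with the level; when $\fp$ splits in the CM field this can occur, and ruling out that it occurs for \emph{all} admissible $\fp$ in a long initial segment is essentially an effective Chebotarev/class-group statement uniform over CM orders and levels. Your heuristic ``$\leq 2$ self-isogenies times a bounded ramification index'' is too optimistic as stated: the level structure can collapse several Hecke preimages onto $P$, and controlling this uniformly in the level is precisely what is not known. In short, your proposal is a plausible blueprint, but the uniform non-degeneracy of Hecke correspondences that it hinges on is itself at the level of the conjecture.
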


 We want to propose a variant of \Cref{conj1} for modular curves.

 \begin{conj}\label{conj2} 
  
There are only finitely many modular curves, up to isomorohism,  over $\BC$ that  
have orbifold  $\BQ$-subcanonical points outside cusps.
\end{conj}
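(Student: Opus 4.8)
The plan is to recast the conjecture as a statement about cuspidal torsion packets and then to reduce it to a uniform form of the Coleman--Kaskel--Ribet conjecture. By \Cref{cuspsub}, the orbifold $\BQ$-subcanonical points of a modular curve $X_\Gamma$ are exactly the points of its cuspidal torsion packet, i.e.\ the $P$ with $[P]-[\text{cusp}]$ torsion in $\Pic^0(X_\Gamma)$; the cusps always lie in this packet by Manin--Drinfeld. So the conjecture asserts that for all but finitely many $\Gamma$ the cuspidal torsion packet consists of nothing but the cusps.

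First I would reduce to large genus. There are only finitely many modular curves of any given bounded genus (for congruence subgroups, bounded genus forces bounded index), so it suffices to show that \emph{every} modular curve of sufficiently large genus has cuspidal torsion packet equal to its set of cusps. The monotonicity in \Cref{cuspsubcor} --- a covering $X_\Gamma\to X_{\Gamma'}$ sends orbifold $\BQ$-subcanonical points to orbifold $\BQ$-subcanonical points and non-cusps to non-cusps, the latter being the points of finite $j$-invariant --- then further lets one restrict to the standard towers $X(N),\,X_1(N),\,X_0(N)$ for $N$ large, treating the finitely many intermediate groups for the remaining small $N$ by hand.

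The arithmetic heart of this, and the \emph{main obstacle}, is precisely a uniform Coleman--Kaskel--Ribet phenomenon. For $X_0(p)$ with $p$ prime the Coleman--Kaskel--Ribet conjecture --- known for many $p$ by Coleman--Kaskel--Ribet and by Baker --- identifies the cuspidal torsion packet with the two cusps together with the hyperelliptic branch points when $X_0(p)$ is hyperelliptic; since $X_0(p)$ is hyperelliptic for only finitely many $p$, the conjecture follows along this family, and this is the evidence I would record. For arbitrary level one wants, say, a bound on the size of the cuspidal torsion packet of a level-$N$ modular curve that grows strictly more slowly in the index than the genus does; combined with K\"uhne's uniform Manin--Mumford theorem (\Cref{thm:subcan}) this would finish the job. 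I would pursue it with the tools behind the known CKR cases: the Galois action on the torsion of the modular Jacobian, which is large away from small primes via the theory of the Eisenstein ideal; the finiteness and explicit structure of the cuspidal divisor class group; and the abundance of Hecke correspondences stabilizing the Hodge class (cf.\ \eqref{manyH}), much as they are used in the proof of \Cref{thm:2}.

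If a uniform bound resists, an alternative is descent: an orbifold $\BQ$-subcanonical point outside the cusps on $X_\Gamma$ is algebraic --- the Hodge class is defined over a number field --- of degree controlled by the gonality, hence of bounded torsion order by \Cref{thm:subcan}; feeding this into Coleman's description of the torsion packet of an abelian branched cover of $\BP^1$ (\Cref{Colprop}) together with a congruence argument might eliminate all but finitely many levels. In every approach the genuine difficulty is the one already present in the Coleman--Kaskel--Ribet problem: there is no known mechanism forbidding an unexpected torsion point on a modular curve of large level, so beyond the $X_0(p)$ evidence the conjecture remains genuinely open.
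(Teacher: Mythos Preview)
The statement is a \emph{conjecture}; the paper does not prove it, and neither do you --- you correctly close by saying it remains open. The paper's contribution is the partial evidence \Cref{thmtorsion} for the family $X_0(N)$, and your identification of the Coleman--Kaskel--Ribet phenomenon as the core input is exactly the paper's viewpoint. Your reformulation via \Cref{cuspsub} (orbifold $\BQ$-subcanonical points $=$ cuspidal torsion packet) is also correct and is how the paper frames things.

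Your reduction ``restrict to the standard towers $X(N),X_1(N),X_0(N)$'' has a genuine gap. Monotonicity (\Cref{cuspsubcor}) sends non-cuspidal subcanonical points from a cover \emph{down} to its base, so proving the claim for the standard towers only settles those $X_\Gamma$ that \emph{dominate} one of them at large level; but many congruence subgroups of arbitrarily large genus are not contained in any conjugate of $\Gamma_0(M),\Gamma_1(M),\Gamma(M)$ with $M>1$ --- the non-split Cartan curves $X_{\mathrm{ns}}(p)$ are already a counterexample. The paper uses monotonicity only in the correct direction, within the $X_0$-tower: since $X_0(N)\to X_0(M)$ for $M\mid N$, a bad $X_0(N)$ forces bad $X_0(M)$ for every divisor.

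On the evidence side, the paper goes further than your $X_0(p)$ remark. Beyond Baker--Tamagawa for prime level, it adds a short CM-point argument (comparing the two degeneracy maps $X_0(p^2)\to X_0(p)$) to rule out $N$ divisible by two primes in $P_2=\{23,29,31,41,47,59,71\}$ or by the square of one of them, concluding that any bad $X_0(N)$ has $N=Mq$ with $M$ supported on the primes $<23$ and $q\in P_2\cup\{1\}$. Via monotonicity this reduces the $X_0$-tower case of the conjecture to the eight $p$-adic towers $X_0(p^n)$, $p\in\{2,3,5,7,11,13,17,19\}$, for which the paper points to perfectoid methods as a possible line of attack. Your proposed tools (Eisenstein ideal, uniform Manin--Mumford, Hecke correspondences) are all reasonable avenues but are not what the paper actually does.
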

We give a piece of   evidence to \Cref{conj2}.
  The  congruence subgroup 
\begin{align*} 
 \Gamma_0(N)&= \bigg\{
 \begin{bmatrix}
 a & b \\
 c & d
 \end{bmatrix}\in\SL_2(\BZ): c\equiv 0 \mod N\bigg\}.
 \end{align*}
 in $\SL_2(\BZ)$ defines the
   classical modular curve $X_0(N)$. 
   
  Let   $P_1 $ be the set 
of primes less than $23$, and $P_2= \{23, 29, 31, 41, 47, 59, 71\}.$
  If $p\geq 23,p\not\in P_2$ is a prime number,  the
 Coleman--Kaskel--Ribet conjecture \cite{CKR} (under \Cref{cuspsub})
 claims that $X_0(p)$ has no orbifold $\BQ$-subcanonical points  except cusps.
It was proved by Baker and Tamagawa \cite{Bak}.  As a corollary  (see Proposition 4.1 in loc. cit), for such a $p$ and a   positive integer $M$, 
$X_0(pM)$ has no orbifold $\BQ$-subcanonical points  except cusps.
\begin{thm}\label{thmtorsion}
  If $X_0(N)$ has   orbifold  $\BQ$-subcanonical points  except cusps, then $N=Mq$ 
    where $M$ is a product of primes in $P_1$ and $q\in P_2$ or    $q=1$.
 
\end{thm}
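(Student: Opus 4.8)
The plan is to argue by contraposition. Assume $X_0(N)$ has an orbifold $\BQ$-subcanonical point $P$ that is not a cusp. By \Cref{cuspsub}, for a modular curve the orbifold $\BQ$-subcanonical points are exactly the points of the cuspidal torsion packet, so this means $P-(\text{cusp})$ is torsion in $\Pic^0(X_0(N))$. The first and easy step is to show that $N$ has no prime factor $p$ with $p\geq 23$ and $p\notin P_2$: if such a $p$ divided $N$, then writing $N=pM$ with $M=N/p$ a positive integer, the corollary of the Baker--Tamagawa theorem \cite{Bak} settling the Coleman--Kaskel--Ribet conjecture \cite{CKR} (Proposition~4.1 of loc.\ cit., recalled just before the statement) asserts that $X_0(pM)$ has no orbifold $\BQ$-subcanonical point outside the cusps, contradicting the existence of $P$. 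Hence every prime factor of $N$ lies in $P_1\cup P_2$, and it remains only to rule out that $N$ is divisible by $q^2$ for some $q\in P_2$, or by $qq'$ for two distinct $q,q'\in P_2$.

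For this second step I would first descend to small level. Since $\Gamma_0(N)\subset\Gamma_0(d)$ for every $d\mid N$, there is a natural map $X_0(N)\to X_0(d)$; on the moduli interpretation it is $(E,C)\mapsto(E,C[d])$, so it carries genuine elliptic curves to genuine elliptic curves, hence non-cuspidal points to non-cuspidal points, while by \Cref{cuspsubcor} it carries orbifold $\BQ$-subcanonical points to orbifold $\BQ$-subcanonical points. Taking $d=q^2$ (resp.\ $d=qq'$) it thus suffices to prove that $X_0(q^2)$ and $X_0(qq')$, for $q,q'\in P_2$, have no orbifold $\BQ$-subcanonical point outside the cusps. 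Now compose further with the two standard degeneracy maps $X_0(q^2)\rightrightarrows X_0(q)$ (resp.\ $X_0(qq')\to X_0(q)$ and $X_0(qq')\to X_0(q')$), each of which, up to an Atkin--Lehner automorphism (which fixes the orbifold canonical class since it permutes elliptic points and cusps preserving their orders), is again of the form to which \Cref{cuspsubcor} applies, hence preserves orbifold $\BQ$-subcanonicality and non-cuspidality. By \Cref{thm:subcan} each curve $X_0(q)$, $q\in P_2$, has only finitely many orbifold $\BQ$-subcanonical points, and these finite sets are explicitly known: the $X_0(q)$ are hyperelliptic and their cuspidal torsion packets have been determined in the circle of work around \cite{CKR,Bak}. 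A putative non-cuspidal orbifold $\BQ$-subcanonical point of $X_0(q^2)$ therefore maps into this finite known subset of $X_0(q)$ under each of the two degeneracy maps, which pins it down to a finite, explicitly enumerable list of candidates; one then checks that no candidate actually lies in the cuspidal torsion packet of $X_0(q^2)$ — most conveniently by testing the obstruction on the isogeny factors of $J_0(q^2)$ coming from genuinely new forms of level $q^2$, where the special coincidence responsible for the exceptional behaviour at the \emph{prime} levels of $P_2$ is no longer present — and similarly for $X_0(qq')$. Since $|P_2|=7$, this is a finite verification over the $7$ levels $q^2$ and the $21$ levels $qq'$.

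The hard part is precisely this last step: the only external ingredient, the Baker--Tamagawa theorem, is available only at prime level, so eliminating the ``doubly exceptional'' composite levels $q^2$ and $qq'$ with $q,q'\in P_2$ carries essentially all the content of the theorem. I expect it to be handled either by an explicit computation of the cuspidal torsion packets of these finitely many specific modular curves — feasible in principle because \Cref{thm:subcan}, via the uniform Manin--Mumford bound \cite{Kuh}, already restricts the search to finitely many points on each curve — or by adapting the Baker--Tamagawa method to show that the torsion-packet obstruction on the new part of $J_0(q^2)$ and of $J_0(qq')$ vanishes away from the cusps. The explicit computation is probably the more practical route.
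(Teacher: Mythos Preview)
Your reduction — first restricting the prime factors of $N$ to $P_1 \cup P_2$ via the Baker--Tamagawa corollary, then descending via \Cref{cuspsubcor} to the levels $X_0(qq')$ and $X_0(q^2)$ with $q,q' \in P_2$ — matches the paper exactly. Where you diverge is in treating the final step as a finite computational verification over $28$ levels; the paper instead gives a short conceptual argument and no computation at all is needed. The input you underuse is not merely that the cuspidal torsion packets of $X_0(p)$ for $p \in P_2$ are \emph{known}, but \emph{what they are}: by \cite{Bak}, the non-cuspidal points in the packet are fixed points of the hyperelliptic Atkin--Lehner involution on $X_0(p)$, hence CM points lying in a single fixed order of $\BQ(\sqrt{-p})$.

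With this structural fact both remaining cases fall at once. For $X_0(qq')$ with $q \neq q'$, a non-cuspidal orbifold $\BQ$-subcanonical point would map, under the two projections, to CM points with CM fields $\BQ(\sqrt{-q})$ and $\BQ(\sqrt{-q'})$ respectively — impossible, since a CM elliptic curve has a unique CM field. This already forces $q=q'$. For $X_0(p^2)$, the two degeneracy maps you yourself invoke send a CM point to CM points on $X_0(p)$ whose conductors differ by a factor of $p$, yet both images, being non-cuspidal orbifold $\BQ$-subcanonical points of $X_0(p)$, must lie in that same fixed order — contradiction. So your proposed case-by-case enumeration is unnecessary, and as written your argument is incomplete because that enumeration is only promised, never performed.
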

\begin{proof}

By the discussion above the theorem, we may assume that $N$ is a product of primes in $P_1\coprod P_2$, 
Moreover, if $p$ is a prime in $P_2$, it was proved in loc. cit. that 
 $\BQ$-subcanonical points are fixed points of the hyperelliptic Atkin--Lehner involutions, 
 and thus are CM points whose endomorphism algebras are the same  order in 
 $\BQ(\sqrt{-N})$. The conductor of this order is called the conductor of these CM points.

 We have assumed that $N$ is a product of primes in $P_1\coprod P_2$, 
To prove the proposition, only need to prove the following 
 claim: if $N=pqM$ where $p,q\in P_2$ and $M$ is positive integer, then
$X_0(N)$ has no orbifold  $\BQ$-subcanonical points  except cusps. 
 By \Cref{cuspsubcor}, we may assume $M=1$. 
Then if $x\in X_0(pq)$ is an  orbifold  $\BQ$-subcanonical point outside the cusps, then its image in 
$X_0(p),X_0(q)$ are both CM points with CM fields
 $\BQ(\sqrt{-p}),\BQ(\sqrt{-q})$. So $x$ is a CM point with CM field  $\BQ(\sqrt{-p}),\BQ(\sqrt{-q})$, and  therefore $p=q$. 
 Recall that $X_0(p^2)$ classifies pairs $(E,C)$ where $E$ is an elliptic curve and $C$ is a  finite subgroup scheme of $E$ of order $p^2$. The standard $\phi:X_0(p^2)\to X_0(p) $
sends $(E,C)$ to $(E,C[p])$.  
Also consider $\psi:X_0(p^2)\to X_0(p) $ that sends $(E,C)$ to $(E,C/C[p])$. 
 Then the conductor of $\psi(x)$ is at least $p$ times the conductor of $\phi(x)$.
But  $\psi(x)$ is an  orbifold  $\BQ$-subcanonical point
 by \Cref{cuspsubcor} and it is outside the cusps, a contradiction.
\end{proof}
    In particular, if there are  infinitely many $N$ such that 
$X_0(N)$ has orbifold  $\BQ$-subcanonical points outside cusps, then by \Cref{cuspsubcor} for some $p\in P_1$,
$X_0(p^n)$ has orbifold  $\BQ$-subcanonical points outside cusps for every positive $n$.
Moreover, there is a sequence of orbifold  $\BQ$-subcanonical points $x_n\in X_0(p^n)$
gives a point in $\vpl_n X_0(p^n)$. For this tower of modular curves, there could be 
  more tools to study it. For example, use perfectoid spaces as in \cite{Qiu1}.

 \begin{eg}
  
We give an example of a non-hyperelliptic $X_0(N)$ with $\BQ$-subcanonical points  outside cusps. By  \cite[Theorem 2 (i)]{Ogg1} and that $3\equiv -1 \ \mod 4,\ 2\equiv -1 \mod 3$, $X_0(42)$ has no elliptic points and is non-hyperelliptic.
On  $X_0(42)$, there are 3 newforms $f_1,f_2,f_3$ of level $14,21,42$ respectively, as shown on LMFDB \cite{LMFDB}.  And each of them contribute to $2,2,1$ of the genus 5 of $X_0(42)$ by the theory of new forms. 
The unique quadratic order of conductor 2 in $\BQ(\sqrt{-3})$ has class number 1.
Then the  set of Heegner (CM) points defined by this order  in \cite[1.1 Introduction]{CST} has cardinality one. Let $\infty$ be a cusp and $x$ be this unique Heegner point. We sketch how to show that $[x]-[\infty] = 0$ in $ \Ch^1(X_0(42))$, equivalently, the $f_i$-component of $[x]-[\infty]$ is 0 for $i=1,2,3$, using   the formulation in \cite[1.3.2]{YZZ} with  invariant linear forms on the automorphic representation corresponding to $f_i$. We need to show the vanishing of these   spaces of invariant linear forms.  
At  a prime $p$ (in $2,3,7$)  dividing the level of $f_i$,
the local $\PGL_2(\BQ_p)$-representation is a twist of the Steinberg representation 
by one of the only two unramified quadratic characters $\pm 1$ of $\BQ_p^\times$. 
The twisting character is opposite to the Atkin--Lehner sign which can be read   from LMFDB. Then by \cite[Lemma 3.1 (3)]{CST}, we find the vanishing of the local (and thus global) invariant linear forms at one of $p$ dividing the level of $f_i$. 
So the $f_i$-components of $[x]-[\infty]$ is 0 for $i=1,2,3$. 
Alternatively, one may also use the general Gross--Zagier formula \cite{YZZ} in this case and read the  rank of the twisted $L$-functions of $f_i$'s by $\BQ(\sqrt{-3})$, which are 0, from LMFDB.
 \end{eg}

For   Shimura curves over $\BQ$,
 an analog of  the work of  Baker and Tamagawa \cite{Bak} on Coleman--Kaskel--Ribet conjecture, and thus
 \Cref{thmtorsion}, might be done using a similar strategy. Indeed
  the work of  Baker and Tamagawa heavily relies on the primary part of Mazur's famous work \cite{Maz} on rational torsion on $X_0(p)$. The analog of this result (the primary part) was largely proved  in \cite{Yoo}.

 In view of the relation between Hurwitz curves and Shimura curves, we also ask if the analog of \Cref{conj1} holds for Hurwitz curves.

\begin{conj}\label{conj3}There are only finitely many  $\BQ$-subcanonical Hurwitz curves.
\end{conj}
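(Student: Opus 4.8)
The plan is to deduce \Cref{conj3} from two ingredients: the uniform Manin--Mumford bound of \Cref{thm:subcan}, and the exceptionally large automorphism group of a Hurwitz curve. First I would record the structure of the $\BQ$-subcanonical locus. Let $X$ be a Hurwitz curve of genus $g\geq 2$, write $G=\Aut(X)$, and let $S_X\subset X$ be the set of $\BQ$-subcanonical points. If $P,Q\in S_X$ then $[P]=[K]/(2g-2)=[Q]$ in $\Ch^1(X)$ (with our $\BQ$-coefficients), so $P-Q$ is torsion in $\Pic^0_{X/\BC}$ and $S_X$ lies in a single torsion packet; and since $[K]$ is $G$-invariant, $\phi(P)\in S_X$ for every $\phi\in G$, so $S_X$ is a union of $G$-orbits.

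Next I would run the counting argument. By \Cref{thm:subcan} there is a constant $C(g)$, depending only on $g$, with $\#S_X\leq C(g)$. On the other hand $\lvert G\rvert=84(g-1)$, and because $X$ is uniformized by a torsion-free normal subgroup of the $(2,3,7)$-triangle group, every point stabilizer in $G$ is cyclic of order $1$, $2$, $3$ or $7$; hence every $G$-orbit in $X$ has at least $\lvert G\rvert/7=12(g-1)$ points. So $S_X\neq\emptyset$ forces $12(g-1)\leq C(g)$, which bounds $g$ provided $C(g)<12(g-1)$ for all large $g$; and for each of the finitely many remaining genera there are only finitely many Hurwitz curves, since a finitely generated group has only finitely many subgroups of a given index. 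This would complete the proof.

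The main obstacle is clear: the argument reduces \Cref{conj3} to showing that the uniform Manin--Mumford bound $C(g)$ grows strictly more slowly than $12(g-1)$ --- ideally linearly with small slope, or even boundedly. This is well beyond what the bounds of \cite{Kuh} and their refinements currently give, which are at best polynomial in $g$. What makes a linear bound plausible is how rigid $\BQ$-subcanonical points are: for the \emph{exact} condition $(2g-2)P\sim K$ one has at most $g$ such points, because a holomorphic $1$-form with a zero of order $2g-2$ at $P$ determines $P$, and such forms for distinct points span distinct lines in the $g$-dimensional space $H^0(X,K)$. The hard part is to prove an analogous linear (or better) bound in the presence of torsion, equivalently a bound below $12(g-1)$ for the number of points of a genus-$g$ curve contained in one torsion packet; I expect this, rather than anything specific to Hurwitz curves, to be the real content.

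Finally, one could pursue a complementary reduction to \Cref{conj1}. Every Hurwitz curve $X=X_\Gamma$ admits finite morphisms to congruence Hurwitz curves $X_{\Gamma'}$, which are Shimura curves over $\BQ(\zeta_7+\zeta_7^{-1})$, and since the orbifold canonical class is compatible under pushforward, $\BQ$-subcanonicality of $X$ forces orbifold $\BQ$-subcanonicality of such an $X_{\Gamma'}$; by \Cref{conj1} only finitely many targets $X_{\Gamma'}$ could occur. The gaps are that one must bound the degree $[\Gamma':\Gamma]$ --- ruling out infinite towers of $\BQ$-subcanonical Hurwitz curves over a fixed Shimura curve, again a uniformity-in-towers question --- and that one must separately handle ``congruence-dense'' $\Gamma$, for instance those with $G\cong A_n$ for large $n$, which is not a congruence quotient of $\Delta(2,3,7)$ and so admits no nontrivial Shimura-curve target. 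For that reason the uniform-Manin--Mumford route above, strengthened as indicated, seems to me the more promising one.
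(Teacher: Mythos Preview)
The statement you are addressing, \Cref{conj3}, is posed in the paper as an open \emph{conjecture}; the paper offers no proof, only the motivation that it is the Hurwitz-curve analogue of \Cref{conj1}, followed by the example of Klein's quartic. So there is no argument in the paper to compare your proposal against.

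As for the proposal itself, you have already correctly identified the genuine gap: your counting argument reduces \Cref{conj3} to the inequality $C(g)<12(g-1)$ for all large $g$, where $C(g)$ is the uniform Manin--Mumford constant of \Cref{thm:subcan}, and no such sublinear (indeed, no explicit polynomial of degree $1$) bound is known from \cite{Kuh} or its successors. The structural observations you make---that $S_X$ is $G$-stable and lies in a single torsion packet, and that every $G$-orbit has size at least $12(g-1)$---are correct and give a clean reduction, but the missing input is a major open problem in its own right, not a lemma one can expect to supply. Your alternative route via \Cref{conj1} is also incomplete for the reasons you note: it presupposes another open conjecture, it does not cover non-congruence Hurwitz groups (your $A_n$ example is apt), and even in the congruence case it leaves the tower-degree unbounded. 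In short, your write-up is an honest outline of possible strategies with their obstructions, not a proof; this matches the status of \Cref{conj3} in the paper.
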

\begin{eg}\label{Klein} 
 The first Hurwitz curve, Klein's quartic $x^3y+y^3z+z^3x = 0$ with automorphism group $\PSL_2(\BF_7)$, has three avatars in our context.
 First, it is the modular curve $X(7)$ by Elkies \cite{Elk}. So its cusps are  $\BQ$-subcanonical-points. Second, from its Hurwitz representation, its quotient  by 
a subgroup of $\PSL_2(\BF_7)$ isomophic to $\BZ/7\BZ$ or $\BZ/2\BZ\times \BZ/2\BZ$ is $\BP^1$. So \Cref{Colprop} applies and gives $\BQ$-subcanonical-points. Finally, it is the Shimura curve over $F=\BQ(\zeta_7+\zeta_7^{-1})$ in \Cref{Hurwitz} (2) associated to the ramified prime over $7$. This also gives an example to \Cref{conj1}.

   \end{eg}

\subsection{Cyclic unramified coverings of  hyperelliptic curves}\label{coverhec}

The following lemma  direct from definition is useful.
\begin{lem}\label{lemunr}
  Let $Y\to X$ be an unramified covering. 
  The image of a $\BQ$-subcanonical point on $Y$ is a $\BQ$-subcanonical point on $X.$
\end{lem}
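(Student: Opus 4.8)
The plan is to argue directly from the definition, using only how canonical divisors and $0$-cycles behave under pushforward along an unramified map. Write $f\colon Y\to X$ for the covering, a finite \etale morphism of smooth projective curves over $\BC$, and let $d$ be its degree. The first step is to observe that, since $f$ is unramified, the sheaf of relative differentials $\Omega_{Y/X}$ vanishes, so $\omega_Y\cong f^*\omega_X$; hence one may take $K_Y=f^*K_X$ for a canonical divisor $K_X$ of $X$, and then $\deg K_Y=d\deg K_X$.

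The second step is a short computation in $\Ch^1$ (with $\BQ$-coefficients, as throughout the section). By the projection formula, $f_*f^*$ is multiplication by $d$ on $\Ch^*(X)$, so
\begin{equation*}
f_*\!\lb \frac{[K_Y]}{\deg K_Y}\rb=\frac{f_*f^*[K_X]}{d\deg K_X}=\frac{d\,[K_X]}{d\deg K_X}=\frac{[K_X]}{\deg K_X}.
\end{equation*}
Then, if $P\in Y$ is $\BQ$-subcanonical, i.e.\ $[P]=[K_Y]/\deg K_Y$ in $\Ch^1(Y)$, one pushes this rational equivalence forward along $f$; since all residue fields are $\BC$ one has $f_*[P]=[f(P)]$, and the display above yields $[f(P)]=[K_X]/\deg K_X$ in $\Ch^1(X)$, i.e.\ $f(P)$ is $\BQ$-subcanonical.

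There is essentially no real obstacle here --- that is why the lemma is ``direct from definition''. The only point that needs a moment's care is the bookkeeping of the degree $d$, which cancels between $f_*f^*=d$ and $\deg K_Y=d\deg K_X$; one should also make sure the covering is finite, which it is since it is an \etale morphism of projective curves. One can alternatively phrase the argument through the torsion-point interpretation of the preceding subsection, the pushforward $\Pic^0_{Y/\BC}\to\Pic^0_{X/\BC}$ carrying the torsion point $i_\xi(P)$ to a torsion point, but the Chow-level computation above is the shortest.
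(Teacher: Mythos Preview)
Your argument is correct and is exactly the computation the paper has in mind: the paper does not spell out a proof at all, merely remarking that the lemma is ``direct from definition,'' and your pushforward calculation using $K_Y=f^*K_X$ and $f_*f^*=d$ is the natural way to unwind that phrase. Nothing further is needed.
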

 A result of Poonen of Stoll \cite{PS} says that
 for a generic hyperelliptic curve, the only $\BQ$-subcanonical points are 
 Weierstrass points. Thus, we have the following.
\begin{thm}\label{thmunr}
  Let    $X$ be an unramified covering of a generic hyperelliptic curve.
  The only $\BQ$-subcanonical points on $X$ are points in the preimages of 
 Weierstrass points.  
 \end{thm}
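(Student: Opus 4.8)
The plan is to transport $\BQ$-subcanonical points down along the covering map and then invoke the Poonen--Stoll theorem on the base. Write $f\colon X\to C$ for the given unramified covering, with $C$ a generic hyperelliptic curve, and let $P\in X$ be a $\BQ$-subcanonical point. First I would apply \Cref{lemunr} to conclude that $f(P)$ is $\BQ$-subcanonical on $C$. For completeness the one-line reason is: since $f$ is unramified, $K_X=f^*K_C$, so the projection formula gives $f_*[K_X]=(\deg f)\,[K_C]$, while Riemann--Hurwitz gives $\deg K_X=(\deg f)\,\deg K_C$; applying $f_*$ to the identity $[P]=[K_X]/\deg K_X$ in $\Ch^1(X)$ then yields $[f(P)]=[K_C]/\deg K_C$ in $\Ch^1(C)$.

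Next I would quote the result of Poonen and Stoll \cite{PS}: a generic hyperelliptic curve has no $\BQ$-subcanonical points other than its Weierstrass points. In the torsion-point reformulation recalled earlier in this section, one takes $\xi$ to be the class of a Weierstrass point $W$ of $C$ --- legitimate because $(2g_C-2)W$ is a canonical divisor on the hyperelliptic curve $C$ --- so that the $\BQ$-subcanonical points of $C$ are exactly the $Q\in C$ with $[Q]-[W]$ torsion in $\Pic^0_{C/\BC}$, and Poonen--Stoll assert that for generic $C$ the only such $Q$ are the Weierstrass points. Combining the two steps, $f(P)$ is a Weierstrass point of $C$, hence $P$ lies in the preimage under $f$ of the Weierstrass locus, which is the asserted conclusion.

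The hard part here is essentially nonexistent: the statement is a formal consequence of the functoriality lemma \Cref{lemunr} together with the quoted genericity theorem, and the only steps needing a moment's care are the unramified Riemann--Hurwitz/projection-formula bookkeeping in the first paragraph and the dictionary between ``torsion point of the Jacobian'' and ``$\BQ$-subcanonical'' in the second. If one wanted a quantitative refinement --- a bound, depending only on the genus of $X$, on the number of $\BQ$-subcanonical points of $X$ lying above non-Weierstrass points of an \emph{arbitrary} hyperelliptic base --- one would run \Cref{thm:subcan} for $C$ through the same pushforward argument, but for the qualitative statement as given nothing beyond \Cref{lemunr} and \cite{PS} is needed.
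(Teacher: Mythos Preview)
Your proposal is correct and follows exactly the paper's approach: the paper derives the theorem immediately from \Cref{lemunr} together with the Poonen--Stoll result, stating only ``Thus, we have the following'' after recalling these two ingredients. Your write-up is in fact more detailed than the paper's, spelling out the Riemann--Hurwitz/projection-formula justification for \Cref{lemunr} and the torsion-point dictionary, but the logical structure is identical.
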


Let $d$ be an odd  number. 
 Let $f:X\to X'$ be 
 a cyclic unramified covering of degree $d$ where $X' $ is hyperelliptic of  genus $g'\geq 2$.
By \cite[Proposition 2.1]{NOPS}, the moduli of such coverings is irreducible. 
 \begin{conj}\label{conjcyc}
 The generic $X$ has no $\BQ$-subcanonical points. 
 \end{conj}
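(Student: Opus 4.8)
The plan is to reduce Conjecture~\ref{conjcyc} to the non-torsion of a canonically defined point on the Prym variety of $f\colon X\to X'$ as it varies over the (irreducible) moduli of such coverings, and then to kill torsion by exhibiting one non-torsion member of the family.

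First I would cut down to preimages of Weierstrass points. The moduli space $S$ of pairs (a hyperelliptic curve $X'$ of genus $g'$, a connected \'etale $\BZ/d\BZ$-covering $f\colon X\to X'$) is irreducible by \cite[Proposition~2.1]{NOPS}, and the forgetful map $S\to\mathcal H_{g'}$ to the hyperelliptic locus is dominant (the connected $\BZ/d\BZ$-coverings of a fixed $X'$ are classified by the surjections in $\Hom(\pi_1(X'),\BZ/d\BZ)$, and $\Jac(X')[d]\neq 0$). Hence the generic $X\in S$ lies over a generic hyperelliptic $X'$, so \Cref{thmunr} applies: over a dense open of $S$, the only $\BQ$-subcanonical points of $X$ lie in $f^{-1}(W)$ for $W$ a Weierstrass point of $X'$.

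Next I would reduce to a single point and reformulate via the Prym. Let $\sigma$ generate $\Gal(X/X')$; since $f^{*}W=\sum_{j=0}^{d-1}\sigma^{j}P$ is $\sigma$-invariant and $\sigma\in\Aut(\Jac X)$ preserves torsion, all $d$ points of $f^{-1}(W)$ are simultaneously $\BQ$-subcanonical or not, and since the monodromy of the family realizes the full symmetric group on the $2g'+2$ Weierstrass points of the generic $X'$, it suffices to treat one point $P_{0}\in f^{-1}(W_{0})$ for one Weierstrass point $W_{0}$; after a finite \'etale base change $S'\to S$ we may take $P_{0},W_{0}$ to be sections. From Riemann--Hurwitz $\deg K_{X}=d(2g'-2)$ together with $K_{X}\sim f^{*}K_{X'}\sim (2g'-2)\,f^{*}W_{0}$ (using $K_{X'}\sim(2g'-2)W_{0}$ on the hyperelliptic $X'$), the point $P_{0}$ is $\BQ$-subcanonical if and only if the class $\eta:=[\,dP_{0}-f^{*}W_{0}\,]=\sum_{j=0}^{d-1}[\,P_{0}-\sigma^{j}P_{0}\,]\in\Jac X$ is torsion. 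Because $d-\sum_{j}\sigma^{j}$ lies in the augmentation ideal of $\BZ[\BZ/d\BZ]$, this $\eta$ is killed by $\Nm_{f}$ and lies in the connected component of $\ker\Nm_{f}$, i.e.\ in $\mathrm{Prym}(X/X')$ (which for $d$ prime is $J_{0}\times J_{1}$, \eqref{J0J1}). Thus the conjecture is equivalent to: for generic $s\in S'$ the Prym point $\eta_{s}$ is non-torsion.

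The hard part will be this last assertion. The locus $T=\{s\in S':\eta_{s}\ \text{torsion}\}$ is a countable union of closed subvarieties --- and in fact closed, since a $\BQ$-subcanonical point has torsion order bounded only in terms of $g$ by the uniform Manin--Mumford bound \cite[Theorem~2]{Kuh} (cf.\ \Cref{thm:subcan}) --- so by irreducibility of $S'$ it suffices to produce one $s_{0}$ with $\eta_{s_{0}}$ non-torsion. I see two routes. Route (a), arithmetic, in the spirit of Baker--Tamagawa \cite{Bak} and of Coleman's torsion-packet computations for abelian covers: realize a well-chosen member over a number field, bound the possible torsion order of $\eta_{s_{0}}$ a priori, and kill it by computing the reduction of $\eta_{s_{0}}$ at a prime of good reduction, where the relevant abelian part carries real multiplication by $\BQ(\zeta_{d}+\zeta_{d}^{-1})$ and the computation becomes effective. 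Route (b), monodromy/Betti-map: show that the multivalued real-analytic map $S'(\BC)\to(\BR/\BZ)^{2\dim\mathrm{Prym}}$ attached to the section $\eta$ has maximal rank at some point --- a variational computation of the derivative of $\eta$ against the Kodaira--Spencer classes of the family --- which by the Pila--Zannier style ``Manin--Mumford in families'' machinery forces $T$ to be a proper subset. Route (b) is cleaner in principle; route (a) is presumably what yields the ``minimal'' case (small $d$, $g'=2$, where the Prym is of $\GL_{2}$-type with maximal real multiplication and the torsion-packet bound is effective), which is exactly why the uniform statement for arbitrary odd $d$ and $g'\ge 2$ is left as a conjecture.
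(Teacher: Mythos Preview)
The statement is a \emph{conjecture}; the paper does not prove it in general, only the case $d=3$, $g'=2$ (\Cref{thmcyc}). Your outline is not a proof but a plausible strategy, and you correctly flag this at the end.

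Your reduction matches the paper's approach to the special case almost exactly: use irreducibility of the moduli of such coverings, apply Poonen--Stoll (via \Cref{lemunr}, giving \Cref{thmunr}) to confine possible $\BQ$-subcanonical points to preimages of Weierstrass points, and then observe that it suffices to exhibit a single member of the family for which these preimages are not $\BQ$-subcanonical (the torsion locus being at worst a countable union of proper closed subsets, or closed outright via uniform Manin--Mumford). Your Prym reformulation of the remaining condition is correct and clean, though the paper does not phrase it this way.

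Where you diverge is in the method for producing the one non-torsion example. Neither of your Routes (a) (reduction modulo a good prime) nor (b) (Betti-map/monodromy rank) is what the paper does. Instead, for $d=3$, $g'=2$ the paper constructs the example as a cover of \emph{Shimura curves}: taking $B/\BQ$ the quaternion algebra ramified exactly at $\{2,13\}$ and a suitable level lowering at $2$, the resulting map $X_{B^+\cap V}\to X_{B^+\cap U}$ is cyclic \'etale of degree $3$ over a genus-$2$ hyperelliptic curve (\Cref{propcyc}(1)). The Weierstrass preimages are then CM points by $\BQ(\sqrt{-26})$, and non-torsion is established via the general Gross--Zagier formula of \cite{YZZ}: one checks local Tunnell--Saito conditions and computes numerically (via Magma) that the relevant $L'$-value is nonzero. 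This is rather different from bounding torsion orders and reducing mod $p$; it trades effectivity for a height-pairing argument tied to the arithmetic of a specific Shimura curve, and it does not obviously generalize to other $(d,g')$ --- which, as you say, is why the full statement remains open.
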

 \begin{rmk}
  When $d=2$, 
  the analog of the conjecture gets complicated due to reducibility of the moduli. 
  Moreover, if $g'=2$, $X$ is also hyperelliptic. See \cite{Fak}.
 \end{rmk}
By  \Cref{lemunr}, 
 \Cref{conjcyc} holds if it holds for all $d$ odd and prime.
 We have the  ``minimal'' case of \Cref{conjcyc} in \Cref{thmcyc} below.
A key ingredient comes from Shimura curves that we now describe.

Let $B$ be a quaternion algebra over $\BQ$  that is     division only at $2,13$.
 Let  $O$ be a maximal order  of $B$,
unique up to  
conjugation.  
Let $U=\wh O^\times$. By   \cite[Theorem 7]{Ogg1}, the Shimura curve $X_{B^1\cap U}$ over $\BC$ is hyperelliptic of genus 2.
By \cite[Lemma B.0.4]{Qiufin}, we have 
the natural map $X_{B^1\cap U}\to X_{B^+\cap U}$ is an isomorohism.
 
 Let $\fm_2$ be the maximal ideal of $O_2$ (the completion of $O$ at the place $2$) and 
 $$V = (1+\fm_2)\times\prod_{v\neq 2 \text{ finite}}  O_v^\times.$$
\begin{prop}\label{propcyc}
(1) The morphism $X_{B^+\cap V}\to X_{B^+\cap U} $ is cyclic unramified of degree 3.

(2) Any point in the preimages of  Weierstrass points of $X_{B^+\cap U} $ is not $\BQ$-subcanonical.
\end{prop}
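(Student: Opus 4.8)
I would treat the two parts separately. \textbf{Part (1).} First I record that $V$ is normal in $U$ with $U/V\cong (O_2/\fm_2)^\times$, and since $O_2$ is the (unique) maximal order of the division quaternion algebra $B\otimes_\BQ\BQ_2$, its residue algebra is $\BF_4$, so $U/V\cong\BF_4^\times\cong\BZ/3$. Hence $B^+\cap V$ is normal in $B^+\cap U$ and $X_{B^+\cap V}\to X_{B^+\cap U}$ is Galois with deck group $(B^+\cap U)/(B^+\cap V)$, which embeds into $U/V$; it remains to see this embedding is onto. Here $B^+\cap U=\{b\in O^\times:\nrd(b)>0\}$ equals $O^1=B^1\cap\wh O^\times$, because $\nrd(b)\in\BZ^\times=\{\pm1\}$ and $B$ is split at $\infty$. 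I claim $O^1$ surjects onto $\BF_4^\times$: the group $\BZ_2^\times/\nrd(1+\fm_2)$ is at once a quotient of $U/V\cong\BZ/3$ (via $\nrd$) and a quotient of $\BZ_2^\times$, hence a $2$-group, hence trivial, so $\nrd(1+\fm_2)=\BZ_2^\times$ and $O_2^1\cdot(1+\fm_2)=O_2^\times$, so already $O_2^1\twoheadrightarrow\BF_4^\times$; strong approximation for $B^1$ (noncompact at $\infty$) then yields $\gamma\in O^1$ with $\gamma_2\notin 1+\fm_2$, i.e.\ mapping to a generator of $\BF_4^\times$. This proves the cover is cyclic of degree $3$. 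For unramifiedness I use that $X_{B^+\cap U}=X^{26}$ has no elliptic points — the genus formula for Shimura curves forces this since $X^{26}$ has genus $2$ — so $B^+\cap U$ is torsion-free and acts freely on $\BH$, whence so does $B^+\cap V$ and the quotient map is an honest covering.

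\textbf{Part (2): reduction.} Write $X'=X_{B^+\cap U}$ (hyperelliptic of genus $2$), $X=X_{B^+\cap V}$, $f\colon X\to X'$ the cyclic unramified degree-$3$ cover of part (1), and $\sigma$ a generator of $\Gal(X/X')$; then $g(X)=4$ and $K_X=f^*K_{X'}$ since $f$ is unramified. For a Weierstrass point $w$ of $X'$ one has $2w\sim K_{X'}$, so for any preimage $\tilde w$,
\[
6\tilde w-K_X=6\tilde w-2f^*w=2\bigl(2\tilde w-\sigma\tilde w-\sigma^2\tilde w\bigr).
\]
The class $2\tilde w-\sigma\tilde w-\sigma^2\tilde w$ lies in $\ker f_*$, and in the Prym factor $P$ (where $1+\sigma+\sigma^2=0$) it equals $(2+\sigma)(\tilde w-\sigma\tilde w)$, with $2+\sigma$ invertible in $\End(P)_\BQ$ (its norm in $\BZ[\zeta_3]$ is $3$). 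Hence $\tilde w$ is $\BQ$-subcanonical on $X$ if and only if $\tilde w-\sigma\tilde w$ is torsion in $\Pic^0(X)$, equivalently in $P$.

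\textbf{Part (2): non-triviality.} Since $d=3$ we have $\BQ(\zeta_3+\zeta_3^{-1})=\BQ$, and (as in \Cref{1egthm} and \cite{NOPS}) $P\sim J_0\times J_1$ with $J_i=\Pic^0(C_i)$ and $C_i=X/\langle\tau\sigma^i\rangle$ \emph{elliptic curves}, $\tau$ being an involution lifting the hyperelliptic involution of $X'$. Each preimage $\tilde w$ of $w$ is fixed by exactly one of $\tau,\tau\sigma,\tau\sigma^2$; taking it to be $\tau$ and pushing $\tilde w-\sigma\tilde w$ forward along $X\to C_0$ gives a class $\bar w_0-\bar w_1\in\Pic^0(C_0)$ with $\bar w_0$ a branch point of $X\to C_0$, so it suffices to show this is non-torsion in the elliptic curve $C_0$. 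By Ogg's classification the hyperelliptic involution of $X^{26}$ is the Atkin--Lehner involution $w_{26}$, so the Weierstrass points are the CM points with CM by the order $\BZ[\sqrt{-26}]$ of $K=\BQ(\sqrt{-26})$; since $2$ and $13$ both ramify in $K$, the Heegner hypothesis holds. The curve $C_0$, being an Atkin--Lehner quotient of a Shimura curve, is modular, and Jacquet--Langlands (depth-zero supercuspidal at $2$, Steinberg at $13$, level $52$) identifies it up to isogeny with the conductor-$52$ elliptic curve $E$; the class $\bar w_0-\bar w_1$ is then a nonzero multiple of a Heegner point on $E$ for $K$ and a ring class field. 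By the Gross--Zagier formula and Kolyvagin's theorem this point has infinite order, because $L(E/K,s)$ has root number $-1$ and analytic rank exactly $1$ — equivalently, from LMFDB, $E/\BQ$ has rank $0$ and the quadratic twist $E^{(-26)}/\BQ$ has rank $1$ (or vice versa); one checks moreover that the order-$3$ deck transformation $\sigma$ acts on $\tilde w$ through a nontrivial element of $\Gal(H/K)$ (possible since $3\mid h(-104)=6$) and so does not annihilate the Heegner point.

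\textbf{Main obstacle.} Part (1) and the reduction in (2) are essentially formal; the substantive step is the last one — pinning down $\bar w_0-\bar w_1$ as a genuine Heegner point, which requires Shimura reciprocity to control the deck-group action on CM points, and then feeding it into Gross--Zagier, Kolyvagin, and the non-vanishing of the relevant $L$-derivative for the conductor-$52$ curve. That arithmetic input of a nonzero $L'$-value is where the real content lies.
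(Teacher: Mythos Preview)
\textbf{Part (1)} is essentially correct. You reach degree $3$ via strong approximation for $B^1$, whereas the paper instead shows the cover is nontrivial by exhibiting the level-$52$ newform $\phi$ on $X_V$ that is absent on $X_U$ (so the genus must jump). Your ``the genus formula forces no elliptic points'' is not an argument; the paper invokes Ogg's criterion ($13\equiv 1\bmod 3$ and $13\equiv 1\bmod 4$) for this.

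\textbf{Part (2)} has a genuine gap in the final arithmetic step. Your reduction ``$\tilde w$ is $\BQ$-subcanonical $\Leftrightarrow$ $\tilde w-\sigma\tilde w$ is torsion in $P$'' is correct, and projecting to the elliptic factor $J_0\sim E$ (conductor $52$) and appealing to Gross--Zagier is the right idea. But the key input is wrong: $L(E/K,s)=L(\phi_K,s)$ has global root number $+1$, not $-1$. The paper checks this via Tunnell--Saito --- the local invariant functional for the \emph{trivial} character vanishes at $13$, i.e.\ $\Hom(\pi_{13}|_{K_{13}^\times},\mathbf 1)=0$ --- and confirms it by a Magma evaluation $L(\phi_K,1)\approx 1.86\neq 0$. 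So the untwisted Heegner input you invoke is unavailable. There is also a structural mismatch: if $\sigma$ acts on $\tilde w$ through some $\gamma\in\Gal(H/K)$ of order $3$, then $(1-\gamma)$ kills exactly the components for characters $\psi$ with $\psi(\gamma)=1$, namely the trivial and the quadratic character; so even a rank-$1$ statement for $L(E/K,s)$ would say nothing about $\tilde w-\sigma\tilde w$, and you would instead need $L'(\phi_K\otimes\psi,1)\neq 0$ for some $\psi$ of order $3$ or $6$, which you do not address.

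The paper avoids both issues by staying on $X_V$ and applying the general Gross--Zagier formula of Yuan--Zhang--Zhang to the \emph{$\chi$-twisted} CM cycle, where $\chi$ is the unique quadratic class-group character of $K$. For this twist the local invariant-functional conditions hold at both $2$ and $13$, the root number is $-1$, and one computes (via the factorization $L(\phi_L,s)=L(\phi_K,s)L(\phi_K\otimes\chi,s)$ with $L=\BQ(\sqrt{-2},\sqrt{13})$ together with Magma) that $L'(\phi_K\otimes\chi,1)\neq 0$. Your framework could in principle be repaired by computing the analogous $L'$-value for an order-$3$ character, but as written the argument does not go through.
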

Then by  \Cref{thmunr}, we have the following ``minimal'' case of \Cref{conjcyc}.
\begin{thm}\label{thmcyc}
  For $d=3,g'=2$, \Cref{conjcyc} holds.
\end{thm}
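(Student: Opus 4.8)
The plan is to reduce the generic statement to the single explicit covering produced in \Cref{propcyc}, via an irreducibility-and-specialisation argument over the relevant moduli space. Write $\cH$ for the moduli space of connected cyclic unramified degree-$3$ coverings $f\colon X\to X'$ of genus-$2$ hyperelliptic curves (all genus-$2$ curves being hyperelliptic); it is irreducible by \cite[Proposition 2.1]{NOPS}, and $X$ has genus $4$. Since every genus-$2$ curve $X'$ has $\Pic^0_{X'}[3]\neq 0$, it carries such a covering, so the forgetful morphism $\cH\to\cM_2$ is surjective, in particular dominant. Let $\cU'\subset\cM_2$ be a dense open over which \Cref{thmunr} applies, i.e.\ for $X'\in\cU'$ every $\BQ$-subcanonical point of every unramified covering of $X'$ lies over a Weierstrass point of $X'$; let $\cU\subset\cH$ be its preimage, a nonempty dense open. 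Thus for $[f\colon X\to X']\in\cU$, every $\BQ$-subcanonical point of $X$ lies in $f^{-1}(W_{X'})$, where $W_{X'}\subset X'$ is the set of the $6$ Weierstrass points.

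Next I would show that the bad locus
\[
Z:=\{\,[f\colon X\to X']\in\cH:\ \text{some point of }f^{-1}(W_{X'})\text{ is }\BQ\text{-subcanonical}\,\}
\]
is Zariski closed in $\cH$. By the discussion in \Cref{Subcanonical curves general}, a point $p\in X$ is $\BQ$-subcanonical if and only if $i_\xi(p)$ is a torsion point of $\Pic^0_X$, for the Abel--Jacobi embedding $i_\xi$ defined there; and by the uniform Manin--Mumford theorem \cite[Theorem 2]{Kuh} (the input to \Cref{thm:subcan}), the order of such a torsion point is bounded by a constant $N_0$ depending only on the genus, here on $4$. Working over the universal family, after a finite étale base change $\cH'\to\cH$ that splits the degree-$18$ multisection $f^{-1}(W)$ and trivialises the choice of $\xi$, each of the resulting $18$ sections $s\colon\cH'\to\cX$ produces a section $i_\xi\circ s$ of the relative Jacobian, and the condition $N_0!\cdot(i_\xi\circ s)=0$ cuts out a closed subscheme of $\cH'$; the union of these $18$ closed subschemes has closed image $Z$ in $\cH$. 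As $\cH$ is irreducible, $Z$ is therefore either all of $\cH$ or nowhere dense.

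It remains to exhibit one point of $\cH\setminus Z$, and this is exactly \Cref{propcyc}. The Shimura curve $X_{B^+\cap U}$ is hyperelliptic of genus $2$ (by \cite[Theorem 7]{Ogg1} and \cite[Lemma B.0.4]{Qiufin}, as recalled before \Cref{propcyc}); by \Cref{propcyc} (1) the morphism $X_{B^+\cap V}\to X_{B^+\cap U}$ is a connected cyclic unramified covering of degree $3$, so $[X_{B^+\cap V}\to X_{B^+\cap U}]$ is a point of $\cH$; and by \Cref{propcyc} (2) no point over a Weierstrass point of $X_{B^+\cap U}$ is $\BQ$-subcanonical, i.e.\ this point lies in $\cH\setminus Z$. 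Hence $Z$ is nowhere dense, so $\cU\setminus Z$ is a nonempty dense open of $\cH$; for $[f\colon X\to X']$ in it, \Cref{thmunr} forces every $\BQ$-subcanonical point of $X$ into $f^{-1}(W_{X'})$, while $[f]\notin Z$ forbids any point of $f^{-1}(W_{X'})$ from being $\BQ$-subcanonical. Therefore the generic $X$ has no $\BQ$-subcanonical point, which is \Cref{conjcyc} for $d=3$, $g'=2$.

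The formal skeleton — irreducibility of $\cH$, \Cref{thmunr}, and \Cref{propcyc} — is entirely in place, so the only real work is the second step: setting up $Z$ precisely in families and proving it is \emph{closed}. Handling the choice of $\xi$ and the splitting of $f^{-1}(W)$ is harmless (a finite étale base change), but it is essential to invoke the \emph{uniformity} in Manin--Mumford to replace the a priori countable union $\bigcup_{N\ge 1}\{\,N\cdot i_\xi(s)=0\,\}$ by the single closed condition $N_0!\cdot i_\xi(s)=0$; without uniformity one would only learn that a \emph{very general} $X$ has no $\BQ$-subcanonical point, not the Zariski-generic statement asserted.
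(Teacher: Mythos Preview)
Your argument is correct and is precisely what the paper's one-line proof (``Then by \Cref{thmunr}, we have the following'') leaves to the reader: irreducibility of the moduli space, \Cref{thmunr} to confine any $\BQ$-subcanonical point to $f^{-1}(W)$, and \Cref{propcyc} to rule such points out by specialisation. One minor overstatement in your final paragraph: uniform Manin--Mumford is convenient but not actually essential for the Zariski-\emph{generic} claim (only for upgrading it to a dense-open one) --- if the generic fibre carried a $\BQ$-subcanonical point $q\in f^{-1}(W)$ it would have some fixed torsion order $N$, and the single closed condition $N\cdot i_\xi(q)=0$ already specialises to the Shimura-curve fibre, contradicting \Cref{propcyc}(2) directly.
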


In the rest of this subsection,  we prove \Cref{propcyc}. 
 
By \cite[(B.1), Corollary B.0.2]{Qiufin}, we have the following.
\begin{lem}\label{lemcyc}
  (1) The Shimura curves $X_U, X_V$ over $\BQ$ are geometrically connected.  In particular, $X_{B^+\cap U} = X_{U,\BC}$ and $ X_{B^+\cap V} = X_{V,\BC} $. 
  
  (2) The natural morphisms between Shimura curves $X_U\to X_{U\aft}$ and 
  $ X_V\to X_{V\aft}$  are isomorphisms. 

\end{lem}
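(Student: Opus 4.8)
The plan is to deduce both parts from two standard inputs. The first is strong approximation for the reduced-norm-one group $B^{1}=\SL_1(B)$, which applies here because $B$ is split at the unique archimedean place of $\BQ$, so $B^{1}_{\infty}\simeq\SL_2(\BR)$ is noncompact. The second is that $\BQ$ has class number one, i.e.\ $\BA_\rf^{\times}=\BQ_{>0}^{\times}\cdot\wh\BZ^{\times}$ with $\BQ_{>0}^{\times}\cap\wh\BZ^{\times}=\{1\}$; equivalently $\BA_\rf^{\times}=\BQ^{\times}\cdot\wh\BZ^{\times}$. Throughout I write $\nrd$ for the reduced norm on $B$ and on $G=B^{\times}$, and I use that $\nrd\colon B_{v}^{\times}\to\BQ_{v}^{\times}$ is surjective at every finite $v$ (the split places by determinant, the ramified places $2,13$ by the usual surjectivity for local division algebras), that $\nrd(O_{v}^{\times})=\BZ_{v}^{\times}$ at every finite $v$ (again including the ramified primes, where $O_{v}$ is the maximal order of $B_{v}$), and that $\nrd(B^{\times})=\BQ^{\times}$ hence $\nrd(B^{+})=\BQ_{>0}^{\times}$ since no real place of $\BQ$ is ramified in $B$.

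For part (1): since $\BH$ is connected and there are no cusps ($B$ is division), the uniformization \eqref{compluni} gives $\pi_{0}(X_{W,\BC})\simeq B^{+}\bsl G(\BA_\rf)/W$ for $W$ any open compact-mod-center subgroup, the fibers of $X_{W,\BC}\to B^{+}\bsl G(\BA_\rf)/W$ being the connected quotients $(B^{+}\cap gWg^{-1})\bsl\BH$. Strong approximation for $B^{1}$ — applied after conjugating the open compact part of the level by a coset representative so that it remains compact — then shows that $\nrd$ induces a bijection $B^{+}\bsl G(\BA_\rf)/W\xrightarrow{\sim}\BQ_{>0}^{\times}\bsl\aft/\nrd(W)$. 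So it remains to compute $\nrd(U)$ and $\nrd(V)$. For $U=\wh O^{\times}$ one gets $\nrd(U)=\wh\BZ^{\times}$ by the local facts above. The group $V$ differs from $U$ only at $v=2$, where I claim $\nrd(1+\fm_{2})=\BZ_{2}^{\times}$: indeed $O_{2}^{\times}/(1+\fm_{2})\simeq\BF_{4}^{\times}\simeq\BZ/3$ while $\nrd(O_{2}^{\times})=\BZ_{2}^{\times}$, so $\BZ_{2}^{\times}/\nrd(1+\fm_{2})$ is at once a quotient of $\BZ/3$ and of the pro-$2$ group $\BZ_{2}^{\times}$, hence trivial. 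Thus $\nrd(V)=\wh\BZ^{\times}$ as well, and in both cases $\pi_{0}(X_{W,\BC})\simeq\BQ_{>0}^{\times}\bsl\aft/\wh\BZ^{\times}$, a point, by class number one. This gives geometric connectedness, and since $X_{B^{+}\cap W}$ is by construction a geometrically connected component of $X_{W,\BC}$, it equals all of $X_{W,\BC}$.

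For part (2): I would first record that $W\cap\aft=\wh\BZ^{\times}$ for $W\in\{U,V\}$, the only nontrivial place being $v=2$, where $(1+\fm_{2})\cap\BQ_{2}^{\times}=1+(\fm_{2}\cap\BQ_{2})=1+2\BZ_{2}=\BZ_{2}^{\times}$, using $\fm_{2}\cap\BQ_{2}=2\BZ_{2}$. Combined with $\aft=\BQ^{\times}\wh\BZ^{\times}$ this yields $\aft=\BQ^{\times}\cdot(W\cap\aft)$, so the finite group $\aft/\BQ^{\times}(W\cap\aft)$ acting on $X_{W}$ by $\rho$ is trivial. Concretely, on $\BC$-points the natural map is $B^{+}\bsl\BH\times G(\BA_\rf)/W\to B^{+}\bsl\BH\times G(\BA_\rf)/W\aft$; for $a\in\aft$ write $a=qe$ with $q\in\BQ^{\times}\subseteq B^{+}$ central and $e\in W\cap\aft\subseteq W$, so $gaW=q\,gW$, and the scalar $q$ acts trivially on $\BH$ since it dies in $B_{\tau}^{+}/\BR^{\times}\simeq\PSL_2(\BR)$; hence $[z,gaW]=[z,gW]$. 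Unwinding this in the double cosets shows $X_{W}\to X_{W\aft}$ is injective on $\BC$-points, and it is evidently surjective, so it is a bijective finite morphism of smooth proper curves over $\BQ$, hence an isomorphism.

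The whole argument is formal once one grants strong approximation for $B^{1}$ and $h_{\BQ}=1$; the only step calling for a genuine (if short) local computation, and hence the point to treat most carefully, is the behaviour of the reduced norm and of $\BQ_{2}^{\times}$ inside $B_{2}^{\times}$ at the ramified prime $2$ — precisely the identities $\nrd(1+\fm_{2})=\BZ_{2}^{\times}$ and $(1+\fm_{2})\cap\BQ_{2}^{\times}=\BZ_{2}^{\times}$. All of this is carried out in \cite[(B.1), Corollary B.0.2]{Qiufin}.
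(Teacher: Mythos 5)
Your proof is correct. The paper itself gives no argument here — it simply cites \cite[(B.1), Corollary B.0.2]{Qiufin} — and what you have written is the standard proof that the citation encapsulates: strong approximation for $B^{1}$ together with $h_{\BQ}=1$ reduces everything to the two local verifications at the ramified prime $2$, namely $\nrd(1+\fm_{2})=\BZ_{2}^{\times}$ (index divides $3$ but must be a power of $2$) and $(1+\fm_{2})\cap\BQ_{2}^{\times}=1+2\BZ_{2}=\BZ_{2}^{\times}$, both of which you carry out correctly.
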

 By   LMFDB, there is a unique  minimal newform 
of weight 2 and level 52 with trivial central character, labeled as $52.2.a.a$. 
               Denote it by $\phi$. 
Its Jacqeut Langlands correspondence is 
on  $X_{V\aft}$, but not on $X_{U\aft}$.

\begin{proof}[Proof of \Cref{propcyc} (1)] 
  
  By   \cite[Theorem 2 (i)]{Ogg1} and that $13\equiv 1 \ \mod 3,\ 13\equiv 1 \ \mod 4$, the Shimura curve $X_{B^+\cap U}$ has no elliptic points. So (for any $V\subset U$), 
$X_{B^+\cap V}\to X_{B^+\cap U} $ is unramified. 
By \cite[A.3]{Qiufin}, $[U:V]=3$ so that $X_{B^+\cap V}\to X_{B^+\cap U} $ has degree at most 3. 
  By \Cref{lemcyc} and the discussion following it, $X_{B^+\cap V}\to X_{B^+\cap U} $ is nontrivial and the proof is finished,
\end{proof}

To prove (2), we will identify the Weierstrass points as CM points on  $X_{  U} $. Then their preimages in $X_V$ are also CM points. And  we then apply the general Gross--Zagier formula \cite{YZZ} to $X_V$. 
We need some preparations for the proof. 
Let $K=\BQ(\sqrt{-26})$. The class group of $K$ is $\BZ/6\BZ$, as one can find on LMFDB. 
Then $K$ has a unique quadratic unramified extension, which is 
$L = \BQ(\sqrt{-2}, \sqrt{13})$. Let $\chi$ be the unique quadratic   class group character of $K$, corresponding to $L/K$. Then 
the base change $L$-functions of $\phi$ satisifes
\begin{equation}
  \label{Ldecom}
  L(\phi_K,s)L(\phi_K\otimes \chi,s) = L(\phi_L,s).
\end{equation}

\begin{lem}\label{lemcyc1}
   Choose an embedding  $K\incl B$ of $\BQ$-algebras. 
Let $\pi$ be the automorphic representation of $B^\times$ corresponding to the Jacquet--Langlands correspondence of the automorphic representation of $\GL_{2,\BQ}$ generated by $\phi$.
Identify $\chi$ as an idele class character. 
Then their local components at $2,13$ satisfy $$\Hom(\pi_{2}|_{K_{2}^\times},1)\neq  0,\ \Hom (\pi_{13}|_{K_{13}^\times},1)= 0$$
and 
$$\Hom (\pi_{2}|_{K_{2}^\times} \otimes\chi_{2},1)\neq  0,\  \Hom (\pi_{13}|_{K_{13}^\times}\otimes \chi_{13},1)\neq 0.$$

\end{lem}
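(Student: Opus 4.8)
The plan is to derive everything from the Saito--Tunnell local multiplicity--one dichotomy. Since $B$ is division at $2$ and at $13$, the representations $\pi_{2}$ and $\pi_{13}$ are representations of the local quaternion division algebras $B_{2}^{\times}$ and $B_{13}^{\times}$. Write $\sigma_{v}$ for the Jacquet--Langlands transfer of $\pi_{v}$ to $\GL_{2}(\BQ_{v})$ and $\sigma_{v,K_{v}}$ for the base change of $\sigma_{v}$ to $K_{v}$. For a character $\chi_{v}$ of $K_{v}^{\times}$ that is trivial on $\BQ_{v}^{\times}$ (in our application $\chi_{v}$ is either trivial or the local component of $\chi$), Saito--Tunnell says that $\Hom(\pi_{v}|_{K_{v}^{\times}}\otimes\chi_{v},1)$ is at most one--dimensional and is nonzero if and only if
\[
\epsilon\bigl(\tfrac12,\ \sigma_{v,K_{v}}\otimes\chi_{v}\bigr)=-\,\chi_{v}(-1),
\]
the minus sign occurring because $\pi_{v}$ lies on the nonsplit side. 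Thus the lemma is reduced to evaluating four local $\epsilon$--factors.

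Next I would assemble the purely local data, which is elementary. Since $\disc K=-104=-2^{3}\cdot13$, both $K_{2}/\BQ_{2}$ and $K_{13}/\BQ_{13}$ are ramified quadratic. Because $L=\BQ(\sqrt{-2},\sqrt{13})$ is everywhere unramified over $K$ and $\disc\BQ(\sqrt{-2})\cdot\disc\BQ(\sqrt{13})=-8\cdot13=\disc K$, the character $\chi$ is unramified at every finite place, and genus theory --- equivalently the Kronecker symbol identities $\bigl(\tfrac{13}{2}\bigr)=-1$ and $\bigl(\tfrac{-8}{13}\bigr)=-1$ --- shows $L/K$ is inert above $2$ and above $13$; hence $\chi_{2},\chi_{13}$ are the \emph{nontrivial} unramified quadratic characters of $K_{2}^{\times},K_{13}^{\times}$, so in particular $\chi_{2}(-1)=\chi_{13}(-1)=1$. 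From \cite{LMFDB}, $\phi=52.2.a.a$ has trivial nebentypus, conductor $52=2^{2}\cdot13$, and $a_{13}(\phi)=-1$; hence $\sigma_{13}=\St\otimes\xi_{13}$ is the nontrivial unramified quadratic twist of Steinberg and $\pi_{13}=\xi_{13}\circ\nrd$ is one--dimensional. At $v=13$ everything is now explicit: base change carries $\St\otimes\xi_{13}$ to $\St_{K_{13}}\otimes(\xi_{13}\circ N_{K_{13}/\BQ_{13}})$, and since the norm subgroup of a ramified quadratic extension of $\BQ_{13}$ contains an element of odd valuation, $\xi_{13}\circ N_{K_{13}/\BQ_{13}}=\chi_{13}$. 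Using $\epsilon(\tfrac12,\St_{K_{13}}\otimes\mu)=-\mu(\varpi_{K_{13}})$ for unramified $\mu$, we get $\epsilon(\tfrac12,\sigma_{13,K_{13}})=-\chi_{13}(\varpi_{K_{13}})=+1$ and $\epsilon(\tfrac12,\sigma_{13,K_{13}}\otimes\chi_{13})=\epsilon(\tfrac12,\St_{K_{13}})=-1$, so by the dichotomy $\Hom(\pi_{13}|_{K_{13}^{\times}},1)=0$ and $\Hom(\pi_{13}|_{K_{13}^{\times}}\otimes\chi_{13},1)\neq0$.

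For $v=2$ I would not unpack the structure of the supercuspidal $\sigma_{2}$ but instead extract the two remaining $\epsilon$--factors from the global $\epsilon$--factor product formula. Both $\phi_{K}$ and $\phi_{K}\otimes\chi$ are cuspidal base changes ($\phi$ is not CM) with trivial central character; $\phi$ is unramified away from $\{2,13\}$ and $\chi$ is everywhere unramified, so the only bad places of $\phi_{K}$ and $\phi_{K}\otimes\chi$ lie above $\{2,13,\infty\}$. Hence each global root number is the product of the local factor at the complex place (which is $-1$, being that of the weight--two discrete series base--changed to $\BC$), the factor at $13$ just computed, and the factor at $2$. This gives $\epsilon(\tfrac12,\sigma_{2,K_{2}})=-\epsilon(\phi_{K})$ and $\epsilon(\tfrac12,\sigma_{2,K_{2}}\otimes\chi_{2})=\epsilon(\phi_{K}\otimes\chi)$. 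Now $\epsilon(\phi_{K})=\epsilon(\phi)\,\epsilon(\phi\otimes\eta_{-104})$ and $\epsilon(\phi_{K}\otimes\chi)=\epsilon(\phi\otimes\eta_{-8})\,\epsilon(\phi\otimes\eta_{13})$ (the base--change and genus--character factorizations, matching \eqref{Ldecom} via $\eta_{-104}=\eta_{-8}\eta_{13}$, where $\eta_{d}$ denotes the quadratic character of conductor $|d|$), and reading from \cite{LMFDB} that the first product is $+1$ and the second is $-1$ yields $\epsilon(\tfrac12,\sigma_{2,K_{2}})=\epsilon(\tfrac12,\sigma_{2,K_{2}}\otimes\chi_{2})=-1$. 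By the dichotomy, $\Hom(\pi_{2}|_{K_{2}^{\times}},1)\neq0$ and $\Hom(\pi_{2}|_{K_{2}^{\times}}\otimes\chi_{2},1)\neq0$, completing the proof.

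The only genuine work is bookkeeping of normalizations: the sign of the Saito--Tunnell dichotomy on the nonsplit side, the value $-1$ of the archimedean root number of the weight--two discrete series over $\BC$, and the \cite{LMFDB} sign conventions for the twisted forms $\phi\otimes\eta_{-8},\phi\otimes\eta_{13},\phi\otimes\eta_{-104}$, together with the check that $\eta_{-104}=\eta_{-8}\eta_{13}$ is the genus--character factorization entering $L(s,\phi_{K}\otimes\chi)$. As a self--contained cross--check at $v=2$ (bypassing the \cite{LMFDB} root numbers) one can compute directly: $\sigma_{2}$ is, up to an unramified twist, the depth--zero supercuspidal attached to a primitive cubic character $\theta$ of $\BF_{4}^{\times}$, so $\sigma_{2,K_{2}}$ is induced from the unramified quadratic extension $L_{2}=K_{2}\cdot\BQ_{2^{2}}$ of $K_{2}$ by a depth--zero character $\eta$; its root number is $\lambda(L_{2}/K_{2})\cdot\epsilon(\tfrac12,\eta)=(-1)\cdot\bigl(g(\theta)/2\bigr)=-1$, using the Gauss--sum evaluation $g(\theta)=1-(\zeta_{3}+\zeta_{3}^{-1})=2$, and twisting by $\chi_{2}$, which cuts out $L_{2}/K_{2}$, leaves $\sigma_{2,K_{2}}$ unchanged, so both $2$--adic factors equal $-1$, as found above.
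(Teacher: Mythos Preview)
Your proof is correct but proceeds by a genuinely different route from the paper.  The paper argues \emph{directly} on the quaternion side with no $\epsilon$--factors at all: at $p=2$ it uses that $B_2^\times/\BQ_2^\times(1+\fm_2)\cong D_6$, identifies $\pi_2$ with the unique $2$--dimensional irreducible of $D_6$, observes that the image of the ramified torus $K_2^\times$ is a subgroup of order $2$, and reads off from the character table that the restriction decomposes as the sum of both characters of $\BZ/2$; at $p=13$ it uses $B_{13}^\times/\BQ_{13}^\times O_{13}^\times\cong\BZ/2$, identifies $\pi_{13}$ as the character $-1$ via the Atkin--Lehner sign $w_{13}=+1$ (the single LMFDB input), and matches it against the images of $1$ and $\chi_{13}$.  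You instead invert the logic: you invoke the Tunnell--Saito dichotomy and reduce to computing the four local $\epsilon$--factors, doing $v=13$ by hand (twisted Steinberg base--changed through a ramified quadratic) and $v=2$ by the global product formula together with several LMFDB root numbers of quadratic twists of $\phi$, with an independent cross--check via the depth--zero supercuspidal description and the Gauss sum $g(\theta)=2$ over $\BF_4$.

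What each buys: the paper's argument is shorter and more self--contained (one Atkin--Lehner sign from LMFDB, no $\epsilon$--factor formalism), and it is logically upstream of the root--number discussion that follows the lemma in the paper --- indeed the paper \emph{deduces} the local and global root numbers of $\phi_K$ and $\phi_K\otimes\chi$ from this lemma via Tunnell--Saito, the exact reverse of your main step at $v=2$.  Your argument is more systematic and transportable to situations where the finite quotients of $B_p^\times$ are less transparent, and your direct cross--check at $v=2$ (automorphic induction invariance under the twist by $\chi_2$, since $\chi_2\circ N_{L_2/K_2}=1$) is a nice normalization--free confirmation that avoids the LMFDB lookups.  One small caution: if you keep the global--root--number route at $v=2$, be explicit about which twisted newforms' signs you are reading off, since otherwise the argument looks close to circular with the paragraph immediately following the lemma in the paper.
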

 \begin{proof}
 
  Let $D_6$ be the dihedral group of order 6, i.e., the symmetric group of order 6. 
Then at $p=2$, $B_p^\times/\BQ_p^\times(1+\fm_p)\cong D_6$. Since $K_p$ is ramified over $\BQ_p$, the image of $K_p^\times$ is the unique subgroup of order 2 in $D_p$ up to conjugation. See, for example, \cite[A.3, Lemma A.3.1]{Qiufin}. 
The   unique 2-dimensional representation of $D_6$ is isomorphic to $\pi_p$. See for example \cite[Lemma A.1.4]{Qiufin}.  From its character table, easy to see that the restriction of $\pi_p$ to this unique subgroup of order 2 in $D_6$ is the direct sum of its two characters. Thus we have the formulas at $p=2$.
  
At $p=13$,$B_p^\times/\BQ_p^\times O_p^\times\cong \BZ/2\BZ$. 
Since $K_p$ is ramified over $\BQ_p$, the image of $K_p^\times$ is  $\BZ/2\BZ$ generated by the uniformizer of $K_p$. Since $L_p/K_p$ is inert, $\chi_p$ is the unramified character $-1$ of  $K_p^\times$, i.e.,  the character $-1$ of  $\BZ/2\BZ$.
Since the Atkin--Lehner sign of $\phi$ is $+1$ from LMFDB,  
  $\pi_{p}$, as   a character  of  $\BZ/2\BZ$,  is  the opposite $-1$.  
  Thus we have the formulas at $p=13$.

 \end{proof}

 By \Cref{lemcyc1} (1) and the root number part of the Tunnell--Saito Theorem \cite{Sai,Tun}, the local root number
 of $L(\phi_K,s)$ (resp. $L(\phi_K\otimes \chi,s)$) at 2, 13 are $-1,1$ (resp., $-1,-1$).
Since both $L(\phi_K,s)$ and $L(\phi_K\otimes \chi,s)$ have local root number $-1$ at infinity, 
their global root numbers are $1,-1$ respectively.  
In particular, $L'(\phi_K,1)=0$ and $L(\phi_K\otimes \chi,1)=0$. From $L(\phi_K\otimes \chi,1)=0$, by \eqref{Ldecom}, we have  $L(\phi_L,1)=0$.
Moreover, using Magma \cite{Magma}(with its feature  ``Leading:=true" for computing derivative), we compute
$$L(\phi_K,1) = 1.85863978993643183457996916142..., 
$$
and 
$$
L'(\phi_L,1)=5.67155941014956154756688048510....
$$
Then we conclude that 
\begin{equation}\label{Lneq0}
  L(\phi_K\otimes \chi,1)=\frac{L'(\phi_L,1)}{L(\phi_K,1)}\neq 0.
\end{equation}

 \begin{proof}[Proof of \Cref{propcyc} (1)] 
From the proof of  \cite[Theorem 7]{Ogg1}, we found that Weierstrass points  on  $X_{B^+\cap U} = X_{U,\BC}$ are given by the CM points whose endomorphism algebras are the  ring of integers  $O_K=\BZ[\sqrt{-26}]$
   of $K$. A point $x\in X_{V,\BC}$ in the preimage of a Weierstrass point is still a CM point. 
   And the endomorphism algebra of $x$ is an order $O'$ in $K$ such that $O'_p =O_p$ for 
   $p\neq 2,13$. 
   We claim that its $\chi$-twisted sum over the Galois orbit of $x$, as defined in \cite[1.3.1]{YZZ}, is nonzero in $\Ch^1(X_U)$. Then $x$ is not $\BQ$-subcanonical. 
   But this claim follows by applying the general Gross--Zagier formula \cite[Theorem 1.2]{YZZ}. 
   Besides \eqref{Lneq0}, we need the local conditions in loc. cit..  At $p=2$ and $13$, since $\pi_p^{V_p}=\pi_p$, they   hold by \Cref{lemcyc1}. They also hold at $p\neq 2,13$ by \cite[PROPOSITION 6.4]{GroLoc}.
\end{proof}

 \begin{rmk}
  Same method can potentially  be used to study $\BQ$-subcanonicality of unramified covering of hyperellipic curves by other groups, as long as there is such a cover of Shimura curves.  
 \end{rmk}

\end{document}